\newtheorem{theorem}{Theorem}[section]
\newtheorem{remark}{Remark}[section]
\def \ve{\varepsilon}
\begin{document}



%
%

\title{Hopf Bifurcation in a Gene Regulatory Network Model: \\ Molecular Movement Causes Oscillations }

\author{{Mark Chaplain$\,^{\mathrm{a}}$,  Mariya Ptashnyk$\,^{\mathrm{a}}$ and Marc Sturrock$\,^{\mathrm{b}}$}\medskip\\
\small $^{\mathrm{a}}$ 
Division of Mathematics, 
University of Dundee, 
Dundee DD1 4HN, 
Scotland, UK
\\
\small $^{\mathrm{b}}$Mathematical Biosciences Institute,
Ohio State University, 
377 Jennings Hall,
\\
\small 1735 Neil Avenue, 
Columbus, OH,
USA}

\maketitle

\begin{abstract}
Gene regulatory networks, i.e. DNA segments in a cell which interact with each other indirectly through their RNA and protein products, lie at the heart of many important intracellular signal transduction processes. In this paper we analyse a mathematical model of a canonical gene regulatory network consisting of a single negative feedback loop between a protein and its mRNA (e.g. the Hes1 transcription factor system). The model consists of two partial differential equations describing the spatio-temporal interactions between the protein and its mRNA in a 1-dimensional domain. Such intracellular negative feedback systems are known to exhibit oscillatory behaviour and this is the case for our model, shown initially via computational simulations. In order to investigate this behaviour more deeply, we next solve our system using Green's functions and then undertake a linearized stability analysis of the steady states of the model. Our results show that the diffusion coefficient of the protein/mRNA acts as a bifurcation parameter and gives rise to a Hopf bifurcation. This shows that the spatial movement of the mRNA and protein molecules alone is sufficient to cause the oscillations. This has implications for transcription factors such as p53, NF-$\kappa$B and heat shock proteins which are involved in regulating important cellular processes such as inflammation, meiosis, apoptosis and the heat shock response, and are linked to diseases such as arthritis and cancer. 
\end{abstract}



\section{Introduction}

A gene regulatory network (GRN) can be defined as a collection of DNA segments in a cell which interact with each other indirectly through their RNA and protein products. GRNs lie at the heart of intracellular signal transduction and indirectly control many important cellular functions. A key component of GRNs is a class of proteins called transcription factors. In response to various biological signals, transcription factors change the transcription rate of genes, allowing cells to produce the proteins they need at the appropriate times and in the appropriate quantities. It is now well stablished that GRNs contain a small set of recurring regulation patterns, commonly referred to as network motifs \cite{Milo}, which can be thought of as recurring circuits of interactions from which complex GRNs are built. A GRN is said to contain a negative feedback loop if a gene product inhibits its own production either directly or indirectly. Negative feedback loops are commonly found in diverse biological processes including inflammation, meiosis, apoptosis and the heat shock response \cite{Alberts,Tyson,Lahav}, and are known to exhibit oscillations in mRNA and protein levels \cite{Zatorsky2010,Kobayashi2011,Nelson}.

Mathematical modelling of GRNs goes back to the work of Goodwin \cite{Goodwin}, where an initial system of two ordinary differential equations (ODEs) was used to model a self-repressing gene. In the final part of the paper a system of 3 ODEs was shown to produce limit cycle behaviour. This work was continued by Griffith \cite{Griffithnf} who demonstrated that the introduction of the third species was necessary for the oscillatory dynamics. An analysis of theoretical chemical systems whereby two chemicals produced at distinct spatial locations (heterogeneous catalysis) diffused and reacted together was carried out by Glass and co-workers \cite{glasskauffman,Shymko}. Their results showed that the number and stability of the steady states of the system changed depending on the distance between the two catalytic sites. The authors concluded that {\it ``These examples indicate that geometrical considerations must be explicitly considered when analyzing the dynamics of highly structured (e.g., biological) systems.''} \cite{Shymko}. Mahaffy and co-workers \cite{Mahaffy1985,Mahaffy1988,Mahaffy1984} developed this work further by considering an explicitly spatial model and also time delays accounting for the processes of transcription (production of mRNA) and translation (production of proteins). Tiana et al. \cite{Tiana} proposed that introducing delays to ODE models of negative feedback loops could produce sustained oscillatory dynamics and Jensen et al. \cite{Jensen} found that the invocation of an unknown third species (as Griffith had done \cite{Griffithnf}) could be avoided by the introduction of delay terms to a model of the Hes1 GRN (the justification being to account for the processes of transcription and translation). The Hes1 system is a simple example of a GRN which possesses a single negative feedback loop and benefits from having been the subject of numerous biological experiments \cite{Hirata,Kageyama,Kobayashi2010,Kobayashi2011,Kobayashi2009}. A delay differential equation (DDE) model of the Hes1 GRN has also been studied by Monk and co-workers \cite{Monk2008,Monk2003}. More recently a 2-dimensional spatio-temporal model of the Hes1 GRN considering diffusion of the protein and mRNA was developed by Sturrock et al. \cite{Sturrock2011} and then later extended to account for directed transport via microtublues \cite{Sturrock2012}. 

A key feature of all mathematical models of the Hes1 GRN (and other negative feedback systems) is the existence of oscillatory solutions characterised by a Hopf bifurcation. In the Hopf, or Poincar\'e-Andronov-Hopf bifurcation (first described by Hopf \cite{Hopf}), a steady state changes stability as two complex conjugate eigenvalues of the linearization cross the imaginary axis and a family of periodic orbits bifurcates from the steady state. Many studies are devoted to the existence and stability of Hopf bifurcations in ordinary and partial differential equations \cite{Crandall,Hassard,Kielhoefer,Kielhoefer_paper,Kuznetsov,Marsden}. The question of the existence of global Hopf bifurcation for nonlinear parabolic equations has also been considered \cite{Fiedler2,Fiedler,Ize}. There are many results concerning the stability of constant (i.e. spatially homogeneous) steady states and the existence of periodic solutions bifurcating from such constant steady states. There are some results on the stability of spike-solutions and the existence of Hopf bifurcations in the shadow Gierer-Meinhardt model \cite{Dancer2,Ni,Ward,Ward2}, as well as on the stability of spiky solutions in a reaction-diffusion system with four morphogens \cite{Wei2} and of cluster solutions for large reaction-diffusion systems \cite{Wei}. In the analysis of  the stability and Hopf bifurcations in systems with spike-solutions as stationary solutions, the properties of the corresponding nonlocal eigenvalue problem were used. Perturbation theory has been applied to analyse the stability of non-constant steady-states for a system of nonlinear reaction-diffusion equations coupled with ordinary differential equations \cite{Golovaty}. In considering the relation between the spectrum of a linearised operator for singularly perturbed predator-prey-type equations with diffusion and the limit operator as the perturbation parameter tends to zero, Dancer \cite{Dancer} analysed the stability of strictly positive stationary solutions and the existence of Hopf bifurcations.

In this paper we analyse a mathematical model of the Hes1 transcription factor - a canonical GRN consisting of a single negative feedback loop between the Hes1 protein and its mRNA. The format of this paper is as follows. In the next section we present our mathematical model derived from that first formulated by Sturrock et al. \cite{Sturrock2011}. First we demonstrate the existence of oscillatory solutions numerically, indicating the existence of Hopf bifurcations. Next, applying linearised stability analysis, we study the stability of a (spatially inhomogeneous) steady state of the model and prove the existence of a Hopf bifurcation. The main difficulty of the analysis is that the steady state of the model is not constant. In a similar manner to Dancer \cite{Dancer} we show the existence of a Hopf bifurcation by considering a limit problem associated with the original model. The method of collective compactness \cite{Anselone,Dancer} is applied to relate the spectrum of the limit operator to the spectrum of the original operator. To show the stability of periodic solutions and to determine the type of Hopf bifurcation, we use a weakly nonlinear analysis, see, for example, Matkowski \cite{Matkowski}, and normal form theory, see, for example, Hassard, 
Haragus \cite{Haragus,Hassard}. The techniques of weakly nonlinear analysis \cite{Eftimie,Maini,Lewis,Rodrigues} and normal form theory \cite{Haragus,Hassard}, are widely used to study the nonlinear behaviour of solutions near bifurcation points. 

\section{The Mathematical Model of the Hes1 Gene Regulatory Network} 

The basic model of a self-repressing gene \cite{Monk2003} describing the temporal dynamics of hes1 mRNA concentration, $m(t)$, and Hes1 protein concentration, $p(t)$, takes the general form:
\begin{eqnarray}\label{hes1m}
\frac{\partial m }{\partial t} & = & \alpha_m f(p) - \mu_m m , \\
\frac{\partial p}{\partial t}  & = & \alpha_p m - \mu_p p ,
\label{hes1p}
\end{eqnarray} 
for positive constants $\alpha_m , \alpha_p , \mu_m , \mu_p$ and some function $f(p)$ modelling the suppression of mRNA production by the protein. It can be shown using Bendixson's Negative Criterion (cf. Verhulst \cite{Verhulst}, Theorem 4.1) that, irrespective of the function $f(p)$ (e.g. a Hill function), there are no periodic solutions of the above system. In order to account for the experimentally observed oscillations in both mRNA and protein concentration levels \cite{Hirata}, a discrete delay has often been introduced into such models being justified as taking into account the time taken to produce mRNA (transcription) and produce protein (translation) \cite{Monk2003}. Applying a discrete delay $\tau$ to (\ref{hes1m}), (\ref{hes1p}), a delay differential equation model is obtained of the form:
\begin{eqnarray}
\frac{\partial m }{\partial t} & = & \alpha_m f(p - \tau) - \mu_m m , \\
\frac{\partial p}{\partial t}  & = & \alpha_p m - \mu_p p .
\end{eqnarray}
Such a system is observed to exhibit oscillations for a suitable value of the delay parameter $\tau$ representing the sum of the transcriptional and translational time delays. This delay differential equation approach has also been used to model other feedback systems involving transcription factors such as p53 \cite{Bar-Or,Zatorsky,Tiana} and NF-$\kappa$B \cite{Nelson}. Other papers have used a distributed delay to model this effect\cite{Gordon}, which in fact is equivalent to the original three ODE model of a self-repressing gene proposed by Goodwin \cite{Goodwin} and Griffith \cite{Griffithnf}. 

Here we study an explicitly spatial model of the Hes1 GRN originally formulated by Sturrock et al. \cite{Sturrock2011,Sturrock2012} and investigate the role that spatial movement of the molecules may play in causing the oscillations in concentration levels. The model consists of a system of coupled nonlinear partial differential equations describing the temporal and spatial dynamics of the concentration of hes1 mRNA, $m(x,t)$, and Hes1 protein, $p(x,t)$, and accounts for the processes of transcription (mRNA production) and translation (protein production). Transcription is assumed to occur in a small region of the domain representing the gene site. Both mRNA and protein also diffuse and undergo linear decay. The  non-dimensionalised model is given as:
\begin{align}\label{dynamic_problem}
\begin{aligned}
&\frac{\partial m }{\partial t}  = D \frac{\partial^2 m }{\partial x^2} + \alpha_m \, f(p )\delta^\ve_{x_M}(x) - \mu \,  m  \quad & \text{ in } (0,T)\times (0,1), \\
&\frac{\partial p}{\partial t}  = D \frac{\partial^2 p }{\partial x^2}+ \alpha_p \, g(x) \, m - \mu\,  p  \quad & \text{ in } (0,T)\times (0,1), \\
&\frac{\partial m(t,0)}{\partial x}  =\frac{\partial m(t,1)}{\partial x}  =0, \quad \frac{\partial p(t,0)}{\partial x} = \frac{\partial p(t,1)}{\partial x} =0 \quad & \text{ in } (0,T),\\
& m(0,x)= m_0(x) , \quad p(0,x) = p_0(x) \quad & \text{ in } (0,1),
\end{aligned}
\end{align} 
where $D$, $\alpha_m$, $\alpha_p$ and $\mu$ are positive constants (the diffusion coefficient, transcription rate, translation rate and decay rate respectively). Here $l$ denotes the position of the nuclear membrane and therefore the domain is partitioned into two distinct regions, $(0,l)$ the cell nucleus and $(l,1)$ the cell cytoplasm, for some $l\in (0,1)$. The point $x_M \in (0, l)$ is the position of the centre of the gene site and by $\delta^\ve_{x_M}$ we denote the Dirac approximation of the $\delta$-distribution located at $x_M$, with $\ve>0$ a small parameter and $\delta^\ve_{x_M}$ has compact support.    

The nonlinear reaction term $f: \mathbb R \to \mathbb R$ is a Hill function  $f(p) = {1}/{(1+p^h)}$, with $h \geq 1$, modelling the suppression of mRNA production by the protein (negative feedback). The function $g$ is a step function  given by 
$$
g(x) = \left\{ \begin {array}{ll} 0, & \mbox{if}\,\, x < l \; , \\
 \\
1, & \mbox{if}\,\, x  \geq l\; ,\\
\end{array} \right.
$$
since the process of translation only occurs in the cytoplasm. 
A schematic diagram of the domain is given in Figure \ref{fig1a}.

\begin{figure}[h!]
\centering
\includegraphics[scale=1.0]{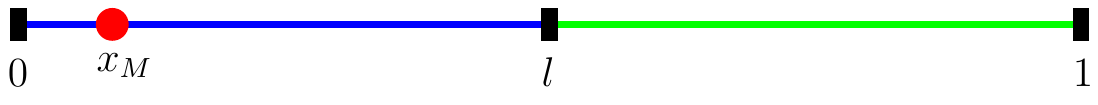}               
\caption{Schematic diagram of the 1-dimensional spatial domain for the system (\ref{dynamic_problem}), showing the spatial location of the gene site $x_M$ and the nuclear membrane $l$. The cell nucleus is shown in blue, while the cell cytoplasm is shown in green.}
\label{fig1a}
\end{figure}

First we demonstrate existence and uniqueness of solutions to  \eqref{dynamic_problem}. 
 
\begin{theorem}
For $\ve>0$ and nonnegative  initial data $m_0, p_0  \in H^2(0,1)$,   there exists  a unique nonnegative global solution  $m,p\in C([0, \infty); H^2(0,1))$, $\partial_t m, \partial_t p \in L^2((0,T)\times(0,1))$,  and $m,p \in C^{(\gamma+1)/2, \gamma +1}([0, T]\times [0,1])$, for some $\gamma>0$ and any $T>0$,  of the problem \eqref{dynamic_problem} satisfying 
\begin{eqnarray}\label{a_priori_estim}
\begin{aligned}
\|m\|_{L^\infty(0,T; H^1(0,1))}+   \|p\|_{L^\infty (0,T; H^1(0,1))}  \leq C, \\ 
 \|\partial_t m\|_{L^2((0,T)\times (0,1))} + \|\partial_t p\|_{L^2(0,T; H^1(0,1))}   +
  \|\partial_x^2 p\|_{L^2((0,T)\times(0,1))} \leq C, 
\end{aligned}
\end{eqnarray}
for any $T\in (0, \infty)$ with the constant $C$ independent of $\ve$. 
\end{theorem}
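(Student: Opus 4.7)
The plan is to combine a mild-formulation fixed-point argument for local existence with a hierarchy of energy estimates whose key uniform-in-$\varepsilon$ step rests on a singular-part decomposition of $m$. Letting $S(t)=e^{t(D\partial_x^2-\mu)}$ denote the Neumann heat semigroup on $L^2(0,1)$, I would rewrite \eqref{dynamic_problem} as
\begin{align*}
m(t)&=S(t)m_0+\alpha_m\int_0^t S(t-s)\bigl(f(p(s))\,\delta^\varepsilon_{x_M}\bigr)ds,\\
p(t)&=S(t)p_0+\alpha_p\int_0^t S(t-s)\bigl(g\,m(s)\bigr)ds,
\end{align*}
and run the Banach fixed-point theorem in $C([0,T_0];L^2(0,1))^2$ for $T_0$ small, exploiting that $f$ is bounded and Lipschitz, $g\in L^\infty$, and $\delta^\varepsilon_{x_M}\in L^\infty$ for each fixed $\varepsilon>0$. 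Nonnegativity follows by testing the $m$-equation with $m_-=\min(m,0)$: because $f(p),\delta^\varepsilon_{x_M}\geq 0$ and $m_0\geq 0$, a Gronwall argument forces $m_-\equiv 0$, and the same reasoning applied to $p$ with $m\geq 0$ yields $p\geq 0$.

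The main task is to prove \eqref{a_priori_estim} with $C$ independent of $\varepsilon$, the essential obstruction being $\|\delta^\varepsilon_{x_M}\|_{L^2}\to\infty$ as $\varepsilon\to 0$. First I would test the $m$-equation with $m$; using $f\leq 1$ and the one-dimensional embedding $H^1(0,1)\hookrightarrow L^\infty(0,1)$,
\[
\alpha_m\int_0^1 f(p)\,\delta^\varepsilon_{x_M}\,m\,dx\leq \alpha_m\|m\|_{L^\infty}\|\delta^\varepsilon_{x_M}\|_{L^1}\leq C\|m\|_{H^1},
\]
which absorbs into the dissipation after Young's inequality and yields uniform control of $m$ in $L^\infty(0,T;L^2)\cap L^2(0,T;H^1)$. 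Testing the $p$-equation with $p$ and with $-\partial_x^2 p$ (boundary terms vanish by Neumann conditions) then delivers $p\in L^\infty(0,T;H^1)\cap L^2(0,T;H^2)$ and $\partial_t p\in L^2((0,T)\times(0,1))$, again uniformly.

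The crucial step is the uniform $L^\infty(0,T;H^1)$ estimate on $m$ and the $L^2$ estimate on $\partial_t m$, for which I would isolate the $\delta^\varepsilon$-singularity. Let $v_\varepsilon\in H^1(0,1)$ solve the stationary problem $-D\partial_x^2 v_\varepsilon+\mu v_\varepsilon=\alpha_m\delta^\varepsilon_{x_M}$ with Neumann boundary conditions; since $\|\delta^\varepsilon_{x_M}\|_{H^{-1}(0,1)}\leq C$ uniformly in $\varepsilon$ (via the embedding $H^1\hookrightarrow L^\infty$ in 1D), the elliptic estimate gives $\|v_\varepsilon\|_{H^1}\leq C$ uniformly. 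Setting $m=f(p)v_\varepsilon+r$ and invoking the identity for $v_\varepsilon$ cancels the singular right-hand side, leaving
\[
\partial_t r-D\partial_x^2 r+\mu r=Df'(p)(\partial_x^2 p)v_\varepsilon+Df''(p)(\partial_x p)^2 v_\varepsilon+2Df'(p)\partial_x p\,\partial_x v_\varepsilon-f'(p)(\partial_t p)v_\varepsilon
\]
with Neumann conditions and initial datum $r(0)=m_0-f(p_0)v_\varepsilon\in H^1$ uniformly, since $H^1(0,1)$ is a Banach algebra. Using $v_\varepsilon\in L^\infty$, $\partial_x v_\varepsilon\in L^2$, and the previously established bounds on $p$ (in particular $\partial_x p\in L^2(0,T;L^\infty)$ via the $L^2(H^2)\cap L^\infty(H^1)$ control), each term on the right lies in $L^2((0,T)\times(0,1))$ uniformly in $\varepsilon$. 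Standard parabolic energy theory then yields $r\in L^\infty(0,T;H^1)\cap L^2(0,T;H^2)$ and $\partial_t r\in L^2$, whence $m\in L^\infty(0,T;H^1)$ and $\partial_t m=f'(p)(\partial_t p)v_\varepsilon+\partial_t r\in L^2$ uniformly. Differentiating the $p$-equation in time and testing with $\partial_t p$, whose initial trace lies in $L^2$ because $p_0\in H^2$ and $m_0\in L^2$, finally promotes $\partial_t p$ to $L^2(0,T;H^1)$ and, reading off the equation, $\partial_x^2 p$ to $L^2((0,T)\times(0,1))$, both uniformly.

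Global existence follows by continuation from the uniform-in-$T$ bounds. For each fixed $\varepsilon$ the H\"older regularity $m,p\in C^{(\gamma+1)/2,\gamma+1}([0,T]\times[0,1])$ comes from classical Schauder/Ladyzhenskaya estimates, since the right-hand sides $f(p)\delta^\varepsilon_{x_M}$ and $g\,m$ are in $L^\infty$ for fixed $\varepsilon$. The hardest part of the argument is the uniformity in $\varepsilon$: the decomposition $m=f(p)v_\varepsilon+r$ is the central device, trading the unbounded $L^2$-norm of $\delta^\varepsilon_{x_M}$ against the uniformly bounded $H^1$-norm of its elliptic extension.
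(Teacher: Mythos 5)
Your proposal is correct, and it reaches the uniform-in-$\ve$ estimates by a genuinely different device than the paper. The paper handles the problematic term $\alpha_m\int_0^\tau\!\!\int_0^1 f(p)\,\delta^\ve_{x_M}\,\partial_t m\,dx\,dt$ head-on: it tests the $m$-equation with $\partial_t m$ and (implicitly, via integration by parts in time) trades the time derivative onto $f(p)$, which is why its first energy inequality carries the terms $\delta\|m(\tau)\|^2_{L^\infty(0,1)}$, $\|m\|^2_{L^2(0,\tau;L^\infty(0,1))}$ and $\|\partial_t p\|^2_{L^2(0,\tau;L^\infty(0,1))}$ on the right, all controlled through $\|\delta^\ve_{x_M}\|_{L^1}=1$ and $H^1(0,1)\hookrightarrow C([0,1])$; this forces a coupled system of four estimates (including the time-differentiated $p$-equation) that must be closed simultaneously by absorbing small multiples of $\|\partial_t m\|^2_{L^2}$ and $\|m(\tau)\|^2_{L^\infty}$. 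You instead remove the singularity once and for all with the elliptic lifting $m=f(p)v_\ve+r$, where $-Dv_\ve''+\mu v_\ve=\alpha_m\delta^\ve_{x_M}$ and $\|v_\ve\|_{H^1}\leq C\|\delta^\ve_{x_M}\|_{H^{-1}}\leq C$ uniformly; the residual equation for $r$ has an $L^2_{t,x}$ right-hand side (the needed $\partial_x p\in L^2(0,T;L^\infty)$ and $\partial_x^2 p,\partial_t p\in L^2_{t,x}$ come from the $p$-equation using only the already-established $L^\infty(0,T;L^2)$ bound on $m$, so there is no circularity), and standard energy theory for $r$ decouples the argument. Your route costs an extra computation (the $r$-equation and the product estimates for $(\partial_x p)^2 v_\ve$ and $\partial_x p\,\partial_x v_\ve$) but is arguably cleaner and more robust, since the singular profile is frozen into a time-independent function with a uniform $H^1$ bound. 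The remaining ingredients differ only cosmetically: the paper gets nonnegativity and ($\ve$-dependent) $L^\infty$ bounds from invariant regions, hence global existence, while you use negative-part testing plus continuation from the a priori bounds; both then invoke standard parabolic regularity for the H\"older statement at fixed $\ve$. One small point worth making explicit in your write-up: the negative-part argument for $m$ uses $f(p)\geq 0$, which requires either knowing $p>-1$ a priori or working with a suitably extended $f$ and verifying nonnegativity a posteriori — the same technicality the paper absorbs into its invariant-region step.
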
 

\begin{proof}
Since $f(p)$ is Lipschitz continuous for  $p\geq -\theta $, with some $0<\theta <1$, we have  that for nonnegative initial data $m_0$, $p_0$ the existence and uniqueness of a solution of the problem  \eqref{dynamic_problem}  in $(0, T_0)\times (0,1)$, for some $T_0 >0$,  follows directly from the existence and the regularity theory for systems of parabolic equations, see e.g. Henry \cite{Henry}, Lieberman \cite{Lieberman}. 
Using the definition  of the Dirac sequence, for 
$
F_m(m,p) = \alpha_m \, f(p )\delta^\ve_{x_M}(x) - \mu \,  m 
$
and 
$F_p(m,p) =\alpha_p \, g(x) \, m - \mu\,  p,$
we have 
\begin{eqnarray*}
F_m |_{m=0} \geq 0 \quad \text{ for } p \geq 0, \quad &&
F_p |_{p=0} \geq 0 \quad  \hspace{1.5 cm }  \text{ for } m \geq 0, \\
F_m |_{m=\alpha_m/(\mu \ve)} \leq 0 \quad \text{ for } p \geq 0, \quad &&
F_p |_{p=\alpha_m\alpha_p/(\mu^2 \ve)} \leq 0 \quad \text{ for } m\leq \alpha_m/(\mu \ve) \; .
\end{eqnarray*}
Thus applying the theorem of invariant regions, e.g. Theorem 14.7 in Smoller \cite{Smoller}, with $G_1(m,p) = - m$, $G_2(m,p) = - p$,  
$G_3(m,p) = m-\alpha_m/(\mu \ve)$, and $G_4(m,p) = p-\alpha_m\alpha_p/(\mu^2 \ve)$, we conclude that $0 \leq m(t,x)\leq \alpha_m/(\mu \ve)$ and $0 \leq p(t,x) \leq \alpha_m\alpha_p/(\mu^2 \ve)$ for all $(t,x) \in (0, T_0)\times(0,1)$, whereas the bounds for $m$ and $p$ are uniform in $T_0$.  This ensures global existence and uniqueness of a bounded solution of  \eqref{dynamic_problem}  for fixed $\ve$. 

Using the property of the Dirac sequence, i.e. $\|\delta^\ve_{x_M} \|_{L^1(0,1)} =1$,  continuous embedding of  $H^1(0,1)$ in $C([0,1])$,  and considering  $m$ and $p$ as test functions for \eqref{dynamic_problem} we obtain
\begin{equation*}\label{apriori_2}
\begin{aligned}
\partial_t \|m(t)\|^2_{L^2(0,1)} + \|\partial_x m(t)\|^2_{L^2(0,1)} + \| m(t) \|^2_{L^2(0,1)} &\leq C \|f(p) \|^2_{L^\infty((0,T)\times (0,1))}\; , \\
\partial_t \|p(t)\|^2_{L^2(0,1)} + \|\partial_x p(t)\|^2_{L^2(0,1)} + \| p(t) \|^2_{L^2(0,1)} &\leq C \| m (t)\|^2_{L^2(0,1)} \; .
\end{aligned}
\end{equation*} 
Integrating  over time and using the uniform boundedness of $f(p)$ for nonnegative $p$ ensure the estimates in  $L^\infty(0,T; L^2(0,1))$ and  $L^2(0,T; H^1(0,1))$. 

Testing  the first equation in \eqref{dynamic_problem} with  $\partial_t m$   and  the second equation with
$\partial_t p$ and $\partial^2_x p$, as well as differentiating the second equation with respect to $t$ and testing with $\partial_t p$,  and integrating over $(0, \tau)$  for  $\tau \in (0,T)$ and any  $T>0$ imply
\begin{eqnarray*}
&& \|\partial_t m\|^2_{L^2((0,\tau)\times(0,1))} +  \|\partial_x m(\tau)\|^2_{L^2(0,1)} + \| m(\tau) \|^2_{L^2(0,1)}
 \leq \delta \|m(\tau) \|^2_{L^\infty(0,1)}\\
&& \hspace{2 cm }  + C\left[\|m(0)\|^2_{H^1(0,1)}+ \|m \|^2_{L^2(0,\tau;L^\infty(0,1))} 
  + \|\partial_t p \|^2_{L^2(0,\tau;L^\infty(0,1))} + C_\delta\right] , \\
&& \|\partial_t p\|^2_{L^2((0,\tau)\times(0,1))} +  \|\partial_x p(\tau)\|^2_{L^2(0,1)} \leq C  \left[\| m \|^2_{L^2((0,\tau)\times(0,1))}+ \|p(0)\|^2_{H^1(0,1)}\right] \; , \\
&&   \|\partial_x p(\tau)\|^2_{L^2(0,1)} + \|\partial_x^2 p\|^2_{L^2((0,\tau)\times(0,1))}  \leq C
 \left[ \|m \|^2_{L^2((0,\tau)\times(0,1))}+ \|p(0)\|^2_{H^1(0,1)}\right] \; , \\
&&  \|\partial_t p(\tau)\|^2_{L^2(0,1)} + \|\partial_x \partial_t p\|^2_{L^2((0,\tau)\times(0,1))}  \leq  \delta \|\partial_t  m \|^2_{L^2((0,\tau)\times(0,1))} \\
&& \hspace{5.7 cm } +  C_\delta\left[\|\partial_t p\|^2_{L^2((0,\tau)\times(0,1))} + \|\partial_t p(0)\|^2_{L^2(0,1)}\right]\; .
\end{eqnarray*} 
This together with  the continuous embedding of $H^1(0,1)$ in $C([0,1])$, the estimate 
$\|\partial_t p(0)\|_{L^2(0,1)}\leq C \|p(0)\|_{H^2(0,1)}$, regularity of initial data and estimates in   $L^\infty(0,T; L^2(0,1))$ and  $L^2(0,T; H^1(0,1))$ shown above ensures  estimates  \eqref{a_priori_estim}.
 \end{proof}
 
\begin{remark} The {\it a priori} estimates   \eqref{a_priori_estim} imply the uniform in $\ve$ boundedness of solutions  of  \eqref{dynamic_problem} for every $T >0$. 
   \end{remark}

For the qualitative analysis of  \eqref{a_priori_estim} we consider the following parameter values in the model  equations:
the basal transcription rate of hes1 mRNA is given by $\alpha_m=1$, the translation rate of Hes1 protein is $\alpha_p =2$, the Hill coefficient in the function $f$ is taken to be $h=5$, and the degradation rate of hes1 mRNA/Hes1 protein $\mu =0.03$. It is assumed that the region of the cytoplasm where the protein is produced is given by $(1/2, 1)$, i.e. $l=1/2$, and the position of the centre of the gene site is at $x_M=0.1$. The diffusion coefficient is a variable parameter in the model and we consider a range of (non-dimensional) diffusion coefficients $D \in [d_1, d_2]$, where $d_1 =10^{-7}$ and $d_2=0.1$,  arising from a corresponding range of biologically relevant dimensional 
values \cite{Matsuda,Mendez,Seksek}. 

Numerical simulations of the model \eqref{dynamic_problem} (using the forward Euler scheme in time and a centred difference scheme in space, as well as 
the Dirac sequence in the form $\delta^\ve_{x_M}(x)=\frac 1{2\ve}(1+\cos(\pi(x-x_M)/\ve))$ for $|x-x_M|<\ve$ and $\delta^\ve_{x_M}(x) = 0$ for $|x-x_M|\geq \ve$) 
reveal that a stationary solution, stable for small values of the diffusion coefficient $D$, becomes unstable for $D\geq D_{1,\ve}^c$, with  $D_{1,\ve}^c\approx 3.117 \times 10^{-4}$, and  again stable for $D> D^c_{2,\ve}$, where $D^c_{2, \ve}\approx 7.885 \times  10^{-3}$. For diffusion coefficients between the two critical values, i.e. $D \in [D_{1, \ve}^c, D_{2,\ve}^c]$,  numerical simulations show the existence of stable periodic solutions of the model  \eqref{dynamic_problem}. These scenarios are shown in Figs.~\ref{fig1}-\ref{fig4}.

  \begin{figure}[h!]
  \centering
  \includegraphics[width=0.4\textwidth]{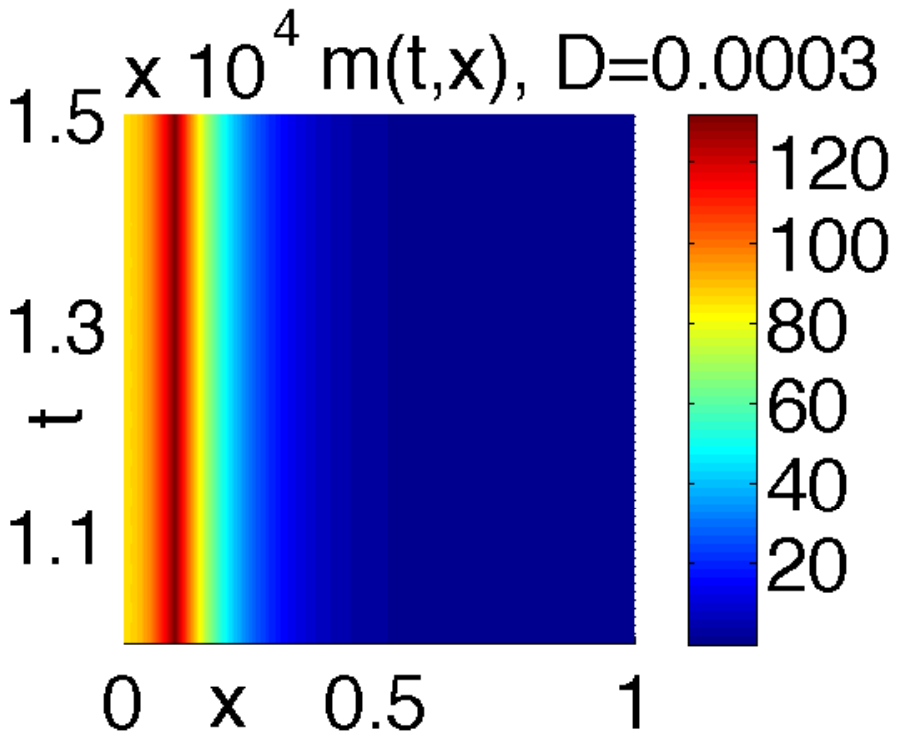}\; \;                
\includegraphics[width=0.4\textwidth]{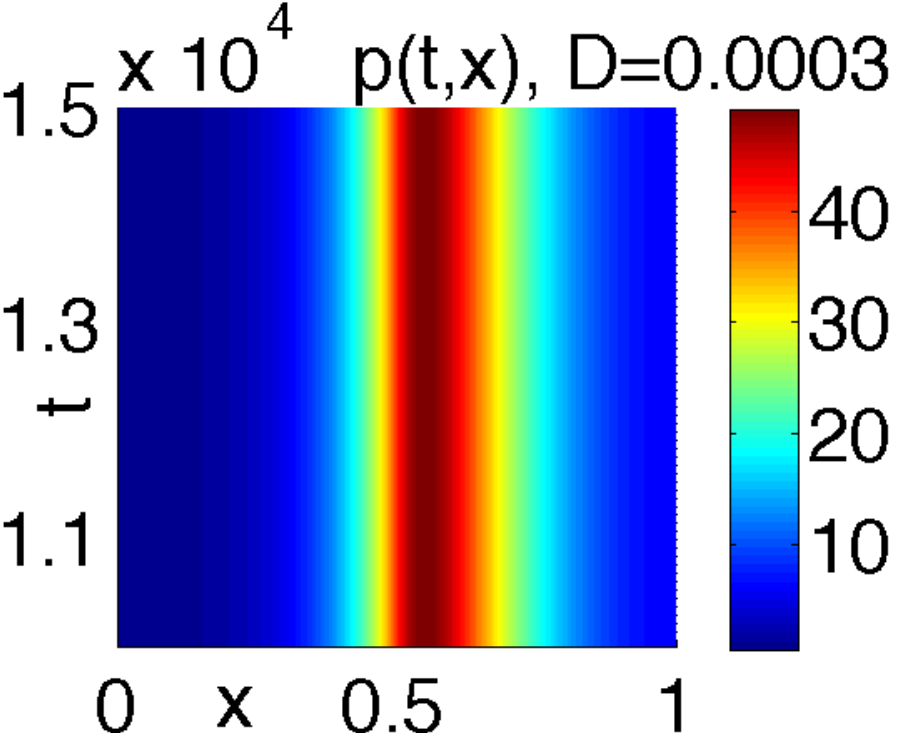}

 \includegraphics[width=0.4\textwidth]{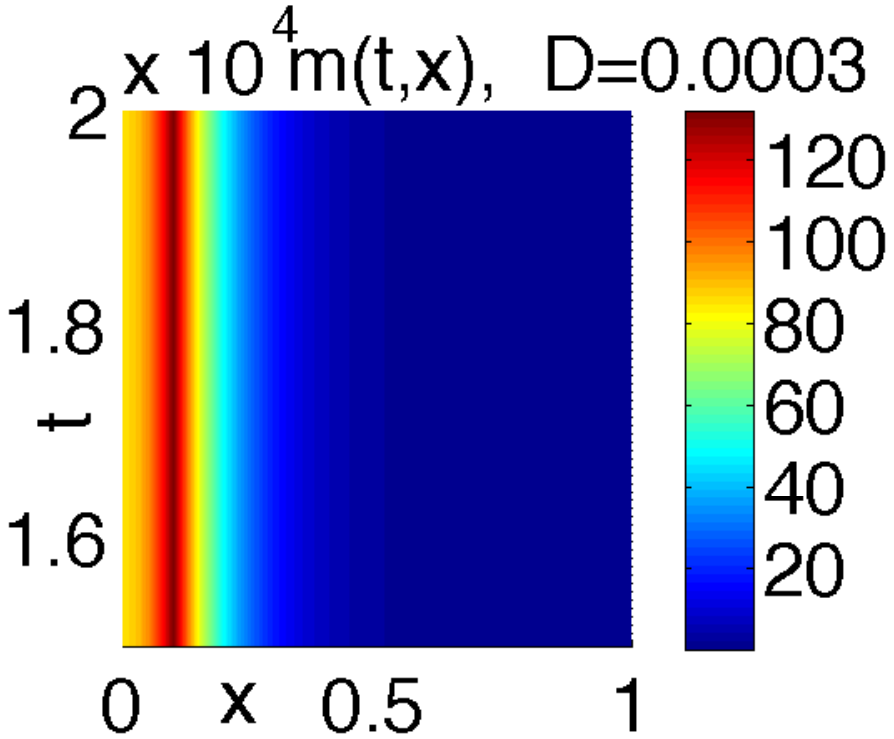}    \; \;            
\includegraphics[width=0.4\textwidth]{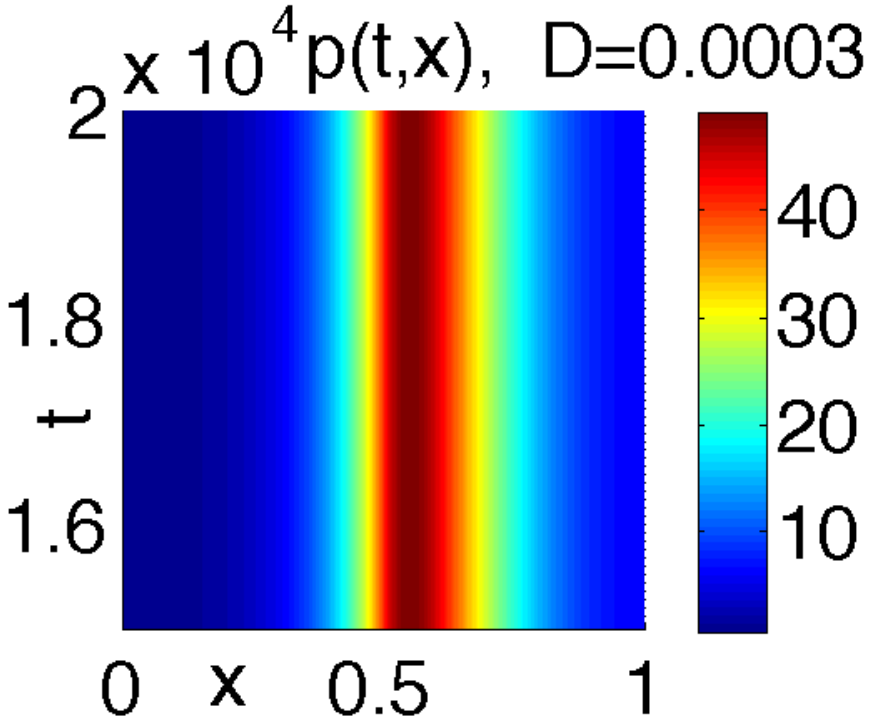}

\includegraphics[width=0.3\textwidth]{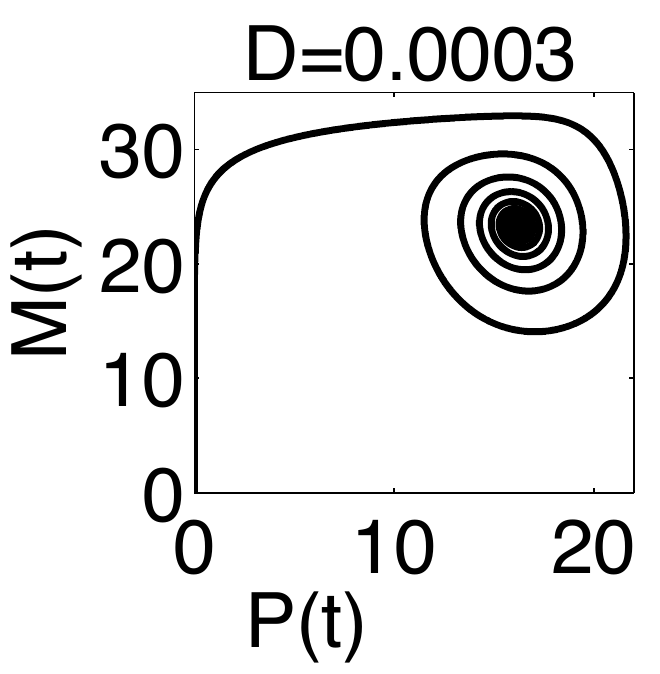}\; \; \; \; \; \; 
\includegraphics[width=0.3\textwidth]{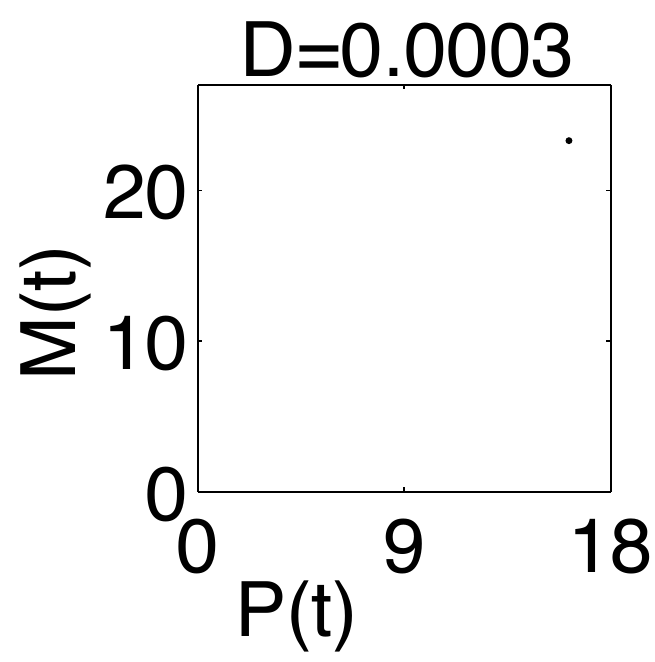}

  \caption{First two rows: plots showing the spatio-temporal evolution of mRNA level, $m(t,x)$, and protein level, $p(t,x)$, from numerical  simulations of system \eqref{dynamic_problem} with zero initial conditions, with $\ve=10^{-3}$, $D=0.0003$, and $t \in[10^4, 2\cdot 10^4]$. The plots show that the solutions tend to a steady-state. Bottom row: the corresponding phase-plots, where $M(t)=\int_0^1 m(t,x) dx$ and  $P(t)=\int_0^1 p(t,x) dx$. The figure on the left is for $t\in [0, 2 \times 10^4]$, and the figure on the right is for $t\in[10^4, 2 \times 10^4]$. These show the trajectory converging to a fixed point, equivalent to the steady-state.}
  \label{fig1}
\end{figure}

   \begin{figure}[h!]
  \centering
    \includegraphics[width=0.4\textwidth]{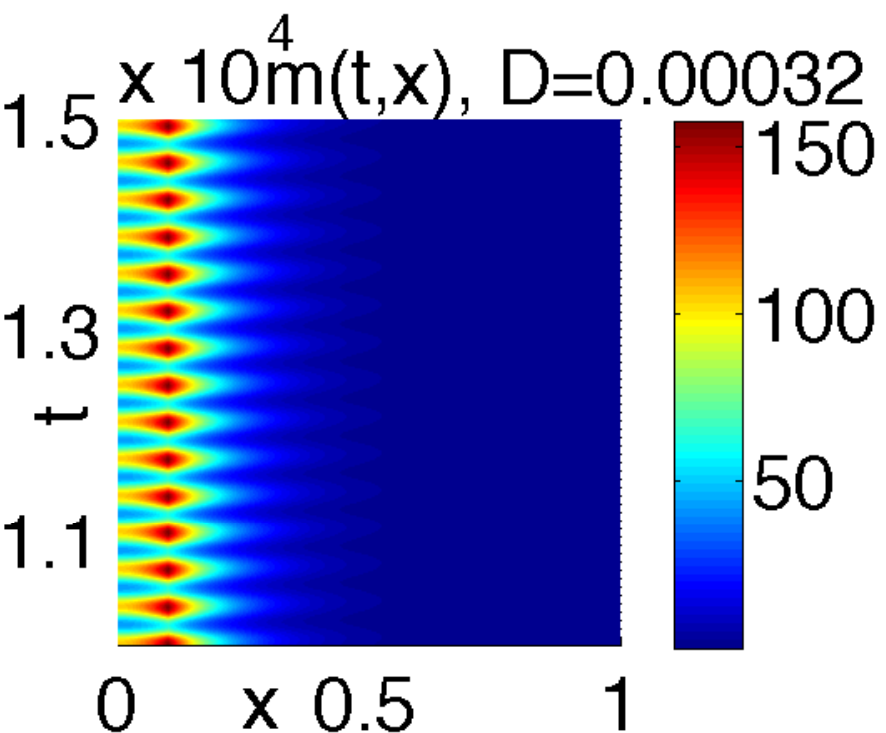}   \; \;             
\includegraphics[width=0.4\textwidth]{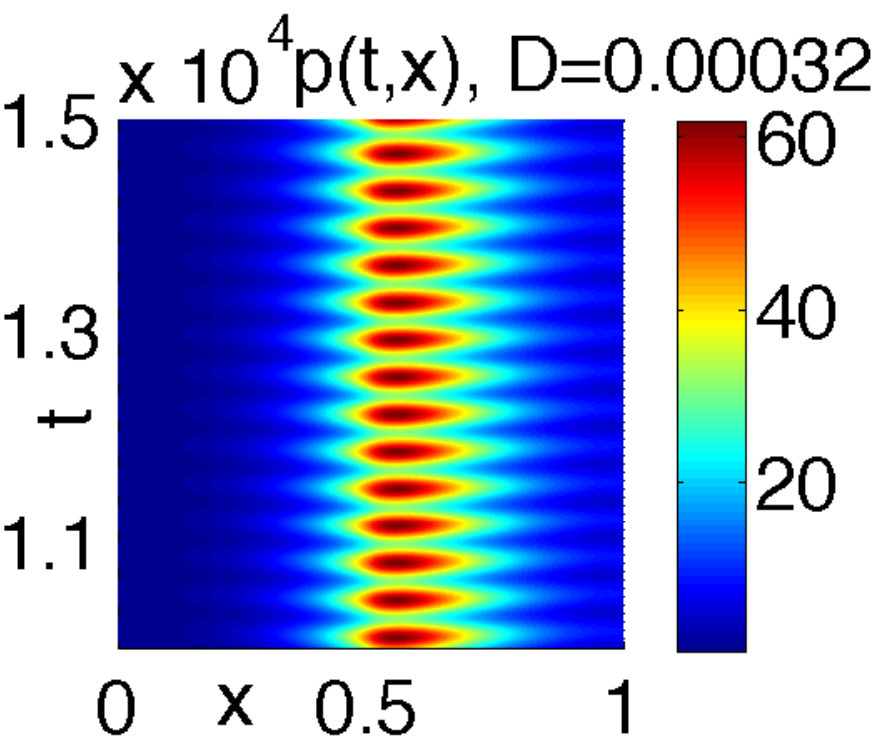}

  \includegraphics[width=0.4\textwidth]{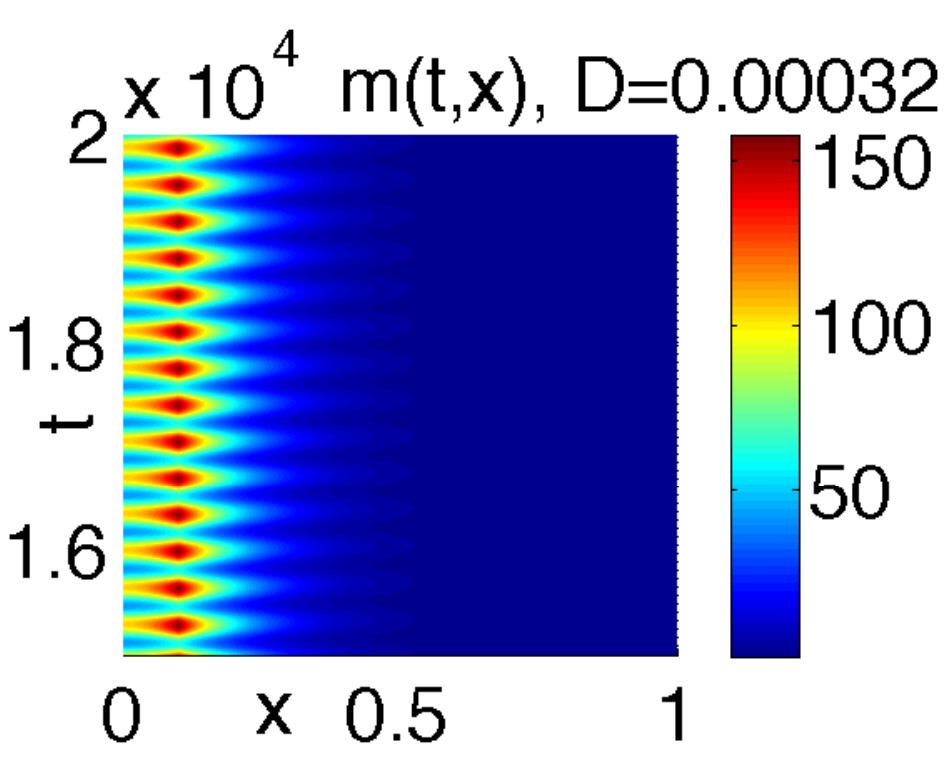}  \; \;              
\includegraphics[width=0.4\textwidth]{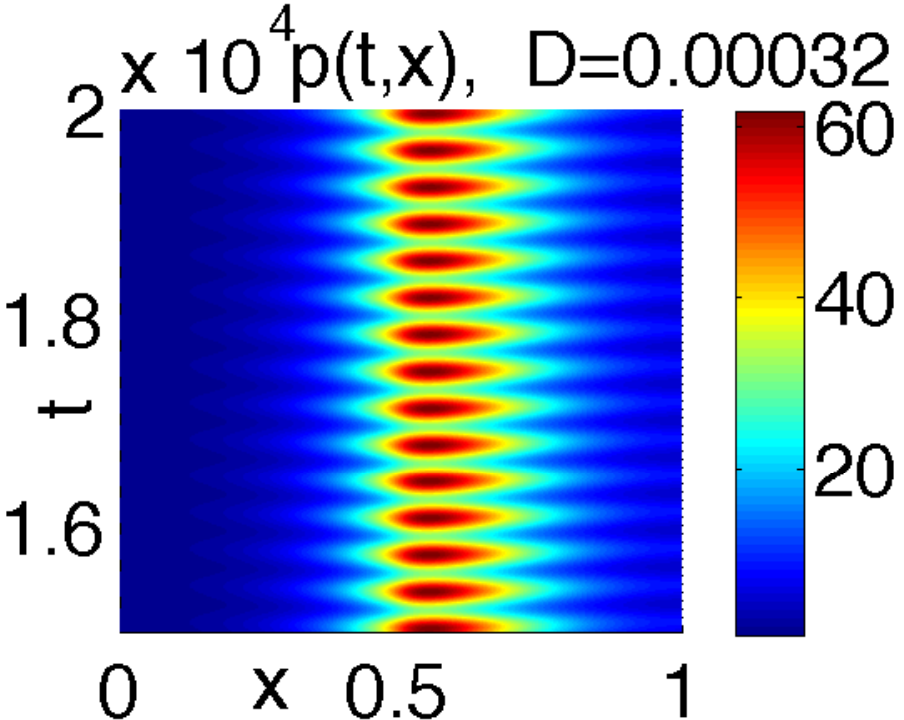}

\includegraphics[width=0.3\textwidth]{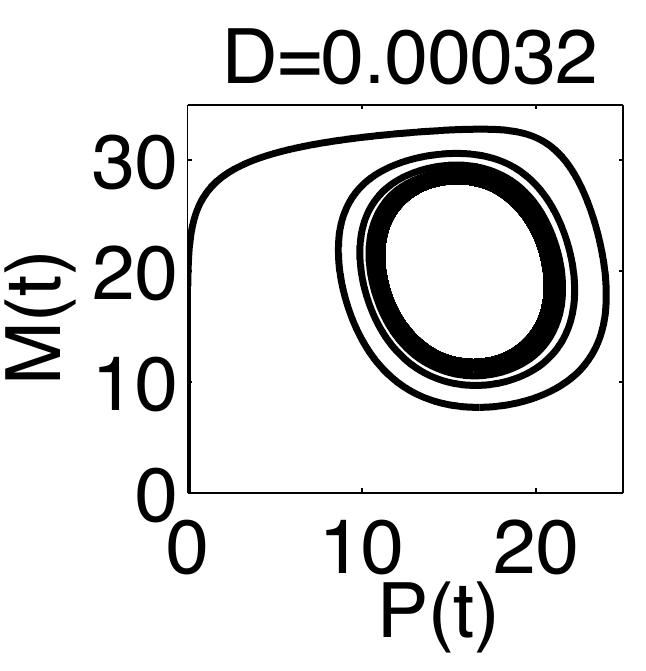} \; \; \; \; \; \; 
\includegraphics[width=0.3\textwidth]{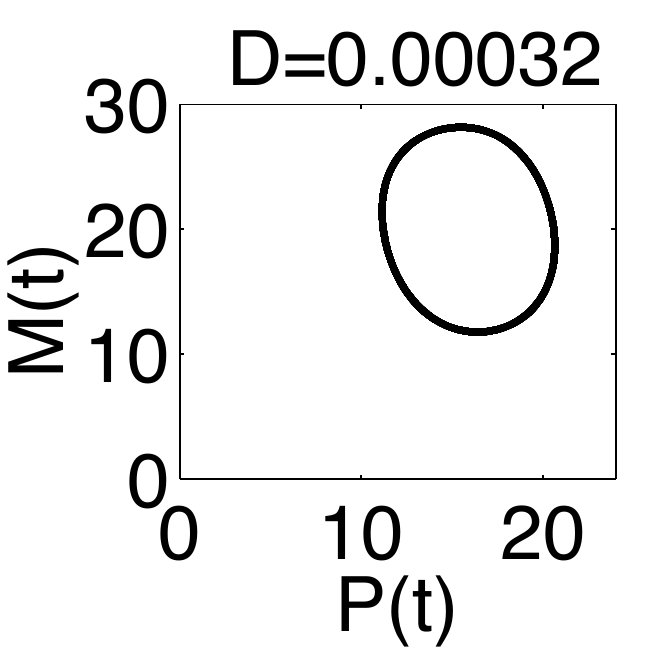}
  \caption{First two rows: plots showing the spatio-temporal evolution of mRNA level, $m(t,x)$, and protein level, $p(t,x)$, from numerical  simulations of system \eqref{dynamic_problem} with zero initial conditions, with $\ve=10^{-3}$, $D=0.00032$ and $t \in[10^4, 2 \times 10^4]$. The plots show oscillatory solutions. Bottom row: the corresponding phase-plots, where $M(t)=\int_0^1 m(t,x) dx$ and  $P(t)=\int_0^1 p(t,x) dx$. The figure on the left is for $t\in [0, 2 \times 10^4]$, and the figure on the right is for $t\in[10^4, 2 \times 10^4]$. These show the trajectory converging to a limit-cycle.}
  \label{fig2}
\end{figure}

 \begin{figure}[h!]
  \centering
  \includegraphics[width=0.405\textwidth]{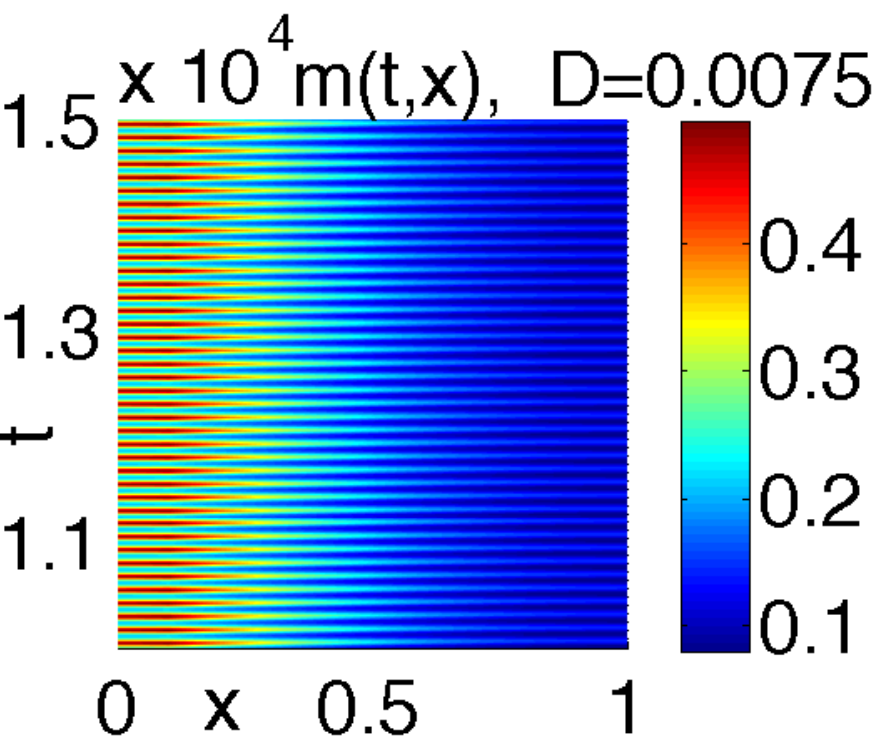}       \; \;         
\includegraphics[width=0.395\textwidth]{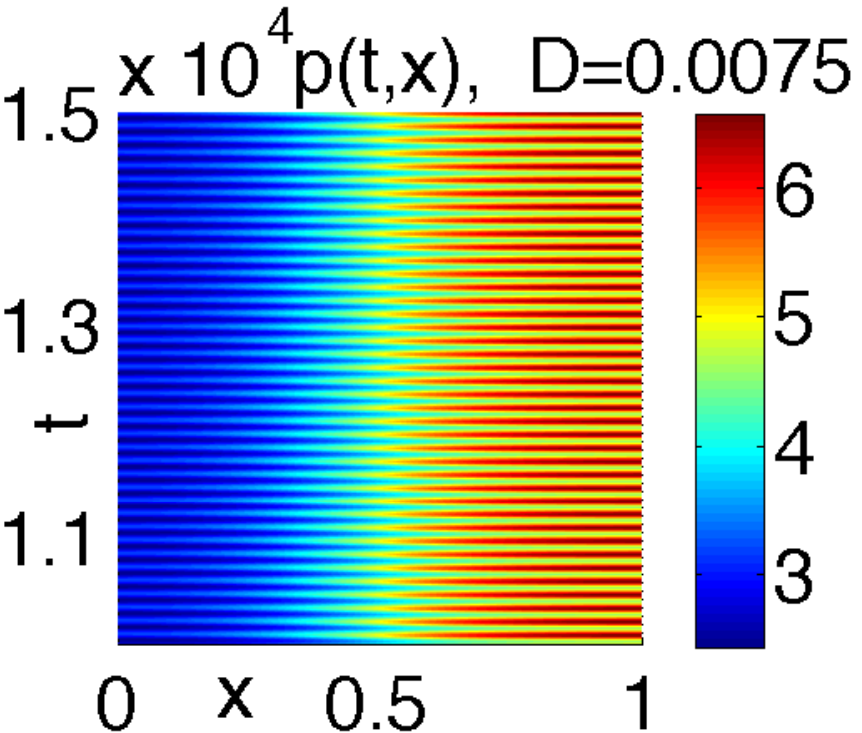}

 \includegraphics[width=0.425\textwidth]{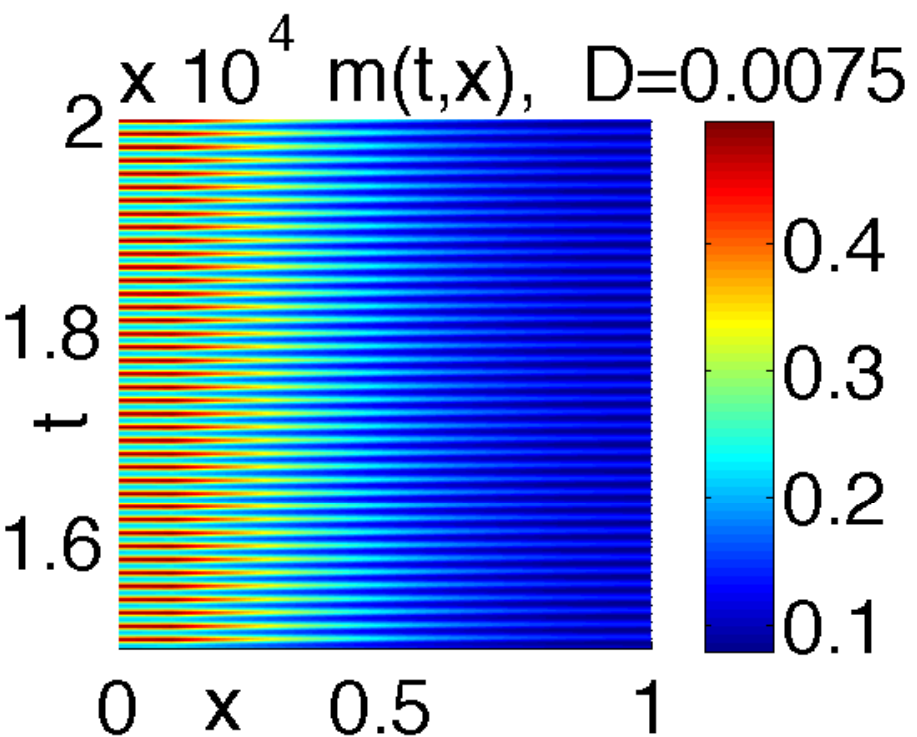}      \; \;          
\includegraphics[width=0.395\textwidth]{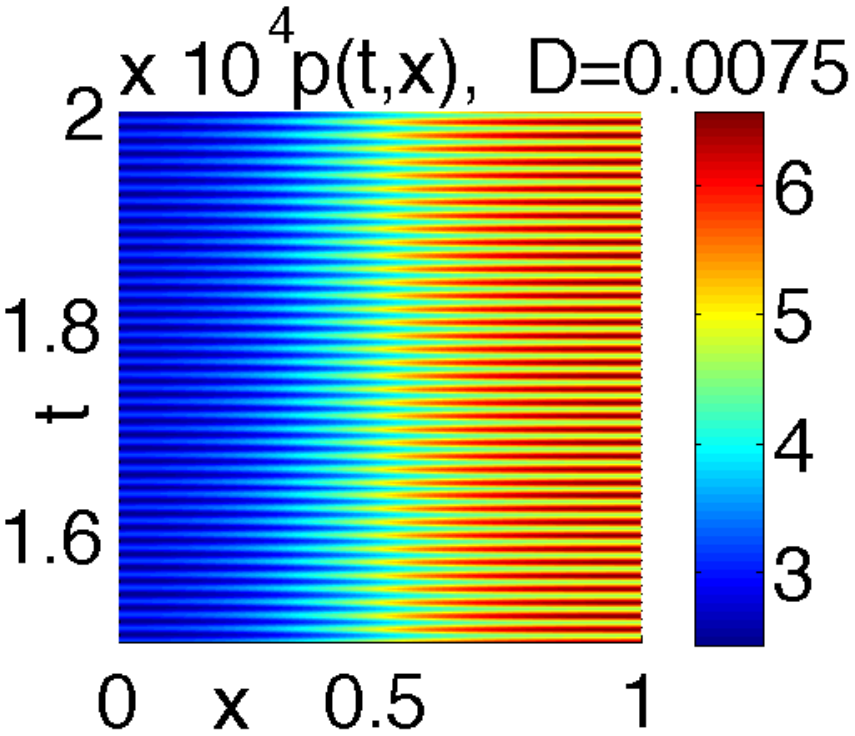}

\includegraphics[width=0.31\textwidth]{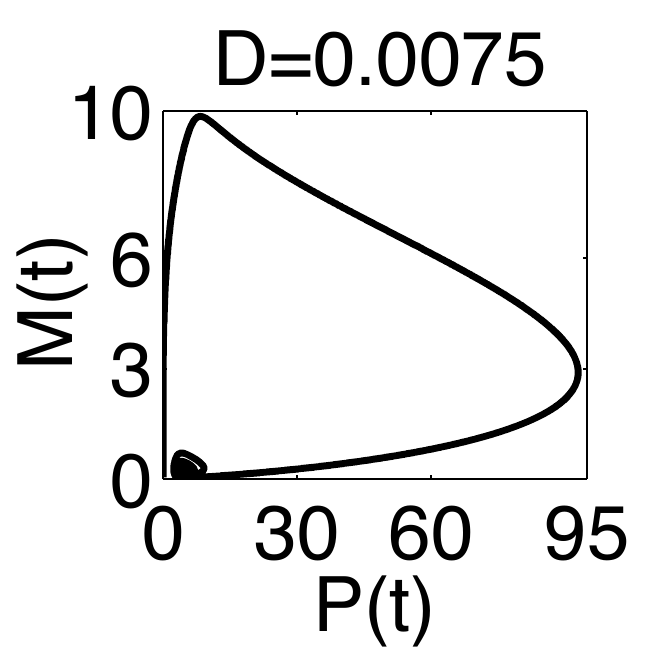} \; \; \; \; \; \; 
\includegraphics[width=0.31\textwidth]{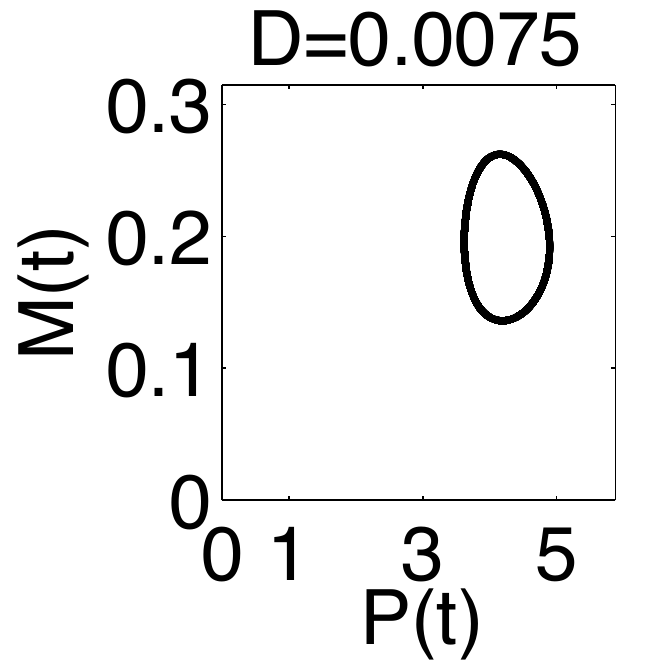}
  \caption{First two rows: plots showing the spatio-temporal evolution of mRNA level, $m(t,x)$, and protein level, $p(t,x)$, from numerical  simulations of system \eqref{dynamic_problem} with zero initial conditions, with $\ve=10^{-3}$, $D=0.0075$ and $t \in[10^4, 2 \times 10^4]$. The plots show oscillatory solutions. Bottom row:  the corresponding phase-plots, where $M(t)=\int_0^1 m(t,x) dx$ and  $P(t)=\int_0^1 p(t,x) dx$. The figure on the left is  for $t\in [0, 2 \times 10^4]$, and the figure on the right is for $t\in[10^4, 2 \times 10^4]$. These show the trajectory converging to a limit-cycle.}
  \label{fig3}
\end{figure}

 \begin{figure}[h!]
  \centering
  \includegraphics[width=0.405\textwidth]{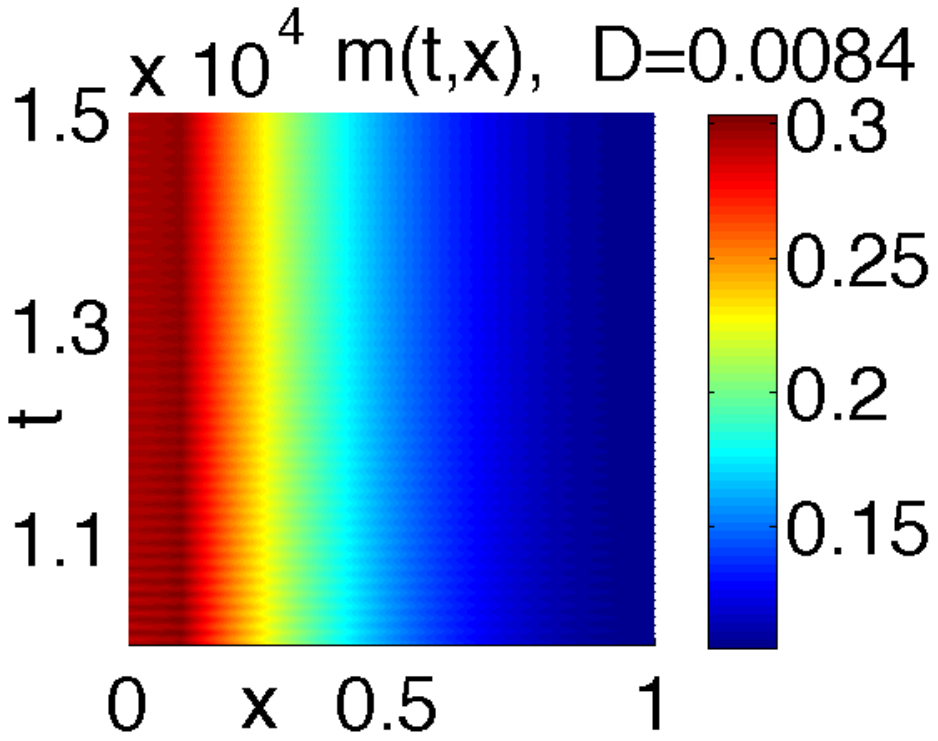}  \; \;              
\includegraphics[width=0.38\textwidth]{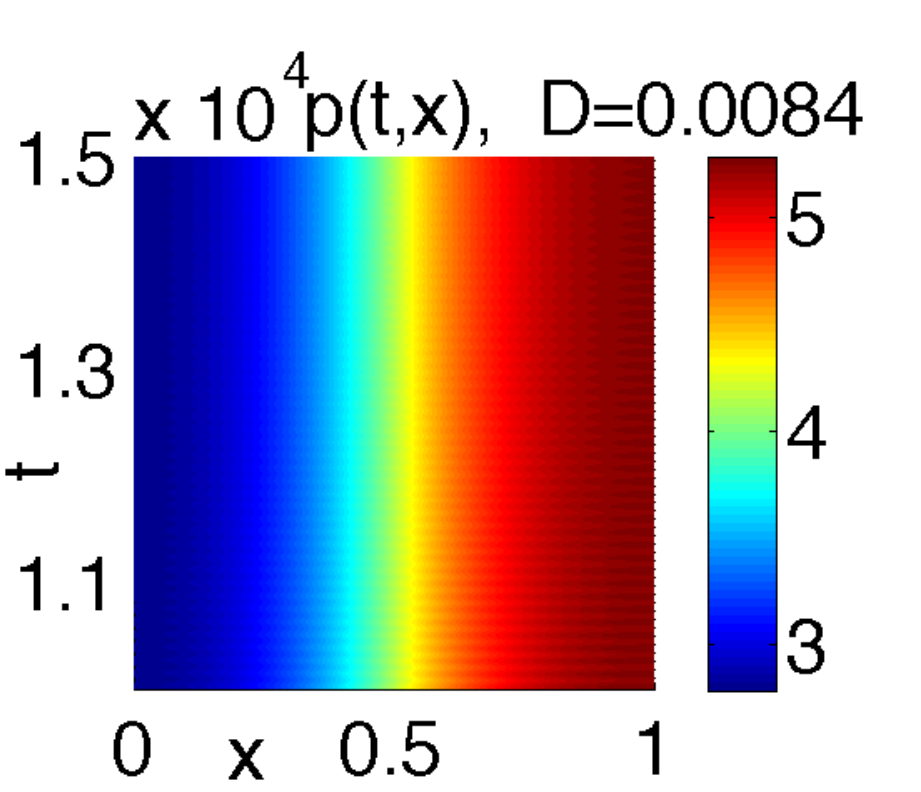}

 \includegraphics[width=0.405\textwidth]{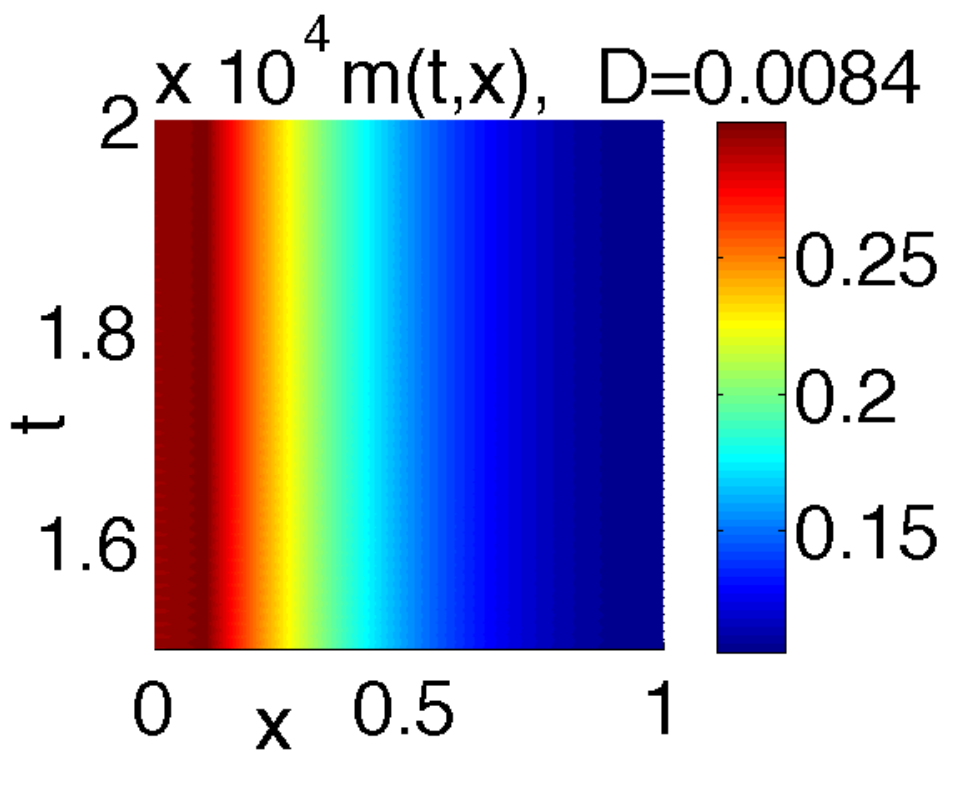}\; \;                
\includegraphics[width=0.4\textwidth]{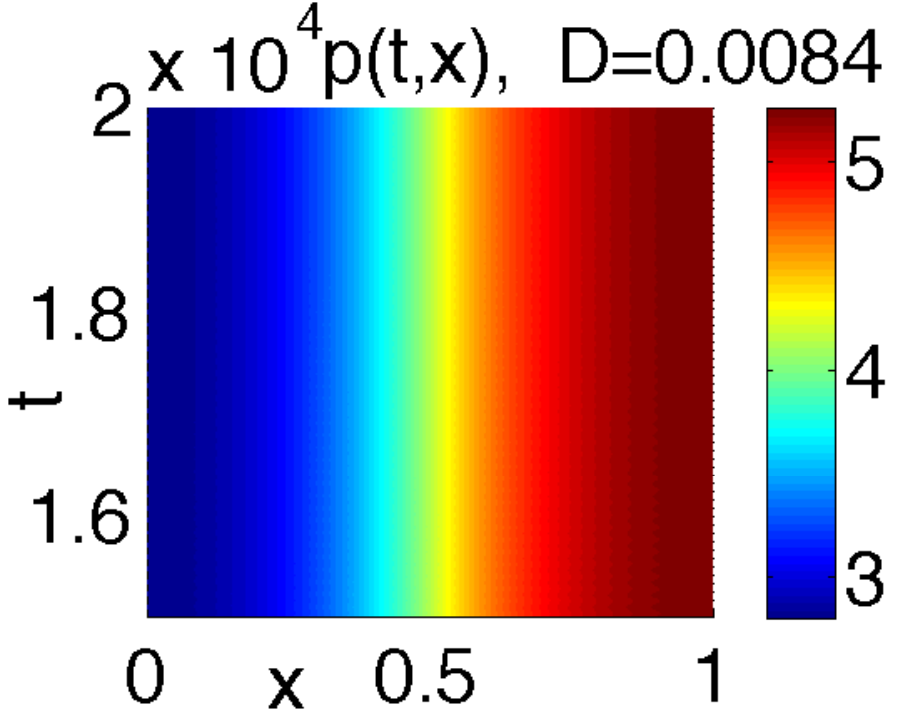}

\includegraphics[width=0.3\textwidth]{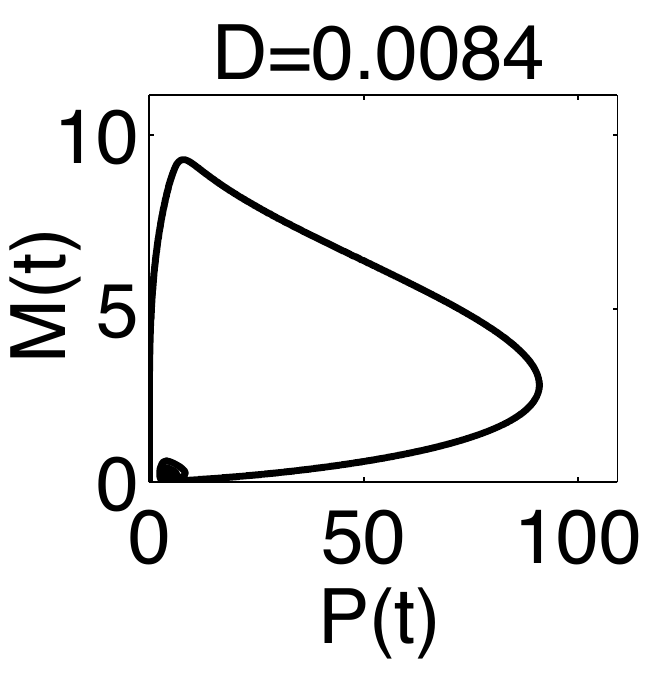}\; \; \; \; \; \; \; 
\includegraphics[width=0.3\textwidth]{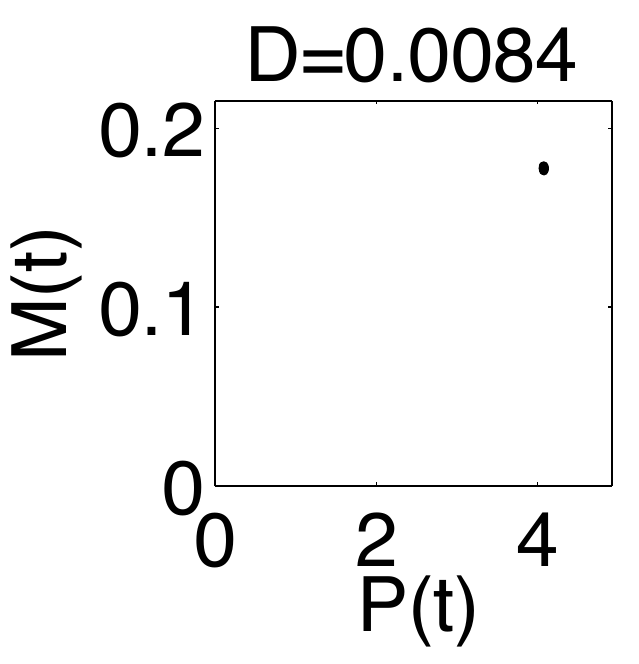}
  \caption{First two rows: plots showing the spatio-temporal evolution of mRNA level, $m(t,x)$, and protein level, $p(t,x)$, from numerical simulations of system \eqref{dynamic_problem} with zero initial conditions, with $\ve=10^{-3}$, $D=0.0084$ and $t \in[10^4, 2 \times 10^4]$. The plots show that the solutions tend to a steady-state. Bottom row: the corresponding phase-plots, where $M(t)=\int_0^1 m(t,x) dx$ and  $P(t)=\int_0^1 p(t,x) dx$. The figure on the left is for $t\in [0, 2 \times10^4]$, and the figure on the right is for $t\in[10^4, 2 \times 10^4]$. These show the trajectory converging to a fixed point, equivalent to the steady-state.}
  \label{fig4}
\end{figure}

In the following sections we shall analyse the existence and stability of a family of periodic solutions bifurcating from the stationary solution. We shall show that at both critical values of the diffusion coefficient a supercritical Hopf bifurcation occurs. 

\section{Hopf Bifurcation Analysis}
In this section we shall  prove the existence of a Hopf bifurcation for the model  \eqref{dynamic_problem} by showing that all conditions of the Hopf Bifurcation theorem are satisfied, see e.g. Crandall \& Rabinowitz \cite{Crandall}, Ize \cite{Ize} or Kielh\"ofer \cite{Kielhoefer}. In order to achieve this, we examine first the stationary solutions of \eqref{dynamic_problem}. A stationary solution  $u^\ast_\ve=(m^\ast_\ve, p^\ast_\ve)$  of the system \eqref{dynamic_problem}  satisfies the  following one-dimensional boundary-value problem: 
\begin{align}\label{stat_problem}
\begin{aligned}
&D \frac {d^2 m^\ast_\ve } { dx^2}  + \alpha_m  \, f(p^\ast_\ve ) \,  \delta^\ve_{x_M}(x) - \mu \,  m^\ast_\ve =0 \qquad \text{ in } (0,1) \; , \\
&  D \frac {d^2 p^\ast_\ve } { dx^2}  + \alpha_p \,  g(x) \, m^\ast_\ve - \mu \, p^\ast_\ve =0 \qquad \hspace{ 1.15 cm } \text{ in } (0,1)\; , \\
&\frac{d m^\ast_\ve(0)}{dx}  = \frac{dm^\ast_\ve(1)}{dx}   =0, \qquad \frac{d p^\ast_\ve(0) }{dx} = \frac{d p^\ast_\ve(1)} { dx}   =0 \; .
\end{aligned}
\end{align} 

\noindent The  operator  $ \mathcal{ \tilde A}_0=\big(D\dfrac{d^2}{dx^2} - \mu \big)$, defined on the interval $[0,1]$ and subject to the Neumann boundary conditions, 
 $$
\mathcal D( \mathcal{ \tilde A}_0) = \{ v \in H^2(0,1): \, v^\prime(0) =0, \, v^\prime(1) =0 \},
 $$ 
is invertible and solutions of the problem  \eqref{stat_problem} can be defined as
\begin{align}\label{stat_sol}
\begin{aligned}
& m^\ast_\ve(x,D) = \alpha_m \int_0^1 G_\mu(x, y) f(p^\ast_\ve(y, D))\delta_{x_M}^\ve(y) \, dy   \; ,  \\
& p^\ast_\ve(x,D) = \alpha_m\alpha_p  \int_0^1 g(z)  G_\mu(x,z) \int_0^1 G_\mu(z,y) f(p^\ast_\ve(y, D))\delta_{x_M}^\ve(y) \, dy   \, dz  \; , 
\end{aligned}
\end{align} 
where
 $$
G_{\mu}(y,x)=
\begin{cases}
\displaystyle{\frac 1{( \mu D)^{1/2} \sinh(\theta)}}  \cosh(\theta \, y) \cosh(\theta \, (1-x)) \quad \text{ for } \, 0< y<x<1 \; , \\
\\
\displaystyle{\frac 1{ (\mu D)^{1/2} \sinh(\theta)}}  \cosh(\theta \, (1-y)) \cosh(\theta \, x) \quad \text{ for } \,  0<x<y<1 \; , 
\end{cases} 
$$
with  $\theta=( {\mu}/{D})^{1/2}$, is the Green's function satisfying the boundary-value problem 
\begin{equation*}
D G_{yy}- \mu G = -\delta_x \quad \text{ in } \, (0,1), \qquad G_y(0,x)=G_y(1,x) = 0.
\end{equation*}

Due to the boundedness of  $f$ for nonnegative $p^\ast_\ve$, the continuous embedding of $H^1(0,1)$ into $C([0,1])$ and the properties of the Dirac sequence, we obtain for nonnegative solutions of   \eqref{stat_problem}  the {\it a priori} estimates  
\begin{eqnarray}\label{estim_stationary}
 \|m^\ast_\ve\|_{H^1(0,1)} \leq C\; , \,\,  \|m^\ast_\ve\|_{C([0,1])} \leq C\; ,  \, \, 
\|p^\ast_\ve\|_{H^1(0,1)} \leq C\; , \, \,   \|p^\ast_\ve\|_{H^2(0,1)} \leq C \;, 
\end{eqnarray}
with a constant $C$ independent of $\ve$. 
\\
From the second equation  in \eqref{stat_sol} we have that 
\begin{equation}\label{eq_steady_state_p}
p^\ast_\ve(x,D) = K(p^\ast_\ve(x,D))
\end{equation}
with $K(p) =\alpha_m \alpha_p  (- \mathcal{\tilde  A}_0)^{-1}\left(g(-\mathcal{\tilde  A}_0)^{-1}\big(\delta_{x_M}^\ve f(p)\big)\right)$, where  $K: C([0,1]) \to C([0,1])$ is compact, since  $(- \mathcal{\tilde  A}_0)^{-1}$ is compact.    
Consider a closed convex bounded subset $D =\{ p \in C([0,1]) : \, 0 \leq p(x) \leq C+1 \text{ for } x \in [0,1]\}$   of $C([0,1])$, where the constant $C$ is as in estimates \eqref{estim_stationary}.
The estimates \eqref{estim_stationary}  and the fact that  $K(p) > 0$ for $p \geq 0$ imply $ p - K(p)\neq 0$ for $p \in \partial D$.
Thus  Leray-Schauder degree theory, e.g. Chapter 12.B in  Smoller \cite{Smoller},  guarantees the existence  of a positive   solution  of \eqref{stat_problem}.  
The linearised  equations  \eqref{stat_problem} at the steady state  $(m^\ast_\ve, p^\ast_\ve)$ can be written
\begin{equation}\label{linear_1}
\mathcal A u =0, 
\end{equation} 
where  $u=(u_1, u_2)$ and 
 $\mathcal A = \mathcal  A_0 + \mathcal A_1$ with 
the operator $\mathcal A_0$  given as
\begin{equation}\label{operator_A0}
\mathcal A_0= \Big(D\dfrac{d^2}{dx^2} - \mu \Big) I
\end{equation}
on the interval $[0,1]$, subject to the Neumann boundary conditions, 
 \begin{equation*}
 \mathcal D(\mathcal A_0) = \{ v \in H^2(0,1)\times H^2(0,1): \, v^\prime(0) =0, \, v^\prime(1) =0 \}, 
 \end{equation*}
and the  bounded operator
 \begin{equation}\label{operator_A1}
\mathcal A_1 = \begin{pmatrix}
0 &  \alpha_m f^\prime( p^\ast_\ve(x, D))  \, \delta^\ve_{x_M}(x) \\
\alpha_p g(x) & 0
\end{pmatrix} \;  .
\end{equation}
If for a solution $u=(u_1, u_2)$ of \eqref{linear_1} we have  $u_2(x) = 0$ in $(x_M - \ve, x_M+ \ve)$,   then  $u\equiv (0,0)$ and $\mathcal A$ is invertible.
Suppose there exists a non-trivial solution of  \eqref{linear_1}  with  $u_2(x) \neq 0$ in  $ (x_M - \ve, x_M+ \ve)$.
Using the continuity of $u_2$, we can assume  $u_2(x) > 0$ in $(x_M - \ve, x_M+ \ve)$ for  small $\ve$.  Then,  the properties of $f$ and positivity of  $(-\mathcal{\tilde A}_0)$ and of the steady state  $(m^\ast_\ve, p^\ast_\ve)$ ensure  
$$
u_2(x) - \alpha_m \alpha_p (-\mathcal{\tilde A}_0)^{-1}\Big(g\, (-\mathcal{\tilde A}_0)^{-1}\big( f^\prime( p^\ast_\ve)   \delta^\ve_{x_M} u_2\big)\Big)(x) >0 \, \,  \text{ for } x \in (x_M - \ve, x_M+ \ve) \; .
$$
This last inequality implies a contradiction, since $u_2$ was a solution of \eqref{linear_1}. 
Therefore,  $\mathcal A$ is invertible for every $D \in [d_1, d_2]$. Thus for every fixed small $\ve>0$    we have a family in $D \in [d_1, d_2]$ of isolated positive stationary solutions $(m^\ast_\ve(x,D), p^\ast_\ve(x,D)) \in H^2(0,1)\times H^2(0,1)$  of \eqref{dynamic_problem}.

The {\it a priori} estimates  imply the weak convergences $m^\ast_\ve \rightharpoonup m^\ast_0 $ in $H^1(0,1)$  and $p^\ast_\ve  \rightharpoonup  p^\ast_0$  in $H^{2}(0,1)$, and, by the compact embedding of $H^1(0,1)$  in $C([0,1])$ and of $H^2(0,1)$ in $C^1([0,1])$, also strong convergence  in $C([0,1])$  and in $C^1([0,1])$ as $\ve \to 0$, respectively, where  
\begin{align}\label{stat_sol_delta}
\begin{aligned}
& m^\ast_0(x,D) = \alpha_m G_{\mu}(x, x_M) f(p^\ast_0(x_M, D)) \; ,  \\
& p^\ast_0(x,D) = \alpha_m\alpha_p  f(p^\ast_0(x_M,D))\int_{0}^1 g(y)\,  G_{\mu}(x, y) \, G_{\mu}(y, x_M) \, dy , \; 
\end{aligned}
\end{align} 
is a  solution of the model \eqref{stat_problem} with the Delta distribution $\delta_{x_M}$ instead of the Dirac sequence $\delta^\ve_{x_M}$. 
Since $x_M < l$  and  $g(y) = 0$ for $0 \leq y < l$, we  have 
$$
G_\mu (y, x_M) = \frac 1{ (\mu D)^{1/2} \sinh(\theta)}   \cosh (\theta(1-y))\cosh( \theta x_M), \quad  x_M < y < 1, \quad 
$$
where $\theta = \left(\mu/D\right)^{1/2}$ and,  using $g(y)=1$ for $l\leq y \leq 1$, we obtain 
\begin{eqnarray*}
&& p^\ast_0(x, D) =
 \frac{\alpha_m \alpha_p f( p^\ast_0(x_M, D)) }{2\mu D \sinh^2(\theta)}   \cosh(\theta  x_M) \times \\
&& \times   \Big[
 \cosh(\theta(1-x))\Big(  \cosh(\theta) y\Big|_{l}^x  - \frac 1{2\theta} \sinh(\theta(1-2y))\Big|_{l}^x \Big)_{x>l}  \\
 &&+   \cosh(\theta   x)
\Big( y \Big|_{\max\{x, l\}}^1 -\frac 1{2\theta} \sinh(2\theta(1-y))\Big|^1_{\max\{x, l\}}\Big) \Big] \; . 
\end{eqnarray*}
It can be shown numerically that the nonlinear equation 
\begin{eqnarray}\label{stationary_xM}
p^\ast_0(x_M, D) = f(p^\ast_0(x_M, D)) \frac{\alpha_p \alpha_m} 4    \frac  {\cosh^2(\theta \, x_M)}
  { \mu\,  D\, \theta \,   \sinh^2(\theta)}
\Big[ \theta +  \sinh(\theta) \Big]
\end{eqnarray}
has only one positive solution for all values of $D \in [d_1, d_2]$.

Thus, since $m^\ast_0(x, D)$ and $p^\ast_0(x, D)$  are uniquely defined by $p^\ast_0(x_M, D)$, for every $D \in [d_1, d_2]$   we have a unique positive solution of \eqref{stat_problem} with $\ve=0$.  
 Then the strong convergence of $m^\ast_\ve \to m^\ast_0$, $p^\ast_\ve \to p^\ast_0$  as $\ve \to 0$  in $C([0,1])$ and the fact that  nonnegative steady states $(m^\ast_\ve, p^\ast_\ve)$ are isolated imply the uniqueness of the positive steady state  of \eqref{dynamic_problem} for small $\ve >0$ and $D \in [d_1, d_2]$.
 
Before carrying out our analysis, to better understand the structure of the stationary solutions of \eqref{dynamic_problem} we can consider their structure under extreme values of the diffusion coefficient $D$. For very small diffusion coefficients $D \ll 1$, in the zero-order approximation we obtain 
\begin{equation*}
\begin{aligned}
&0  =  \alpha_m \, f(p^\ast_\ve )\delta^\ve_{x_M}(x) - \mu \,  m^\ast_\ve\; ,  \quad &0  =  \alpha_p \, g(x) \, m^\ast_\ve - \mu\,  p^\ast _\ve \quad \text{ in }  (0,1) \; .
\end{aligned}
\end{equation*}
 Since $g(x) =0$ for $x \in [0, l)$,  the second equation yields that $p^\ast_\ve(x,D) = 0$ in $[0,l)$ and thus   $m^\ast_\ve(x,D) = \frac{\alpha_m}{\mu} \delta^\ve_{x_M}(x)$ in $[0,1]$.  Using the fact that  $x_M \in (0,l)$ we obtain  for sufficiently small $\ve > 0$ that  $m^\ast_\ve(x,D) =0 $ for $x\in [l,1]$ and thus   $p^\ast_\ve(x,D) = 0$ in $[0,1]$. 
 Therefore for very small $D$ we have localisation of mRNA concentration around $x_M$, whereas the concentration of protein is approximately zero everywhere in $[0,1]$. 
 
For large diffusion coefficients, i.e.  $D \gg 1$ and therefore $1/D \ll 1$, we have
 \begin{equation*}
\begin{aligned}
&0 =  \frac{d^2 m_\ve^\ast} { dx^2} +  \frac{1}{D} \big(\alpha_m \, f(p^\ast_\ve )\delta^\ve_{x_M}(x) - \mu \,  m^\ast_\ve  \big) \quad \text{ in } (0,1)\; , \\
&0 =  \frac{ d^2 p_\ve^\ast}{dx^2} +  \frac{1}{D} \big(\alpha_p \, g(x) \, m^\ast_\ve - \mu\,  p^\ast_\ve  \big)\quad \hspace{1.1 cm } \text{ in } (0,1)\; , \\
& \frac{dm^\ast_\ve}{dx}(0) = \frac{d m^\ast_\ve}{dx}(1) =0 , \qquad \frac{d p^\ast_\ve} { dx} (0)= \frac{ d p^\ast_\ve} {dx} (1) = 0\; .
\end{aligned}
\end{equation*}
Thus  $m^\ast_\ve(x, D) \approx \text{const}$ and  $p^\ast_\ve (x, D) \approx  \text{const}$. 

Representative stationary solutions, calculated numerically from  \eqref{stat_sol_delta}, in the cases $D=10^{-6} \ll 1$ and $D=100 \gg 1$ can be seen in Figure~\ref{fig_steady_state}, confirming the preceding analysis. 

\begin{figure}[h!]
\begin{center}
  \includegraphics[width=0.41\textwidth]{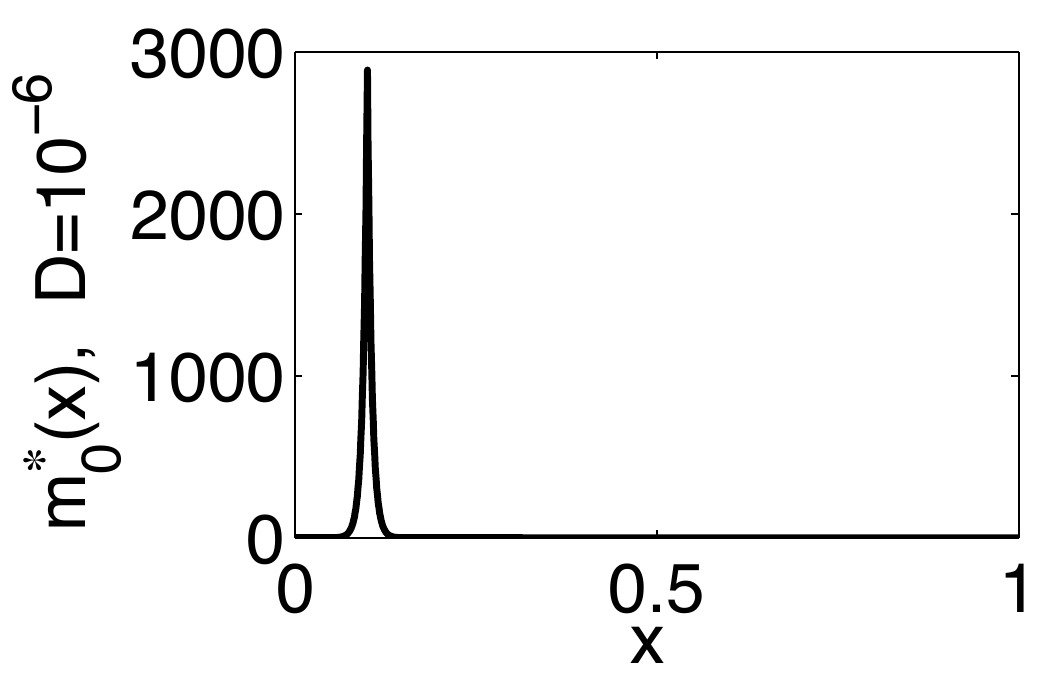} \;\;
   \includegraphics[width=0.415\textwidth]{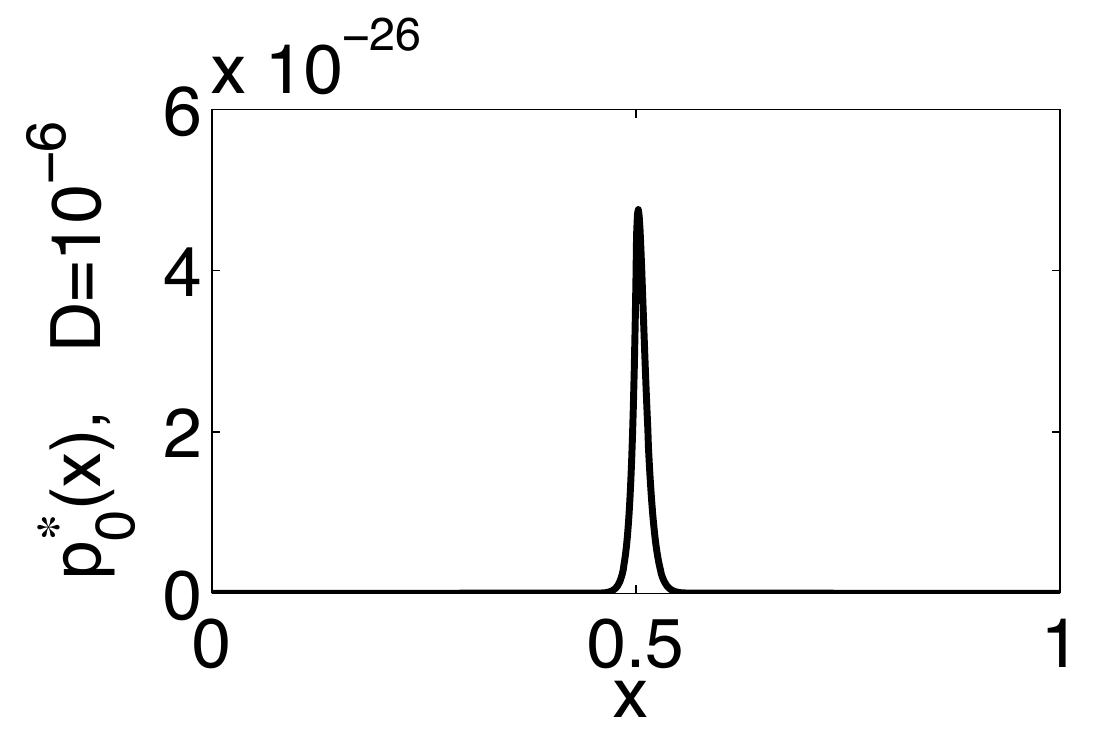}   
 \\
  \includegraphics[width=0.41\textwidth]{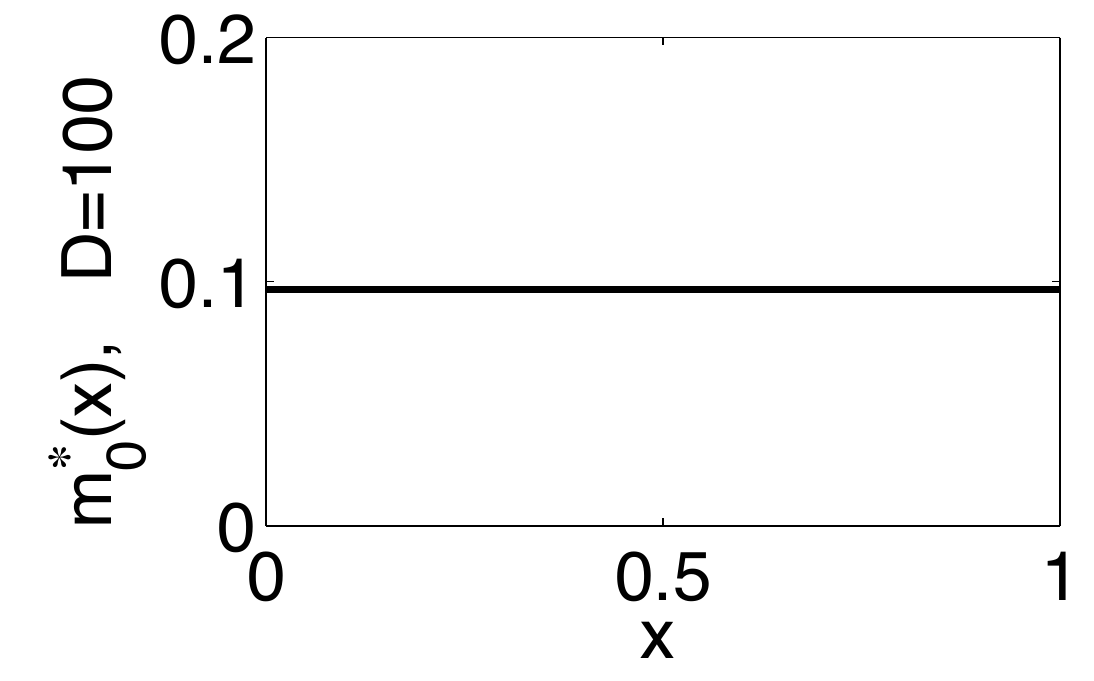} \;\;
   \includegraphics[width=0.41\textwidth]{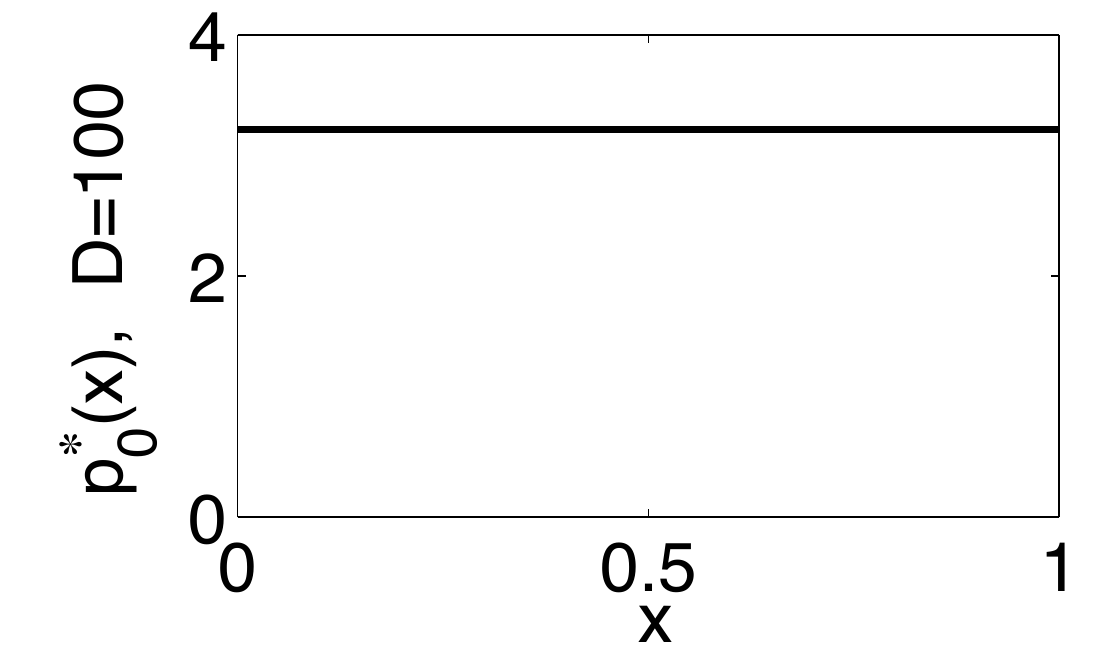} 
     \\             
\includegraphics[width=0.43\textwidth]{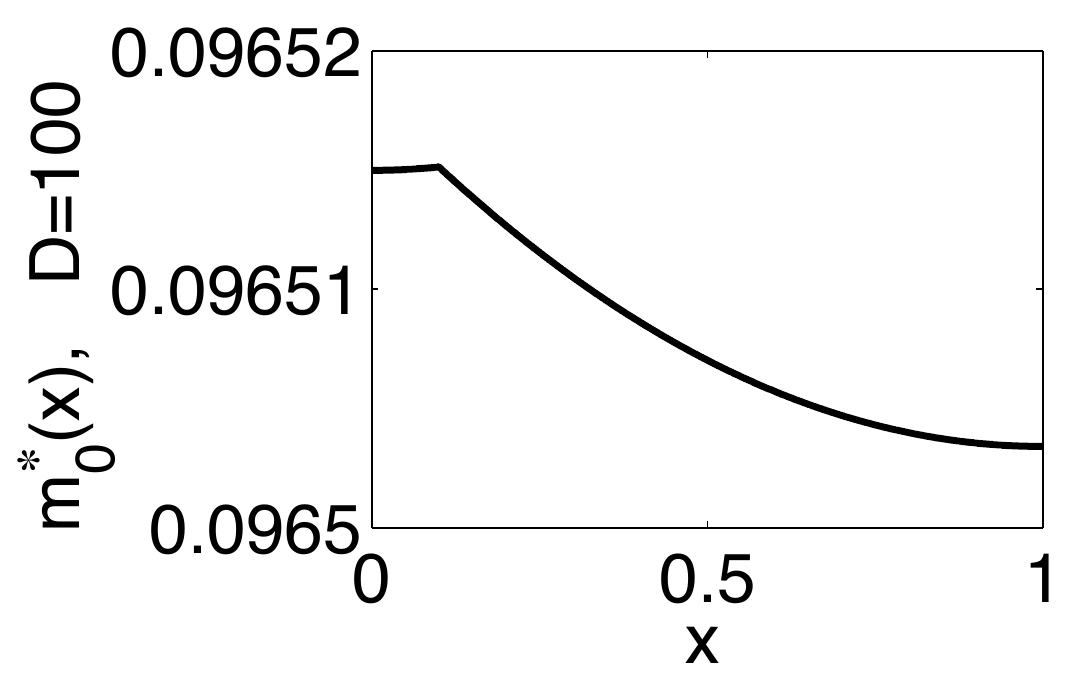}\;\;
 \includegraphics[width=0.42\textwidth]{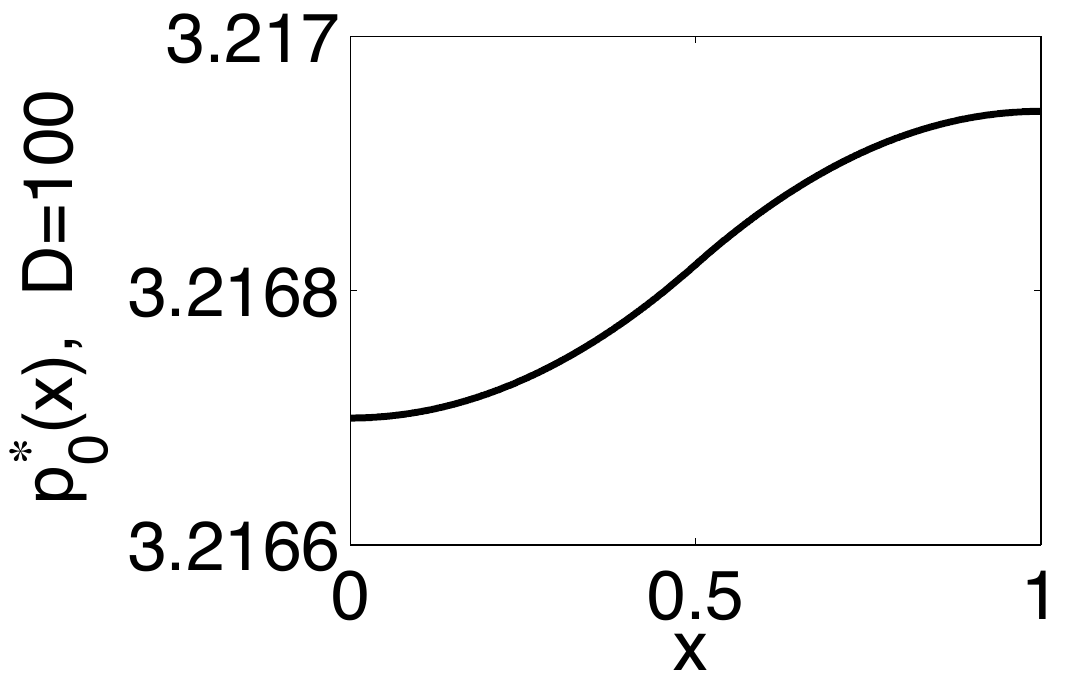} 
  \caption{Plots showing representative stationary solutions of the system \eqref{stat_sol_delta}, i.e. mRNA (left plot) and protein (right plot) steady-state concentrations, for $D=10^{-6}$ (top row) and $D=100$ (middle row, bottom row). The bottom row shows the stationary solution for $D=100$ at higher resolution.}
  \label{fig_steady_state}
  \end{center}
\end{figure}
 
Now, to study the linearized stability  of the steady-state solution  of the nonlinear model  \eqref{dynamic_problem} we shall  apply a version of Theorems 5.1.1 and 5.1.3 in Henry \cite{Henry} adapted for our situation.  We can write the system  \eqref{dynamic_problem} in the Hilbert space  $X=L^2(0,1)\otimes L^2(0,1)$ as 
\begin{equation}\label{non_lin_oper}
\partial_t u = \mathcal A_0 u + \tilde f(u), 
\end{equation}
where $u= (m, p)^T$, the operator $\mathcal A_0$ is defined in \eqref{operator_A0} and   $ \tilde f(u)= 
(\alpha_m f(p) \delta^\ve_{x_M}(x), 
\alpha_p g(x) m)^T$. 
The operator $-\mathcal A_0$ is sectorial  with  $\sigma(\mathcal A_0) \subset (-\infty, -\mu]$ and 
we can introduce interpolation spaces $X^s=\mathcal ((-\mathcal A_0)^s)$, each of which is a Hilbert subspace of $H^{2s}(0,1)\times H^{2s}(0,1)$. 
The function $\tilde f: \mathbb R^2_+ \to \mathbb R^2_+$ is smooth and admits the representation 
$$
\tilde f(y+z) = \tilde f(y) + B(y) z + r(y,z),
$$
where  the remainder satisfies the estimate 
$$
\|r(y,z)\|_{ \mathbb R^2} \leq C_\ve(y) \|z\|^2_{ \mathbb R^2}, 
$$
in a neighbourhood of any point $y \in  \mathbb R^2_+$, and 
$$
B(y) = \begin{pmatrix}
0  &  \alpha_m f^\prime(y_2)\delta^\ve_{x_M} \\
\alpha_p g(x) & 0 
\end{pmatrix} .
$$
For a positive steady state $u^\ast_\ve(x,D)=(m^\ast_\ve(x,D), p^\ast_\ve(x,D))^T$,  with $u^\ast_\ve \in H^1(0,1)\times H^2(0,1)$, we obtain that 
$B(u^\ast_\ve)$ is a bounded linear operator from $X^s$ to $X$ for each $s \in (0,1)$.
The estimate for the remainder implies 
$$
\|r(u^\ast_\ve,z)\|_X \leq  C_\ve \|z\|^2_X \leq C_\ve \|z\|^2_{X^s} = o( \|z\|_{X^s}) \to 0\quad  \text{ as }\quad  \|z\|_{X^s} \to 0,
$$
for every fixed $\ve >0$. Notice that for $s\in [1/2, 1)$, due to the properties of the Dirac sequence and the embedding of $H^1(0,1)$ into $C([0,1])$,  we have the estimates for $B$ and $r$ independent of $\ve$, i.e.
$$
\|B(u^\ast_\ve) z\|_X \leq C \|z\|_{X^s}, \quad \quad 
\|r(u^\ast_\ve,z)\|_X \leq  C \|z\|^2_{X^s} 
$$
with a constant $C$ independent of $\ve$. 

Thus, all assumptions of the Theorems 5.1.1 and 5.1.3 in Henry \cite{Henry} are satisfied and  to analyse  the linearized stability  of the stationary solution  of  the system  \eqref{dynamic_problem} we shall study  the eigenvalue problem:
\begin{align}\label{EW_problem_approx}
\begin{aligned}
&\lambda \bar m^\ve = D \bar m^\ve_{xx} + \alpha_m f^\prime( p^\ast_\ve(x, D))  \, \delta^\ve_{x_M}(x) \, \bar p^\ve - \mu  \bar m^\ve \qquad \text{ in } (0,1) \; , \\
&\lambda \bar p^\ve = D \bar p^\ve_{xx} + \alpha_p g(x) \bar m^\ve - \mu \bar p^\ve  \qquad \hspace{2.75 cm }  \text{ in } (0,1)\; , \\
& \bar m^\ve_x(0)=\bar m^\ve_x(1)=0, \, \, \bar p^\ve_x(0)=\bar p^\ve_x (1) =0, 
\end{aligned}
\end{align} 
or in operator form 
 \begin{equation}\label{EW_problem_approx_operator}
\mathcal A w^\ve = \lambda w^\ve, 
\end{equation}
where  $w^\ve=(\bar m^\ve, \bar p^\ve)^T$ and $\mathcal A = \mathcal  A_0 + \mathcal A_1$, with $ \mathcal A_1$ defined in \eqref{operator_A1}.

We can consider  $ \mathcal A $ as the perturbation of the self-adjoint operator $\mathcal A_0$  with compact resolvent by the bounded operator $\mathcal A_1$. Thus the spectrum of $ \mathcal A $ consists only of eigenvalues. Also the notion of relative boundedness \cite{Kato} can be applied  to $\mathcal A_0$ and $\mathcal A_1$.
 Let $T$ and $S$ be operators with the same domain space $\mathcal H$ such that $D(T) \subset D(S)$  and
$$
\|S u \| \leq a \| u \| + b \| T  u\| , \qquad  u \in D(T), 
$$
where  $a$, $b$  are non-negative constants. We say that $S$ is relatively bounded with respect to $T$, or simply $T$-bounded. Assume that $T$ is closed and there exists a bounded operator  $T^{-1} $, and $S$ is $T$-bounded with constants $a$, $b$ satisfying the inequality
$$
a \| T^{-1} \| + b < 1 .
$$
Then,  $T +S $ is a closed and bounded invertible operator by Theorem 1.16 of Kato \cite{Kato}.

With $\alpha_m =1$, $\alpha_p =2$, $|g(x)| \leq 1$ for all $x\in (0,1)$ and $|f^\prime (p)|= h |p^{h-1}/(1+p^h)^2| \leq 5$ ($h=5$) we have the estimate for $u \in D(\mathcal A_0)$:
\begin{equation}\label{estim_spectrum_A}
\begin{aligned}
\| \mathcal A_1 u \|_X \leq \max\big\{ \alpha_m \sup_{ x\in (0,1)} | f^\prime(p^\ast_\ve(x,D))|,  \alpha_p \sup_{x\in (0,1)}  |g(x)| \big\}( \|m\|_{L^2} + \|p\|_{H^1}) \\
\leq 5 ( \|m\|_{L^2(0,1)} + \|p\|_{H^1(0,1)}) \leq 25 \|u \|_X + 1/4 \|\mathcal A_0 u \|_X\; .
\end{aligned}
\end{equation}
Thus  we obtain that $ \mathcal A_1$ is relatively bounded   with respect to $ \mathcal A_0$ with $a=25$ and $b=1/4$.
Since  $\mathcal A_0$ is self-adjoint, we have
 $$
 \| (\mathcal A_0 - \lambda_0 I)^{-1} \| = \frac 1 { \text{dist}( \lambda_0, \sigma(\mathcal A_0))},
 $$
 and can conclude  that  $
 \mathcal A - \lambda_0 I = \mathcal A_0 + \mathcal A_1 -  \lambda_0 I
 $
 is bounded and invertible for  all $\lambda_0$ such that $\mathcal{R}e (\lambda_0) \geq 0$  and $|\lambda_0| \geq 35$ or $|\mathcal{I}m (\lambda_0)|\geq 35$ . 
Therefore we have uniform boundedness of eigenvalues $\lambda$  of the operator $\mathcal A$ with $\mathcal{R}e(\lambda) \geq 0$.

\begin{theorem}\label{Hopf_existence}
For $\ve>0$  small there exist two critical values of the parameter $D$, i.e. $D^c_{1,\ve}$ and $D^c_{2,\ve}$, for which a Hopf bifurcation occurs  in the model \eqref{dynamic_problem}. 
\end{theorem}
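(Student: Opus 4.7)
The plan is to verify the hypotheses of the classical Hopf bifurcation theorem (e.g.\ Crandall--Rabinowitz, Kielh\"ofer) for the linearisation $\mathcal A(D) = \mathcal A_0 + \mathcal A_1(D)$ at the stationary solution $u^\ast_\ve(\cdot,D)$: at each critical value $D^c_{j,\ve}$, $j=1,2$, I need $\mathcal A(D^c_{j,\ve})$ to admit a simple pair of complex conjugate eigenvalues $\pm i\beta$ with $\beta \neq 0$ crossing the imaginary axis transversally in $D$, together with the non-resonance condition $ik\beta \notin \sigma(\mathcal A(D^c_{j,\ve}))$ for $k \in \mathbb Z$, $|k|\neq 1$. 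Because the steady state is spatially inhomogeneous, the spectral analysis cannot proceed by Fourier decomposition; instead I would follow the strategy announced in the introduction: pass first to a limit problem as $\ve \to 0$, characterise its spectrum through a scalar dispersion relation, and then transfer the conclusions back to $\ve > 0$ via a collective compactness argument (Anselone, Dancer).

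For $\mathcal{R}e\,\lambda > -\mu$ the operator $D\frac{d^2}{dx^2}-(\mu+\lambda)$ with Neumann data is invertible, with Green's function $G_{\mu+\lambda}$. Eliminating $\bar m^\ve$ in \eqref{EW_problem_approx}, the eigenvalue problem reduces to $\bar p^\ve = K_\ve(\lambda,D)\bar p^\ve$ with
\begin{equation*}
K_\ve(\lambda,D)\varphi(x) = \alpha_m\alpha_p \int_0^1 g(z) G_{\mu+\lambda}(x,z) \int_0^1 G_{\mu+\lambda}(z,y) f'(p^\ast_\ve(y,D)) \delta^\ve_{x_M}(y) \varphi(y)\, dy\, dz .
\end{equation*}
As $\ve \to 0$, $K_\ve(\lambda,D)$ converges collectively compactly in $C([0,1])$ to the rank-one operator
\begin{equation*}
K_0(\lambda,D)\varphi(x) = \alpha_m\alpha_p\, f'(p^\ast_0(x_M,D))\, \varphi(x_M) \int_0^1 g(z) G_{\mu+\lambda}(x,z) G_{\mu+\lambda}(z,x_M)\, dz ,
\end{equation*}
whose only non-zero eigenvalue is obtained by evaluating at $x = x_M$. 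Hence the limit eigenvalue problem is equivalent to the scalar dispersion relation
\begin{equation}\label{disp_plan}
F(\lambda,D) := \alpha_m\alpha_p f'(p^\ast_0(x_M,D)) \int_0^1 g(z) G_{\mu+\lambda}(x_M,z) G_{\mu+\lambda}(z,x_M)\, dz - 1 = 0 .
\end{equation}

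Using the uniform a~priori bound on eigenvalues with $\mathcal{R}e\,\lambda \ge 0$ derived from \eqref{estim_spectrum_A}, the compactness of $(-\mathcal A_0)^{-1}$, and the convergence $p^\ast_\ve \to p^\ast_0$ in $C^1([0,1])$, collective compactness then yields a spectral correspondence: every simple root $\lambda_0$ of $F(\cdot,D)$ in the closed right half-plane is the limit of a simple eigenvalue $\lambda(\ve,D)$ of $\mathcal A(D)$ depending smoothly on $(\ve,D)$, and conversely every eigenvalue of $\mathcal A(D)$ in the closed right half-plane lies near a root of $F$ for small $\ve$. With the explicit hyperbolic expression of $G_{\mu+\lambda}$ in terms of $\theta_\lambda = ((\mu+\lambda)/D)^{1/2}$, the equation $F(i\omega,D) = 0$ splits into two real scalar equations in $(\omega,D)$; the numerical evidence of Figures~\ref{fig1}--\ref{fig4} localises exactly two solutions $(\omega^c_j, D^c_{j,0})$ with $\omega^c_j > 0$ at which no other eigenvalue lies on the imaginary axis. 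The implicit function theorem applied to the spectral correspondence then produces $D^c_{j,\ve}$ for small $\ve$, while transversality $\partial_D \mathcal{R}e\,\lambda(D^c_{j,\ve}) \neq 0$ and non-resonance follow from the properties of $F$, verified from its closed form.

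The main obstacle is that $\mathcal A_1(D)$ depends singularly on $\ve$ through $\delta^\ve_{x_M}$ and does not converge in operator norm as $\ve \to 0$, so classical analytic perturbation theory does not provide a spectrum correspondence uniformly in $\ve$. This is precisely where collectively compact convergence of $K_\ve \to K_0$, exploiting the compactness of $(-\mathcal A_0)^{-1}$ and the smoothing effect of the two successive convolutions against $G_{\mu+\lambda}$, replaces resolvent norm convergence and underwrites the simplicity and continuity statements required to apply the Hopf bifurcation theorem at each $D^c_{j,\ve}$.
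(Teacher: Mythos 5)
Your proposal follows essentially the same route as the paper: eliminate $\bar m^\ve$ to obtain a nonlocal eigenvalue problem for $\bar p^\ve$, pass to the rank-one limit operator as $\ve\to 0$ to get the scalar dispersion relation \eqref{ev_eq} evaluated at $x_M$, transfer the spectrum between the limit and $\ve>0$ problems by collective compactness \`a la Anselone--Dancer, locate the two purely imaginary roots numerically, and verify simplicity, transversality and non-resonance from the explicit hyperbolic form before invoking the classical Hopf theorem. The only cosmetic differences are that the paper packages the spectral correspondence as a separate degree-theoretic result (Theorem~\ref{theorem_ev}) rather than an implicit-function-theorem statement, and solves the dispersion relation directly by Newton's method rather than reading the critical values off the simulations.
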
 

\begin{proof} 
 For $\lambda \in \mathbb C$ such that $\mathcal Re (\lambda )> - \mu$  or $\mathcal Im (\lambda )\neq 0$ we can solve the first equation in the eigenvalue problem \eqref{EW_problem_approx}  for $\bar m^\ve$: 
 $$
\bar m^\ve(x) = \alpha_m (-\mathcal{\tilde A}_0)^{-1}\left(f^\prime( p^\ast_\ve(x, D)) \, \bar p^\ve(x) \,  \delta^\ve_{x_M}(x)\right)
$$
 and obtain 
 \begin{align}\label{EW_problem_approx_1}
\begin{aligned}
&\lambda \bar p^\ve = D \frac{d^2 \bar p^\ve}{dx^2}+ \alpha_p \alpha_m g(x) (-\mathcal{\tilde A}_0)^{-1}\left(f^\prime( p^\ast_\ve)\,  \bar p^\ve \, \delta^\ve_{x_M}\right) - \mu \, \bar p^\ve \; \quad \text{ in } \, (0,1)\; , \\
&\frac{d \bar p^\ve}{dx}(0)=\frac{d\bar p^\ve}{dx} (1) =0 \; .
\end{aligned}
\end{align} 
To determine the values of the parameter $D$ for which the stationary solution becomes unstable, i.e. the spectrum of $\mathcal A$ crosses the imaginary axis, we shall consider  $\lambda \in \sigma(\mathcal A)$ such that $\mathcal{R}e(\lambda) > - \mu $.   Thus $\lambda \notin \sigma(\mathcal A_0)$ and 
eigenvalue problems \eqref{EW_problem_approx} and \eqref{EW_problem_approx_1} are equivalent. 

To analyse the eigenvalue problem \eqref{EW_problem_approx_1}  further 
we  shall consider  the limit  problem obtained from \eqref{EW_problem_approx_1}  as $\ve \to 0$.
As in Dancer \cite{Dancer} we can show that for small $\ve$ the  stationary solution of \eqref{dynamic_problem} is stable  if the limit  eigenvalue problem as $\ve\to 0$
\begin{eqnarray}\label{EW_problem}
\begin{aligned}
&
\lambda \bar p = D \frac{d^2\bar p}{dx^2} - \mu \bar p + \alpha_p \alpha_m g(x)  G_{\lambda+\mu}(x, x_M)  f^\prime( p^\ast_0(x_M, D)) \bar p(x_M) \quad \text{ in } \, (0,1)\;, \\
&  \frac{d\bar p}{dx}(0)=\frac{d\bar p}{dx}(1) =0\; ,
\end{aligned}
\end{eqnarray}
has no eigenvalues with $\mathcal{R}e(\lambda) \geq 0$. 

Assume it is not true. 
Due to the upper bound for the spectrum of the operator $\mathcal A$,  shown previously, we obtain that a subsequence of eigenvalues of \eqref{EW_problem_approx_1} $\lambda_{\ve_j}$, with $\mathcal Re (\lambda_{\ve_j} )\geq 0$  and   $\ve_j \to 0$,
 converges to $\tilde \lambda$ with $\mathcal Re (\tilde  \lambda) \geq 0$ as $j \to \infty$. 
 
 For $\ve >0$, since $p^\ast_\ve \in H^2(0,1)$, $p^\ast_\ve(x,D)>0$ for $x\in [0,1]$, $D \in [d_1, d_2]$ and $f(p)$ is smooth and bounded for   nonnegative $p$, the  regularity theory implies that $(\bar m^\ve, \bar p^\ve)\in H^2(0,1)^2$ and we can normalise the solutions so that $\|\bar m^\ve\|_{L^2(0,1)} + \|\bar p^\ve\|_{L^2(0,1)}=1$.  
 From the equations in \eqref{EW_problem_approx} with $|\lambda| \leq 35$, using the normalisation and continuous embedding of $H^1(0,1)$ in $C([0,1])$,  we have estimates 
 $$
 \|\bar p^\ve\|_{H^1(0,1)} \leq C_1, \quad  \|\bar p^\ve\|_{H^2(0,1)} \leq C_2, \quad 
\|\bar m^\ve\|_{H^1(0,1)} \leq C_3\big(1+ \| \bar p^\ve \|_{H^1(0,1)}\big),
$$
where $C_1$, $C_2$ and  $C_3$   are independent of  $\ve$.
 Using compact embedding, we conclude  convergences, up to a subsequence,  
$\bar m^\ve \rightharpoonup \bar m$ weakly in $H^1(0,1)$ and strongly in $C([0,1])$ and $\bar p^\ve \rightharpoonup \bar p$ weakly  in $H^2(0,1)$  and strongly in $C^1([0,1])$.  

Additionally for $\lambda$ with $\mathcal{R}e(\lambda) \geq 0$, taking   $ \mathcal R e(\bar m^\ve) - i\mathcal I m(\bar m^\ve)$ as a test function in the first equation in  \eqref{EW_problem_approx}, using the regularity and boundedness of the stationary solution  and considering the real part of the equation  we obtain 
\begin{equation}\label{estim_ev_2}
D \left\|\frac{d \bar m^\ve}{ dx^2}\right\|^2_{L^2(0,1)}
+ \big[\mathcal Re(\lambda)+ \mu \big] \| \bar m^\ve \|^2_{L^2(0,1)} 
\leq \alpha_m
\|\bar m^\ve\|_{L^\infty(0,1)}\|\bar p^\ve\|_{L^\infty(0,1)}\; .
\end{equation}
The continuous embedding of $H^1(0,1)$ into $C([0,1])$ implies
\begin{equation}\label{cond_m_p_l}
\|\bar m^\ve\|_{L^\infty(0,1)} \leq C \|\bar p^\ve\|_{L^\infty(0,1)}\; .
\end{equation}
 Considering the strong convergence of $\bar p^{\ve_j}$ and $p^\ast_{\ve_j}$ in $C([0,1])$  and taking the limit as $j \to \infty$ in  \eqref{EW_problem_approx_1}  we obtain that $(\tilde \lambda, \bar p)$ satisfies the eigenvalue problem \eqref{EW_problem}. 
Since $\lambda$ with  $\mathcal{R}e(\lambda) \geq 0$  does not belong to $\sigma(\mathcal{\tilde  A}_0)$ we obtain  from  \eqref{EW_problem_approx_1}    that $|\bar p^\ve(x)| >0$ in $(x_M-\ve, x_M + \ve)$. Thus  due to the strong convergence of $\bar p^{\ve_j}$ in $C^1([0,1])$  we have that $\bar p(x_M) \neq 0$. Otherwise, since  for $\tilde \lambda$  with $\mathcal Re(\tilde \lambda)\geq 0$ yields $\tilde \lambda \notin \sigma(\mathcal{\tilde A}_0)$, we would obtain $ \bar p(x) = 0$ for all $x\in [0,1]$. The last result together with the estimate \eqref{cond_m_p_l} and convergence of $\bar m^\ve$ and $\bar p^\ve$ contradicts the normalisation  property $\|\bar m\|_{L^2(0,1)} + \|\bar p\|_{L^2(0,1)}=1$.
Thus
$\bar p(x) \neq 0$ in $(0,1)$ and  the problem \eqref{EW_problem}  has nontrivial solution for $\tilde \lambda$ with $\mathcal Re (\tilde  \lambda) \geq 0$. 
Therefore if there are eigenvalues of \eqref{EW_problem_approx_1} with nonnegative real part (equivalently eigenvalues with nonnegative real part of  \eqref{EW_problem_approx}) then such also exist for \eqref{EW_problem}.



Additionally using Theorem~\ref{theorem_ev} shown below or  Theorem 3 in Dancer \cite{Dancer}   we obtain that for an eigenvalue $\tilde \lambda$ of \eqref{EW_problem} with $\mathcal Re(\tilde \lambda) > - \mu$ or $\mathcal Im(\tilde \lambda)\neq 0$ there is an eigenvalue of 
\eqref{EW_problem_approx} near $\tilde \lambda$.

Therefore, if for some $D\in [d_1, d_2]$ the  problem \eqref{EW_problem} does not have eigenvalues with nonnegative real parts, then so also for the eigenvalues of \eqref{EW_problem_approx}. If for some $D\in [d_1, d_2]$ problem
 \eqref{EW_problem}   has an eigenvalue $\tilde \lambda$ with $\mathcal Re(\tilde \lambda) >0$ then for small $\ve>0$ we have in a neighbourhood of $\tilde \lambda$ eigenvalues of   \eqref{EW_problem_approx} with positive real part. 
 
We consider now the eigenvalue problem \eqref{EW_problem}.  Using the fact   $\lambda \notin\sigma(\mathcal{\tilde A}_0)$ and   applying  $(\lambda- \mathcal{\tilde A}_0)^{-1}$  in 
\eqref{EW_problem}  yields
\begin{eqnarray}\label{eigenfunction_p}
 \bar p(x) = \alpha_p \alpha_m f^\prime( p^\ast_0(x_M, D)) \bar p(x_M)  \int_{0}^1 g(y) G_{\lambda+\mu}(x,y)   G_{\lambda+\mu}(y, x_M)  dy    \; .
\end{eqnarray}
Considering  $x_M < l$, as well as   $g(x) = 0$ for $x < l$ and  $g(x)=1$ for $l\leq x \leq 1$ implies
\begin{eqnarray}\label{eigenfunction_p_2}
 \bar p(x) &=&
 \frac{\alpha_m \alpha_p f^\prime( p^\ast_0(x_M, D)) }{(\mu+\lambda) D \sinh^2(\theta_\lambda)} \bar p(x_M)  \cosh(\theta_\lambda x_M) \times  \\
&& \times  \Big[
\cosh(\theta_\lambda(1-x))\Big( \frac 12 \cosh(\theta_\lambda) y\Big|_{l}^x - \frac 1{4\theta_\lambda} \sinh(\theta_\lambda(1-2y))\Big|_{l}^x \Big)_{x>l} \nonumber \\
&& +  \cosh(\theta_\lambda x)
\Big(\frac12 y \Big|_{\max\{x, l\}}^1 -\frac 1{4\theta_\lambda} \sinh(2\theta_\lambda(1-y))\Big|^1_{\max\{x, l\}}\Big) \Big] \; ,  \nonumber
\end{eqnarray}
where $\theta_\lambda = \left((\mu + \lambda)/D\right)^{1/2}$. Then for $x=x_M< l$, where   $l=1/2$, we have 
\begin{equation}\label{eq:eigen_value1}
 \bar p(x_M) = \frac{\alpha_p\alpha_m} 4  \bar p(x_M) f^\prime( p^\ast_0(x_M,D))    \frac  {\cosh^2(\theta_\lambda \, x_M)}
  {D (\mu +\lambda) \,   \sinh^2(\theta_\lambda)}
\Big[1  + \frac 1{\theta_\lambda} \sinh(\theta_\lambda)\Big] \; .
\end{equation}

If $\bar p(x_M) =0$ and $\lambda \notin\sigma(\mathcal {\tilde A}_0)$  we have  $\bar p(x) =0$  for all $x\in (0,1)$. 
Therefore,  in the context of the analysis of the  instability of stationary solutions  of the model  \eqref{dynamic_problem}, i.e.  for  $\lambda \in \sigma(\mathcal A)$ such that $\mathcal Re(\lambda) \geq 0$, we can assume that $\bar p(x_M)\neq 0$.  
Now dividing both sides of the equation  \eqref{eq:eigen_value1} by $\bar p(x_M)$  we obtain  a nonlinear equation for eigenvalues $\lambda$  in terms of the  stationary solution  and  parameters in the model
\begin{eqnarray}\label{ev_eq}
 R(\lambda) &=&\frac{\alpha_p\alpha_m} 4 f^\prime( p^\ast_0(x_M, D))     {\cosh^2(\theta_\lambda \, x_M)} 
\big[\theta_\lambda  +  \sinh(\theta_\lambda)\big] -\theta_\lambda\,  D (\mu +\lambda) \,  \sinh^2(\theta_\lambda ) \nonumber \\
 &=& 0,  
\end{eqnarray}
where the value of the stationary solution $p^\ast_0(x_M, D)$ is defined by~\eqref{stationary_xM}.

The estimates \eqref{estim_spectrum_A} for $\mathcal A$ ensure that  $ \sigma(\mathcal A) \subset \{ \lambda \in \mathbb C: \, \mathcal Re(\lambda) \leq 35,  \, |\mathcal Im(\lambda)| \leq 35 \}$.  Since the operator $\mathcal A$ is real,  we can consider only eigenvalues with positive complex part and the corresponding  complex conjugate values will also be eigenvalues. Using Matlab and applying Newton's method  with  initial guesses in $[-5, 35]\times [0, 35]$ with step $0.001$ we solved equation~\eqref{ev_eq}  numerically  and  found two critical values of the bifurcation parameter  $D^c_1 \approx 3.117109 \times 10^{-4}$ and $D^c_2 \approx  7.884712 \times 10^{-3} $, for which we have a pair of purely imaginary eigenvalues  $\lambda_1^c \approx  17.6411537 \times 10^{-3}\, i$ and $\overline \lambda_1^c$  and   $\lambda_2^c \approx  51.2345925 \times 10^{-3}\, i$ and $\overline \lambda_2^c$ satisfying \eqref{ev_eq}.  We showed numerically  that  for  both critical values of the bifurcation  parameter     all other eigenvalues  of \eqref{EW_problem} have negative real part. From numerical simulations of the equation  \eqref{ev_eq},  we obtain  also that there are no eigenvalues   $\tilde \lambda$ of \eqref{EW_problem} with $\mathcal Re(\tilde \lambda) \geq 0$ for $D<D^c_1$  and for $D> D_2^c$, and there exist   eigenvalues  with positive real part for $D^c_1< D<D^c_2$. We also obtained  numerically  that  for $D$ such that  $D^c_1<D<D^c_2$ and close to the critical values, there is only one pair of complex conjugate eigenvalues with positive real part satisfying \eqref{ev_eq}. 

In addition to the numerical results we can  verify the simplicity of the  purely imaginary eigenvalues by  computing the derivative of $R(\lambda)$ in  \eqref{ev_eq} with respect to $\lambda$, evaluated at $\lambda^c_1$ and  $\lambda^c_2$:  
\begin{eqnarray*}
R^\prime(\lambda) &=&    
\frac{\alpha_m \alpha_p  f^\prime(p^\ast(x_M, D))  } {8(\mu+\lambda)^{1/2} D^{1/2}} 
\big[ \sinh(2\theta_\lambda x_M) x_M ( \theta_\lambda + \sinh(\theta_\lambda)) \\
&&+ \cosh^2(\theta_\lambda x_M) ( 1+ \cosh(\theta_\lambda))\big]
-\frac 12\big[ 3 D \theta_\lambda \sinh^2(\theta_\lambda) + ( \mu + \lambda) \sinh(2 \theta_\lambda)\big]\; .
\end{eqnarray*}
Simple  algebraic calculations  using Matlab (or Maple) give $R^\prime(\lambda^c_1)\approx 
-3.347 \times10^6 + 9.901 \times 10^5 i$, 
$R^\prime(\lambda^c_2)\approx 1.848 + 0.647i$ and thus  the simplicity of purely imaginary eigenvalues $\lambda^c_1$,  $\lambda^c_2$ of \eqref{EW_problem}.

To prove the transversality condition  we shall define the derivative of the eigenvalues with respect to the parameter $D$.
Differentiation of  \eqref{ev_eq} implies:
\begin{eqnarray*}
&& \frac {d\lambda } {dD}(D, \lambda)  =\Big( f^{\prime \prime} (p^\ast_0(x_M, D)) \frac{\partial  p^\ast_0(x_M, D)} {\partial D} 
\cosh ( \theta_\lambda x_M)\Big[ 1+ 
 \frac{\sinh\left(\theta_\lambda\right)}  {\theta_\lambda}  \Big]\\
 && 
  +\frac{ f^{\prime } (p^\ast_0) }{D}    \Big[   \cosh ( \theta_\lambda x_M) \Big[ 
  \frac{\theta_\lambda\cosh (\theta_\lambda)}{\sinh(\theta_\lambda)} 
     +  \frac{\cosh ( \theta_\lambda) }{2}-1 
     -    \frac { D^{\frac 12}\sinh(\theta_\lambda)  }{2 (\mu + \lambda)^{\frac 12}} 
  \Big]  \\
  && - x_M\sinh ( \theta_\lambda x_M)( \theta_\lambda + 
\sinh(\theta_\lambda))  \Big] \Big)  \Big( f^\prime ( p^\ast_0) 
 \Big[ \frac{\cosh ( \theta_\lambda x_M)}{2(\mu+ \lambda)} \Big[ 
  \frac {(\mu + \lambda)^{\frac12} \cosh ( \theta_\lambda)} { D^{\frac 12}\sinh(\theta_\lambda)}\\
  && +
  \cosh (\theta_\lambda)+ \frac {3  \sinh(\theta_\lambda) }{\theta_\lambda} +2  \Big]      -
 x_M \frac{\sinh ( \theta_\lambda x_M)}{ \mu + \lambda} \Big[ \frac { (\mu + \lambda)^{\frac 12}} { D^{\frac 12}} + \sinh(\theta_\lambda)\Big] \Big] \Big)^{-1} \; , 
\end{eqnarray*}
where $\theta_\lambda =  \big(\frac {\mu+ \lambda}  D\big)^{\frac 12}$. The derivative of the stationary solution with respect to the bifurcation parameter $D$  evaluated at $x_M$ is as follows: 
\begin{eqnarray*}
\frac{\partial p^\ast_0(x_M, D)}{\partial D} = \frac{\alpha_p \alpha_m} { 4\mu D^{\frac 32} } \,  \frac{\cosh (\theta x_M)\sinh^{-2}(\theta)} {1+ (h+1) (p^\ast_0(x_M, D))^{h}}  \Big(\cosh\big(\theta  \, x_M\big)
\Big[\frac { \mu^{\frac 12}} D  \frac { \cosh(\theta)} { \sinh (\theta)} \\
-  \frac{1}{D^{\frac 12}}
- \frac { \sinh (\theta)}{2 \mu^{\frac 12}} \nonumber  
+  \frac{ \cosh(\theta)}{2D^{\frac 12}} 
\Big] - x_M\, \sinh\big(\theta \,  x_M\big) \Big[ \frac { \mu^{\frac 12}} D + \frac{\sinh (\theta)} {D^{\frac 12} }  \Big] \Big) \; , 
\label{stat_sol_D}
\end{eqnarray*}
where $\theta = (\mu/D)^{\frac 12}$.  We evaluate the derivative $d \lambda /d D$  at the two critical parameter values  $D_1^c$ and $D_2^c$  and the corresponding  purely imaginary eigenvalues $\lambda^c_1$ and $\lambda^c_2$. The values obtained are:  
$$\dfrac{d \lambda}{d D}(D^c_1, \lambda^c_1) \approx 70.613 +47.159i  \, \text{ and  } \, \dfrac{d \lambda}{d D}(D^c_2, \lambda^c_2) \approx -0.681 + 1.696i.
$$
Thus $\mathcal{R}e\left( \frac{d\lambda }{d D}|_{D^c_j, \lambda^c_j} \right)\neq 0$ and  the eigenvalues $(\lambda^c_j(D), \overline{\lambda^c_j}(D))$ cross the imaginary axes with non-zero speed, where $j=1,2$.

Now we shall  show that all criteria for the existence of a local Hopf 
bifurcation \cite{Crandall,Ize,Kielhoefer} are satisfied by the system  \eqref{dynamic_problem} for small $\ve>0$.  Since 
for  $p\geq -\theta$, with $0<\theta <1$,   $f$ is a smooth function with respect to $p$,  we can write \eqref{dynamic_problem} as 
\begin{eqnarray*}
\partial_t \tilde u = \mathcal A \, \tilde u  +  F(\tilde u,D), 
\end{eqnarray*}
where $\tilde u  = (\tilde m, \tilde p)^T$ with  $\tilde m = m- m^\ast_\ve$, $\tilde p = p - p^\ast_\ve$,  and $F(\tilde u, D) =\alpha_m ((f(\tilde p + p^\ast_\ve) - f(p^\ast_\ve) - f^{\prime} (p^\ast_\ve) \tilde p)\delta^\ve_{x_M}(x), 0)^T$.

 We have that $\mathcal A = \mathcal A(D)$ is linear in $D$.
Since $p_\ve^\ast=p_\ve^\ast(D) $ is smooth function  for $D> 0$, we have $F \in C^2(U\times (\underline D, \overline D))$, for   $U \subset \mathbb R\times (-1, \infty)$, such that  $u^\ast_\ve=(m^\ast_\ve, p^\ast_\ve) \in U$,  and  some  $0<\underline D < d_1$ and $\overline D>d_2$. Additionally we have  $F(0,D) =0$, $\partial_{\tilde m} F(0,D) =0$,  and $\partial_{\tilde p} F(0,D) =0$ for $D\in(\underline D, \overline D)$. 

The properties of the  operator $\mathcal A_0$ and the assumption on the function $f$ ensure that 
$-\mathcal A$, as a bounded perturbation of a self-adjoint sectorial operator, is the infinitesimal generator of a strongly continuous analytic  semigroup $T(t)$ on $L^2(0,1)$ \cite{Kato,Pazy}, and $(\lambda I -\mathcal A)^{-1} $ is compact for $\lambda$ in the resolvent set of $\mathcal A$ for all  values of $D \in (\underline D, \overline D)$.

From  the analysis above and applying  Theorem~\ref{theorem_ev} shown below or  Theorem~3 in Dancer \cite{Dancer}  we can  conclude that  for small $\ve$, all eigenvalues $\lambda_\ve$ of  \eqref{EW_problem_approx}  have $\mathcal Re (\lambda_\ve) < 0$ for $D<D^c_{1}$ and $D> D^c_{2}$ and there exist  eigenvalues with $\mathcal Re(\lambda_\ve) >0$ for  $D^c_{1}< D<D^c_{2}$.
As shown before we have  $0 \notin \sigma(\mathcal A)$. 
This together with continuous dependence of eigenvalues on the parameter $D$ implies 
  that for small $\ve>0$ there are  two critical values of $D$, i.e $D^{c}_{1,\ve}$  and $D^{c}_{2,\ve}$, close to $D^c_1$ and $D^c_2$, for which we have 
a pair of purely imaginary eigenvalues for the original operator $\mathcal A$, i.e. solutions of the eigenvalue problem \eqref{EW_problem_approx}. We have also that  there  no eigenvalues of \eqref{EW_problem_approx}  with positive real part for $D=D^c_{j,\ve}$, where $j=1,2$. 

The simplicity of eigenvalues  $\lambda_{j}^c$  of  \eqref{EW_problem},  the fact that  \eqref{EW_problem} has only one  pair of   complex conjugates eigenvalues with positive real part  for  $D^c_1< D<D^c_{2}$, close to the critical values, continuous dependence of eigenvalues $\lambda$ and $\lambda_{\ve}$ on $D$ and $\ve$  together  with Theorem~\ref{theorem_ev} shown below or  Theorem 3 in   Dancer \cite{Dancer}  ensure  the simplicity of  $\lambda_{j, \ve}^c$ and $\overline\lambda_{j,\ve}^c$ 
as well as $\pm n \lambda_{j,\ve}^c \notin \sigma(\mathcal A)$, where $j=1,2$.

The transversality property of $\lambda_{j}^c$ and  the fact that $\lambda_{j,\ve} (D)$ are isolated (as zeros of an analytic function with respect to   $D$  and $\lambda$, see proof of Theorem~\ref{theorem_ev}) imply that for small $\ve > 0$   the eigenvalues $\lambda^c_{j,\ve}(D)$ and $\overline{\lambda^c_{j,\ve}}(D)$ of the problem \eqref{EW_problem_approx} cross the real line with non-zero speed as the bifurcation parameter $D$ increases, where $j=1,2$.

Then  the Hopf Bifurcation Theorem, see e.g. Crandall \& Rabinowitz \cite{Crandall},   Ize \cite{Ize},  Kielh\"ofer \cite{Kielhoefer},  ensures the existence in the neighbourhood of  $(m^\ast_\ve, p^\ast_\ve, D_{j, \ve}^c)$ of a one-parameter  family of  periodic solutions  of the nonlinear system \eqref{dynamic_problem}, bifurcating from the stationary solution  starting from 
$(m^\ast_\ve, p^\ast_\ve, D_{j, \ve}^c, T_j^0)$, where $T_j^0=2\pi/ \mathcal{I}m(\lambda_{j, \ve}^c)$ with $j=1,2$, and the period  is a continuous function of  $D$. 
\end{proof}

We shall define 
\begin{equation}\label{A_tilde}
\mathcal{\tilde A} = \begin{pmatrix}
D \frac{d^2}{dx^2} -\mu& \qquad   \alpha_m f^\prime( p^\ast_0(x, D))  \, \delta_{x_M}(x) \\
\alpha_p g(x) &\qquad  D \frac{d^2}{dx^2}-\mu
\end{pmatrix} \;  .
\end{equation}
In a manner similar to  Theorem~3 in  Dancer \cite{Dancer},  we can show for the eigenvalue problem \eqref{EW_problem_approx} the following result: 
\begin{theorem} (cf. Dancer \cite{Dancer})\label{theorem_ev}
For small $\ve>0$ we have that if $\tilde \lambda $ is an eigenvalue of \eqref{EW_problem} with $\mathcal{R}e(\tilde \lambda) > -\mu $, then there is an eigenvalue $\lambda_\ve$ of \eqref{EW_problem_approx} with $\lambda_\ve$ near $\tilde \lambda$ and $\lambda_\ve \to \tilde \lambda$ as $\ve \to \infty$.
The same result holds for $\mathcal Re(\tilde \lambda) \leq -\mu$ with $\mathcal I m(\tilde \lambda)\neq 0$.
\end{theorem}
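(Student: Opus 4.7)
The strategy is to recast both eigenvalue problems as fixed-point equations $(I - T_\bullet(\lambda))\bar p = 0$ for analytic compact operator-valued functions of $\lambda$ on $C([0,1])$, and then to apply collective compactness (cf.\ Anselone \cite{Anselone}, Dancer \cite{Dancer}) together with an analytic-Fredholm / operator-valued Rouch\'e argument to transfer an isolated characteristic value of the limit family to a nearby one of the $\ve$-family. For $\lambda \in \Omega := \{\mathcal{R}e(\lambda) > -\mu\} \cup \{\mathcal{I}m(\lambda) \neq 0\}$ we have $\lambda \notin \sigma(\mathcal{\tilde A}_0)$, so $R_\lambda := (\lambda - \mathcal{\tilde A}_0)^{-1}$ is bounded on $C([0,1])$ with continuous kernel $G_{\lambda+\mu}$. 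Eliminating $\bar m$ as in the passage from \eqref{EW_problem_approx} to \eqref{EW_problem_approx_1} gives the equivalent scalar equation $(I - T_\ve(\lambda))\bar p^\ve = 0$ with
\begin{equation*}
T_\ve(\lambda)\varphi := \alpha_m\alpha_p\, R_\lambda\bigl(g\, R_\lambda\bigl(f^\prime(p^\ast_\ve)\,\delta^\ve_{x_M}\,\varphi\bigr)\bigr),
\end{equation*}
while the same reduction of \eqref{EW_problem} produces the rank-one operator
\begin{equation*}
T_0(\lambda)\varphi := \alpha_m\alpha_p\, f^\prime(p^\ast_0(x_M,D))\,\varphi(x_M)\, R_\lambda\bigl(g\, G_{\lambda+\mu}(\cdot,x_M)\bigr).
\end{equation*}
Both operators are compact on $C([0,1])$ (their ranges lie in $C^1([0,1])$) and depend analytically on $\lambda \in \Omega$.

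Second, I would prove that $T_\ve(\lambda) \to T_0(\lambda)$ in operator norm on $C([0,1])$, uniformly in $\lambda$ on compact subsets of $\Omega$. The normalisation $\int \delta^\ve_{x_M} = 1$, the shrinking support in $(x_M - \ve, x_M + \ve)$, joint continuity of $G_{\lambda+\mu}$ on $[0,1]^2$, and the uniform convergence $f^\prime(p^\ast_\ve) \to f^\prime(p^\ast_0)$ in $C([0,1])$ established earlier in the excerpt together imply
\begin{equation*}
\sup_{\|\varphi\|_\infty \leq 1}\Bigl\|\, R_\lambda\bigl(f^\prime(p^\ast_\ve)\,\delta^\ve_{x_M}\,\varphi\bigr) - f^\prime(p^\ast_0(x_M))\,\varphi(x_M)\,G_{\lambda+\mu}(\cdot,x_M)\Bigr\|_{C([0,1])} \to 0,
\end{equation*}
and composition with the bounded operator $\psi \mapsto R_\lambda(g\psi)$ upgrades this to $T_\ve \to T_0$ in operator norm; in particular $\{T_\ve(\lambda)\}_{\ve > 0}$ is collectively compact in the sense of Anselone \cite{Anselone}.

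Third, I would apply an operator-valued Rouch\'e argument. By \eqref{ev_eq}, the characteristic values of $I - T_0(\lambda)$ in $\Omega$ are precisely the isolated zeros of the scalar analytic function $R(\lambda)$. Choose a circle $\Gamma \subset \Omega$ about $\tilde\lambda$ enclosing no other zero of $R$; then $I - T_0(\lambda)$ is invertible on $\Gamma$, and by the operator-norm convergence above so is $I - T_\ve(\lambda)$ for all sufficiently small $\ve$. The Gohberg--Sigal theorem for analytic operator-valued functions (equivalently the argument of Theorem~3 in Dancer \cite{Dancer}) preserves the total algebraic multiplicity of characteristic values inside $\Gamma$; hence $I - T_\ve(\lambda)$ has characteristic values inside $\Gamma$, and since $I - T_\ve(\lambda)$ is identity-plus-compact these are genuine eigenvalues $\lambda_\ve$ of \eqref{EW_problem_approx}. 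Shrinking $\Gamma$ gives $\lambda_\ve \to \tilde\lambda$ as $\ve \to 0$. The second assertion (when $\mathcal{R}e(\tilde\lambda) \leq -\mu$ but $\mathcal{I}m(\tilde\lambda) \neq 0$) needs no modification, since $\lambda \notin \sigma(\mathcal{\tilde A}_0) \subset \mathbb R$ still holds.

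The main obstacle I anticipate is the \emph{operator-norm} (rather than merely strong) convergence $T_\ve \to T_0$ on $C([0,1])$, uniformly in $\lambda$ on compacta: one must control $\sup_{\|\varphi\|_\infty \leq 1}$ of the convolution-type error, using uniform continuity of $G_{\lambda+\mu}(y,\cdot)$ near $x_M$ together with the $L^1$-normalisation and shrinking support of $\delta^\ve_{x_M}$. Pointwise convergence is immediate from $\delta^\ve_{x_M} \rightharpoonup \delta_{x_M}$ against continuous test functions, and the uniform-in-$\varphi$ upgrade exploits equicontinuity of the kernels together with the fact that $T_0(\lambda)$ is finite rank.
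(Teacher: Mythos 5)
Your overall architecture (recast both eigenvalue problems as $(I-T_\bullet(\lambda))\bar p=0$ with compact, analytic operator families on $C([0,1])$, then transfer isolated characteristic values from the limit family to the $\ve$-family while preserving multiplicity) is the same as the paper's. But the key technical lemma you rely on --- \emph{operator-norm} convergence $T_\ve(\lambda)\to T_0(\lambda)$ on $C([0,1])$, uniformly on compacta --- is false, and your Gohberg--Sigal/Rouch\'e step as stated collapses without it. To see the failure, take $\varphi_\ve\in C([0,1])$ with $\|\varphi_\ve\|_\infty\le 1$, $\varphi_\ve(x_M)=0$ and $\varphi_\ve\equiv 1$ on $\{y:\ve^2\le|y-x_M|\le\ve\}$. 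Since $\delta^\ve_{x_M}$ has unit mass on $(x_M-\ve,x_M+\ve)$ and carries only $O(\ve)$ of it on $|y-x_M|\le\ve^2$, one gets $\int_0^1\delta^\ve_{x_M}\varphi_\ve\,dy\to 1$ while $T_0(\lambda)\varphi_\ve=0$; hence
\begin{equation*}
\|T_\ve(\lambda)-T_0(\lambda)\|\;\ge\;\alpha_m\alpha_p\,\bigl|f^\prime(p^\ast_0(x_M,D))\bigr|\,\bigl\|R_\lambda\bigl(g\,G_{\lambda+\mu}(\cdot,x_M)\bigr)\bigr\|_\infty+o(1)\;\not\to\;0 .
\end{equation*}
The obstruction is not the kernels (which are equicontinuous, as you note) but the test functions: the unit ball of $C([0,1])$ is not equicontinuous at $x_M$, so weak-$*$ convergence $\delta^\ve_{x_M}\rightharpoonup\delta_{x_M}$ cannot be upgraded to uniformity over that ball, and the finite rank of $T_0$ does not help. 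This is precisely why convergence to a rank-one ``point-evaluation'' limit is the textbook situation where norm convergence fails and collective compactness is the correct substitute.

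The repair is the route the paper actually takes. One keeps only (i) collective compactness of $\{(-\mathcal{\tilde A}_0)^{-1}W_\ve(\lambda)\}$ and (ii) strong (pointwise) convergence $(-\mathcal{\tilde A}_0)^{-1}W_\ve(\lambda_\ve)h\to(-\mathcal{\tilde A}_0)^{-1}W_0(\lambda)h$ for each fixed $h$. Since Gohberg--Sigal needs uniform convergence on the contour, one cannot apply it directly; instead one performs a Lyapunov--Schmidt reduction: split $E=\mathcal N\oplus M$ and $E=\mathcal R\oplus Y$ relative to $I-(-\mathcal{\tilde A}_0)^{-1}W_0(\tilde\lambda)$, prove via Anselone-type arguments that $Q(I-(-\mathcal{\tilde A}_0)^{-1}W_\ve(\lambda))|_M$ is invertible with uniformly bounded inverse for $\lambda$ near $\tilde\lambda$ and $\ve$ small, and reduce the eigenvalue equation to $Z_\ve(\lambda)k=0$ on the finite-dimensional space $\mathcal N$. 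On $\mathcal N$ strong convergence \emph{is} norm convergence, so $\det Z_\ve\to\det Z_0$ uniformly near $\tilde\lambda$, and homotopy invariance of the degree (the scalar argument principle) of the analytic function $\det Z_0$ yields nearby zeros of $\det Z_\ve$ with the same total multiplicity. Your first and third steps survive intact once the second step is replaced in this way; only the finite-dimensional reduction is missing from your write-up, and it is essential rather than cosmetic.
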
 

\begin{proof}
The proof follows the same steps as in Theorem 3 and Lemma 4 of Dancer \cite{Dancer}. 
In a manner similar to  Anselone \cite{Anselone} and Dancer \cite{Dancer}, the collective compactness of a set of operators is  used to show the result of the theorem. 
Note that for $\lambda$ with $\mathcal Im(\lambda) \neq 0$  or  for  real  $\lambda$  with $\lambda> -\mu$ the operator  $\mathcal{\tilde  A}_0 -\lambda= D\frac{d^2}{dx^2} - \mu- \lambda$ with zero Neumann boundary conditions  is  invertible. Thus 
we have that $\lambda$ is an eigenvalue of problem \eqref{EW_problem_approx} if it is an eigenvalue of 
\eqref{EW_problem_approx_1}. We denote 
$$W_\ve(\lambda) h =  \alpha_p \alpha_m g(x) \int_0^1 G_{\lambda+ \mu}(x,y)f^\prime( p^\ast_\ve)  \delta^\ve_{x_M}(y) h(y) dy - \lambda h \, \text{ for }  h \in E=C([0,1]),$$ and 
 shall prove that  for $\lambda \in T= \{ \lambda\in \mathbb C ,\,  \mathcal Re(\lambda) \geq - \mu+\vartheta, \,   |\lambda| \leq \Theta \} $,  for some $\Theta \geq  35$,   $0<\vartheta<\mu/2$, and  $\ve>0$ small, $(-\mathcal{\tilde A}_0)^{-1} W_\ve(\lambda)$
 is a collectively compact set of operators on $E$ and converges pointwise to $(-\mathcal{\tilde A}_0)^{-1} W_0(\lambda)$ as $\ve \to 0$, i.e.
 $$
 (-\mathcal{\tilde A}_0)^{-1} W_\ve(\lambda_\ve) h \to (-\mathcal{\tilde A}_0)^{-1} W_0(\lambda) h
 $$
as $\ve \to 0$, for every $h \in E$,  if $\lambda_\ve \to \lambda$ as $\ve \to 0$.
Here 
$$
W_0(\lambda)h  = \alpha_p \alpha_m g(x)  G_{\lambda+ \mu} (x, x_M)  f^\prime( p^\ast_0(x_M)) h(x_M) - \lambda h. 
$$
From the definition of $W_\ve(\lambda)$ and the properties of the function $f$ and  the Dirac sequence, as well as positivity   of the stationary solution $p^\ast_\ve$,    follows the boundedness of $W_\ve$ on $E$, i.e.
$$
\|W_\ve(\lambda) h\|_E \leq C \|h\|_E \qquad \text{ for all } \, \,  \lambda \in T, 
$$
with a constant $C$ independent of $\ve$. Then the compactness of $(-\mathcal{\tilde A}_0)^{-1}$ implies  the collective compactness of $(-\mathcal{\tilde A}_0)^{-1} W_\ve(\lambda)$ for $\lambda \in T$. 
For $h \in E$ and $\lambda_\ve \to \lambda$ as $\ve \to 0$,  using strong convergence of $p^\ast_\ve$ in $C([0,1])$, we have that
$W_\ve(\lambda_\ve)h \rightharpoonup W_0(\lambda)h$ weakly in $L^2(0,1)$. 
By the regularity of $\mathcal{\tilde A}_0$ we have that 
$ (-\mathcal{\tilde A}_0)^{-1} W_\ve(\lambda_\ve) h \rightharpoonup (-\mathcal{\tilde A}_0)^{-1} W_0(\lambda) h$ weakly in $H^2(0,1)$ and thus, by  the compact embedding of $H^2(0,1)$ into $C([0,1])$, it follows that 
$ (-\mathcal{\tilde A}_0)^{-1} W_\ve(\lambda_\ve) h \to (-\mathcal{\tilde A}_0)^{-1} W_0(\lambda) h$ strongly in $E$.  Notice that $W_\ve(\lambda)$ and $W_0(\lambda)$, for  $\mathcal{R}e(\tilde \lambda) > -\mu $ or $\mathcal{I}m(\tilde \lambda) \neq 0$, depend analytically on $\lambda$, i.e.   as  products and compositions of analytic functions in $\lambda$.

Since $ (-\mathcal {\tilde A}_0)^{-1} W_0(\lambda)$ is compact,  we have that  $I-(-\mathcal{\tilde A}_0)^{-1} W_0(\lambda) $ is a Fredholm operator with index zero, see e.g. Brezis \cite{Brezis}. Using the theory of Fredholm operators, for $\tilde \lambda$ such that   $I-(-\mathcal{\tilde A}_0)^{-1} W_0(\tilde \lambda) $ is not invertible, there exist closed subspaces $M$ and  $Y$ of $E$  such that $E= \mathcal N\oplus M$ and $E=\mathcal R \oplus Y$, where $\mathcal N = \mathcal N(I-(-\mathcal{\tilde A}_0)^{-1} W_0(\tilde \lambda)) $ and $\mathcal R=\mathcal R(I-(-\mathcal{\tilde  A}_0)^{-1} W_0(\tilde \lambda))$,  for which   $\text{dim}(Y)=\text{dim}(\mathcal N)$. 
Let $Q: E\to \mathcal R$ be the projection onto $\mathcal R$  parallel to $Y$.

Now we shall prove that $Q(I-(-\mathcal{\tilde A}_0)^{-1} W_\ve(\lambda) ):M \to \mathcal R$ is invertible  if $\lambda$ is near $\tilde \lambda$ and $\ve$ is small. 
Since $Q(I-(-\mathcal{\tilde A}_0)^{-1} W_\ve(\lambda) )= Q(I-(-\mathcal{\tilde A}_0)^{-1} W_0(\lambda) )- Q((-\mathcal{\tilde A}_0)^{-1} W_\ve(\lambda) -  (-\mathcal{\tilde A}_0)^{-1} W_0(\lambda) )$, this is a compact perturbation of a Fredholm operator of index zero and hence is a Fredholm operator of index zero, see e.g. Brezis \cite{Brezis}.  Then invertibility will follow if we show that $Q(I-(-\mathcal{\tilde A}_0)^{-1} W_\ve(\lambda) )$
has no kernel on $M$ for small  $\ve$ and $\lambda$ near $\tilde \lambda$.  We shall prove this by contradiction. 
Suppose that  for a sequence $(\ve_j, \lambda_{\ve_j})$ such that $\lambda_{\ve_j} \to \tilde \lambda$ and $\ve_j \to 0$ as $j \to \infty$, 
 there exists $z_{\ve_j} \in M$ with  $\|z_{\ve_j}\|=1$ and 
 \begin{equation}\label{kernel}
 Q(I-(-\mathcal{\tilde A}_0)^{-1} W_{\ve_j}(\lambda_{\ve_j}) ) z_{\ve_j} = 0 \; . 
 \end{equation}
 Due to the collective compactness property,  $(-\mathcal{\tilde A}_0)^{-1} W_{\ve_j}(\lambda_{\ve_j}) -  (-\mathcal{\tilde A}_0)^{-1} W_0(\tilde \lambda) $
is compact and thus  a subsequence  of $\big[(-\mathcal{\tilde A}_0)^{-1} W_{\ve_j}(\lambda_{\ve_j}) -  (-\mathcal{\tilde A}_0)^{-1} W_0(\tilde \lambda)\big] z_{\ve_j} $ converges strongly  in $E$.  The latter together with the equality  \eqref{kernel} ensures  that $Q(I-(-\mathcal{\tilde A}_0)^{-1} W_0(\tilde \lambda) )z_{\ve_j}$ converges in $E$. By  invertibility of $Q(I-(-\mathcal{\tilde A}_0)^{-1} W_0(\tilde \lambda) )|_M$ we have that $z_{\ve_j} \to z$  in $E$ as $j \to \infty$ and, since $M$ is closed,  $z\in M$ with $\|z\|=1$. 
We can rewrite 
$(-\mathcal{\tilde A}_0)^{-1} W_{\ve_j}(\lambda_{\ve_j}) z_{\ve_j} = (-\mathcal{\tilde A}_0)^{-1} W_{\ve_j}(\lambda_{\ve_j})z + 
(-\mathcal{\tilde A}_0)^{-1} W_{\ve_j}(\lambda_{\ve_j})(z_{\ve_j} - z)$. 
Using the convergence of $z_{\ve_j}$, and the uniform boundedness and  collective compactness of $(-\mathcal{\tilde A}_0)^{-1} W_{\ve_j}(\lambda_{\ve_j})$ we obtain that 
 $(-\mathcal{\tilde A}_0)^{-1} W_{\ve_j}(\lambda_{\ve_j}) z_{\ve_j}\to (-\mathcal{\tilde A}_0)^{-1} W_0(\tilde \lambda) z$  in $E$ as $j \to \infty$.
Thus we can pass to the limit in \eqref{kernel}  and obtain that $z \in \mathcal N$. This implies the contradiction since  $z\in M$ and $\|z\|=1$.  Therefore $Q(I-(-\mathcal{\tilde  A}_0)^{-1} W_\ve(\lambda) )|_M$ is invertible for $\lambda$ close to $\tilde \lambda$ and small $\ve$.

The  convergence of $Q(I-(-\mathcal{\tilde A}_0)^{-1} W_{\ve_j}(\lambda_{\ve_j}))z_{\ve_j}$  as well as  invertibility of $Q(I-(-\mathcal{\tilde  A}_0)^{-1} W_{\ve_j}(\lambda) )|_M$ and of $Q(I-(-\mathcal{\tilde  A}_0)^{-1} W_0(\tilde \lambda) )|_M$ ensure  that there exist $\kappa>0$, $j_0>0$ and $\delta>0$, independent of $\ve$  and $\lambda$, such that 
$$
\inf\{ \|Q(I-(-\mathcal{\tilde A}_0)^{-1} W_{\ve_j}(\lambda) ) z\|, \, \, z \in M, \, \, \|z \|=1,\,\,   j\geq j_0, \,\,   |\lambda - \tilde \lambda | \leq \delta \} \geq \kappa \; .
$$
Thus we have uniform boundedness of operators  $\big(Q(I-(-\mathcal{\tilde A}_0)^{-1} W_{\ve_j}(\lambda))\big)^{-1}$ mapping from $\mathcal R$  to $M$ by $\kappa^{-1}$ for $\lambda$ close to $\tilde \lambda$ and $j\geq  j_0$.
The collective compactness property of  $(-\mathcal{\tilde A}_0)^{-1} W_{\ve_j}$ and uniform boundedness of $\big(Q(I-(-\mathcal{\tilde A}_0)^{-1} W_{\ve_j}(\lambda))\big)^{-1}$ imply also that for $h \in \mathcal R$ we have  
$\big(Q(I-(-\mathcal{\tilde A}_0)^{-1} W_{\ve_j}(\lambda_{\ve_j}))\big)^{-1} h \to \big(Q(I-(-\mathcal{\tilde A}_0)^{-1} W_0(\tilde \lambda) )\big)^{-1} h$ as $j \to \infty$, $\ve_j \to 0$ and $\lambda_{\ve_j} \to \tilde \lambda$, see Theorem I.6 in  Anselone \cite{Anselone}. 

Now for $\bar p^\ve= m+k \in E$ with $k \in \mathcal N$ and $m \in M$ we rewrite   the eigenvalue equation  in \eqref{EW_problem_approx_1} as $(I-(-\mathcal{\tilde A}_0)^{-1} W_{\ve_j}(\lambda) )(m+k) =0$   and applying projection operator obtain 
$Q(I-(-\mathcal{\tilde A}_0)^{-1} W_{\ve_j}(\lambda) )m = - Q(I-(-\mathcal{\tilde A}_0)^{-1} W_{\ve_j}(\lambda) )k$.
The invertibility of $\big(Q(I-(-\mathcal {\tilde A}_0)^{-1} W_{\ve_j}(\lambda) )\big)$ on $M$ implies
$$m = -\left[Q(I-(-\mathcal {\tilde A}_0)^{-1} W_{\ve_j}(\lambda) )\right]^{-1}\left( Q(I-(-\mathcal{\tilde A}_0)^{-1} W_{\ve_j}(\lambda) )k \right)= S_{\ve_j}(\lambda) k. $$
  By arguments from above  $S_{\ve_j}(\lambda)$ are uniformly bounded with respect to $\ve_j$ and $\lambda$ for all $\lambda$ close to $\tilde \lambda$ and $j \geq j_0$ and also   for each $k \in \mathcal N$
we have $S_{\ve_j}(\lambda_{\ve_j}) k \to S_0(\tilde\lambda) k$ if $\lambda_{\ve_j} \to \tilde \lambda$ and $\ve_j \to 0$ as $j \to \infty$. 

Thus the  eigenvalue problem \eqref{EW_problem_approx_1} is reduced to  
$$Z_{\ve_j}(\lambda) k = 0  \,  \text{ for }   k \in \mathcal N, \,   \text{ where } \, 
Z_\ve(\lambda) = (I-Q)\left(I - (-\mathcal {\tilde A}_0)^{-1} W_\ve(\lambda)\left(I+ S_\ve(\lambda)\right)\right).
$$
Due to collective compactness of $(-\mathcal {\tilde A}_0)^{-1} W_{\ve_j}(\lambda)$ and convergence of $S_{\ve_j}(\lambda_{\ve_j})$ we have that  $Z_{\ve_j}(\lambda_{\ve_j}) k \to Z_0(\lambda) k $ if $\lambda_{\ve_j} \to \lambda$  and $\ve_j \to 0$ as $j \to \infty$ for each fixed $k \in \mathcal N$.
Since $\mathcal N$ is finite dimensional it follows that 
$$
\|Z_{\ve_j}(\lambda_{\ve_j})  - Z_0(\lambda) \|\to 0 \qquad \text{ as } \quad  j \to \infty\; .
$$
Now the equation for eigenvalues is given by $\det Z_{\ve_j}(\lambda)=0$. 
The analyticity of  $W_0(\lambda)$  implies that  $Z_0(\lambda)$ is  analytic in $\lambda$. 
Since $\mathcal{\tilde A}$ has compact resolvent as relative bounded perturbation of the operator $\mathcal A_0$ with compact resolvent, we have that   spectrum  of $\mathcal{\tilde A}$ is discrete and consists of eigenvalues, see e.g. Kato \cite{Kato}. 
This implies that   $Z_0(\lambda)$ is invertible for some $\lambda \in T$ and, thus  $\det Z_0(\lambda)$  does not vanish identically on $T$ and  its zeros are isolated, i.e. $\tilde \lambda$ is an isolated zero of the  analytic   function $\det Z_0(\lambda)$. 
Therefore the topological degree \cite{Nirenberg} of $\det Z_0(\lambda)$ is positive  in the neighbourhood of $\tilde \lambda$.
Using the uniform convergence  $\det Z_{\ve_j}(\lambda_{\ve_j}) \to \det Z_0(\lambda)$ as $j \to \infty$ and homotopy invariance of the topological degree, this implies that  the degree of $\det Z_{\ve_j}(\lambda_{\ve_j})$ is equal to the degree of  $\det Z_0(\lambda)$ and is positive in the neighbourhood of $\tilde \lambda$ and small $\ve_j$.  Thus for small $\ve$ it follows that $\det Z_\ve(\lambda_\ve)$ has a solution near $\tilde \lambda$ and hence $\mathcal A$ has an eigenvalue near $\tilde \lambda$. 
 
Since $W_\ve(\lambda)$ and $W_0(\lambda)$ are analytic in $\lambda$, the sum of multiplicities of the eigenvalues of $ \mathcal A$ near $\tilde \lambda$ is equal to the multiplicity of the eigenvalue $\tilde \lambda$ of \eqref{EW_problem} \cite{Dancer}.
\end{proof}


\section{Stability of the Hopf Bifurcation}
In this section we shall analyse the stability of periodic orbits bifurcating from the stationary solution at the two critical values of the bifurcation parameter, $D_{1, \ve}^c$ and $D_{2, \ve}^c$. 
To show the stability of the Hopf bifurcation we shall use techniques from weakly nonlinear analysis.
 The method of nonlinear analysis distinguishes between  fast  and  slow  time scales in   the dynamics of  solutions near the steady state. The fast time scale corresponds to the interval of time where the linearised stability analysis is valid, whereas at  the slow time scale  the effects of the nonlinear terms become important. 
  
 \begin{theorem} 
At both critical values of the bifurcation parameter,  $D^c_{1, \ve}$ and $D^c_{2, \ve}$, a supercritical Hopf bifurcation occurs  in the system \eqref{dynamic_problem} and  the family  of periodic orbits bifurcating from the stationary solution at each Hopf bifurcation point is stable. 
\end{theorem}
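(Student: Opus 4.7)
The plan is to reduce the infinite-dimensional dynamics near each critical point $(u^\ast_\ve, D^c_{j,\ve})$ to a scalar amplitude equation of Stuart--Landau type and to read off the direction and stability of the bifurcating branch from the sign of the cubic coefficient. I will work on the slow manifold generated by the simple complex conjugate pair $\lambda^c_{j,\ve}, \overline{\lambda^c_{j,\ve}} = \pm i\omega_j$ obtained in Theorem~\ref{Hopf_existence}; the facts that these are simple, isolated in $\sigma(\mathcal A)$, and that no integer multiple $\pm n\lambda^c_{j,\ve}$ lies in $\sigma(\mathcal A)$ (established in the previous proof) are precisely the nonresonance conditions that allow the standard center-manifold/normal-form reduction of Hassard--Kazarinoff--Wan or, equivalently, the multiple-scales expansion of Matkowski.

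First I introduce a small amplitude parameter $\delta$ and write
\begin{equation*}
D = D^c_{j,\ve} + \delta^2 D_2, \qquad u - u^\ast_\ve = \delta u_1 + \delta^2 u_2 + \delta^3 u_3 + O(\delta^4),
\end{equation*}
together with slow times $t_1 = \delta t$, $t_2 = \delta^2 t$, and seek $u_k$ as functions of the fast time $t$ and of $t_2$. At order $\delta$ the homogeneous linear problem yields $u_1 = A(t_2)e^{i\omega_j t}\phi_j(x) + \text{c.c.}$, where $\phi_j = (\phi_j^{(1)}, \phi_j^{(2)})^T$ is the eigenfunction of $\mathcal A$ at $D = D^c_{j,\ve}$ (obtained explicitly from (\ref{eigenfunction_p_2}) via the Green's function). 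To build the Fredholm alternative I also need the adjoint eigenfunction $\phi_j^\ast$ associated with the formal adjoint $\mathcal A^\ast$, which differs from $\mathcal A$ only by interchanging the off-diagonal coefficients $\alpha_m f'(p^\ast_\ve)\delta^\ve_{x_M}(x)$ and $\alpha_p g(x)$; normalise so that $\langle \phi_j^\ast, \phi_j\rangle_X = 1$.

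At order $\delta^2$ the Taylor expansion of $\tilde f(u)$ around $u^\ast_\ve$ produces terms proportional to $e^{0\cdot it}$ and $e^{2i\omega_j t}$ driven by $\alpha_m f''(p^\ast_\ve)\delta^\ve_{x_M}(x)$; by the nonresonance conditions the operators $\mathcal A$ and $\mathcal A - 2i\omega_j I$ are invertible on the corresponding Fourier components, so $u_2$ is uniquely determined as $u_2 = |A|^2 \psi_0(x) + A^2 e^{2i\omega_j t}\psi_2(x) + \text{c.c.}$, each $\psi_k$ being given explicitly by applying the resolvent Green's function to the known right-hand sides. At order $\delta^3$ the inhomogeneity contains a resonant component proportional to $e^{i\omega_j t}$ built from (i) the detuning $D_2$ acting through $\mathcal A'(D^c_{j,\ve})u_1$, (ii) cross terms $u_1 u_2$ through $f''$, and (iii) the pure cubic through $\alpha_m f'''(p^\ast_\ve)\delta^\ve_{x_M}(x)$. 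Projecting onto $\phi_j^\ast$ and using $\mathcal{R}e(d\lambda/dD)|_{D^c_j} \neq 0$ (already verified), the Fredholm alternative yields the Stuart--Landau equation
\begin{equation*}
\frac{dA}{dt_2} = \sigma_j D_2\, A + \kappa_j\, |A|^2 A, \qquad \sigma_j := \left.\frac{d\lambda}{dD}\right|_{D^c_{j,\ve}},
\end{equation*}
with $\kappa_j$ an explicit integral involving $\phi_j$, $\phi_j^\ast$, $\psi_0$, $\psi_2$, $f''(p^\ast_\ve)$ and $f'''(p^\ast_\ve)$.

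The direction and stability of the branch are then governed by the signs of $\mathcal{R}e(\sigma_j)$ and $\mathcal{R}e(\kappa_j)$: if $\mathcal{R}e(\kappa_j) < 0$ the bifurcation is supercritical and the bifurcating periodic orbit is orbitally asymptotically stable in view of the Hopf bifurcation theorem combined with the sectoriality of $-\mathcal A$ already established before Theorem~\ref{Hopf_existence} (so that the standard center-manifold reduction applies in $X^s$). The final step is therefore to evaluate $\kappa_1$ and $\kappa_2$ at the two critical values with the numerical parameters $\alpha_m=1$, $\alpha_p=2$, $\mu=0.03$, $h=5$, $x_M=0.1$, $l=1/2$, using the closed-form Green's function and the explicit formulas for $\psi_0$, $\psi_2$ obtained above, and to check numerically that $\mathcal{R}e(\kappa_j) < 0$ for $j=1,2$. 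The principal technical obstacle is not the abstract reduction but the careful bookkeeping of the distributional term $\delta^\ve_{x_M}$ in $f''$ and $f'''$ contributions; the compactness of $(-\mathcal{\tilde A}_0)^{-1}$ ensures that $\psi_0, \psi_2 \in H^2(0,1)$ so that all pairings with $\phi_j^\ast$ make sense, and the uniform-in-$\ve$ bounds from Theorem~1.1 together with the convergence $u^\ast_\ve \to u^\ast_0$ in $C^1([0,1])$ let one carry out the numerical verification at $\ve=0$, i.e. directly with the Delta distribution, without affecting the sign of $\mathcal{R}e(\kappa_j)$ for small $\ve$.
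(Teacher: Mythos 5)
Your proposal follows essentially the same route as the paper: a two-time-scale weakly nonlinear expansion about $(u^\ast_\ve, D^c_{j,\ve})$, solvability via the Fredholm alternative against the adjoint eigenfunction (which indeed just swaps the off-diagonal entries $\alpha_m f'(p^\ast_\ve)\delta^\ve_{x_M}$ and $\alpha_p g$), an amplitude equation $\partial_T A = a_{j,\ve}\nu A + b_{j,\ve}A|A|^2$, and a numerical evaluation of the cubic coefficient at $\ve=0$ (justified by the strong convergence of $p^\ast_\ve$, the eigenfunctions and the second-order correctors) showing $\mathcal{R}e(b_{j,0})<0$ at both critical values. The paper additionally cross-checks the coefficients via the Haragus--Iooss normal-form reduction, but your derivation, including the nonresonance conditions $\pm n\lambda^c_{j,\ve}\notin\sigma(\mathcal A)$ needed to invert $\mathcal A$ and $\mathcal A - 2i\omega_j I$ for the correctors, matches the paper's argument.
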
 

\begin{proof}
We consider a perturbation analysis in the neighbourhood of the critical parameter value $D = D^c_{j,\ve}+ \delta^2 \nu+ \cdots $, where $\nu= \pm 1$, and the corresponding  small perturbation  of the critical eigenvalues $\lambda_{j,\ve}(D)=  \lambda^c_{j,\ve} + \dfrac{\partial{\lambda_{j,\ve}}}{\partial D} \delta^2 \nu +  \cdots$, where $\delta > 0$ is a small parameter and $j=1,2$. 
As  solutions of  \eqref{dynamic_problem} near the bifurcation points are of the form $e^{\lambda_{j,\ve} t} \xi(x) + c.c.+ u^\ast_\ve(x, D) \approx e^{\lambda^c_{j,\ve} t + \frac{\partial{\lambda_{j,\ve}}}{\partial D} \nu \delta^2 t } \xi(x) + c.c. + u^\ast_\ve(x,D^c_{j,\ve}+ \delta^2 \nu + \cdots)$,  where $u^\ast_\ve=(m^\ast_\ve , p^\ast_\ve)$ is the stationary solution,  $\xi$ is the corresponding eigenvector and  $c.c.$ stands  for the complex conjugate terms, we obtain that the amplitude depends on two time scales - the fast time scale $t$ and the slow time scale $T= \delta^2 t$.  For small $\delta>0$ we shall regard $t$ and $T$ as being independent. 
 Thus we consider the solution of the nonlinear system  \eqref{dynamic_problem}  near the steady state   in the form 
\begin{eqnarray}\label{Ansatz}
\begin{aligned}
m(t,T, x) =  m^\ast_\ve(x, D) + \delta m_1(t,T, x) + \delta^2 m_2(t,T, x) + \delta^3 m_3(t,T,x) + O(\delta^4), \;\;\;\\
p(t,T, x) = p^\ast_\ve(x, D) + \delta p_1(t,T, x) + \delta^2 p_2(t,T,x) + \delta^3 p_3(t,T,x) + O(\delta^4),\quad \; \; \;\;
\end{aligned}
\end{eqnarray}
and $D = D^c_{j,\ve}+ \delta^2 \nu  $, where $\nu= \pm 1$ and $j=1,2$.
We shall use  the Ansatz \eqref{Ansatz} in equations \eqref{dynamic_problem}  and compare the terms of the same order in $\delta$. Using the regularity of $f(p)$ with respect to $p$ and of the stationary solution $u^\ast_\ve$ with respect to $D$, we shall apply Taylor series expansion to $f$  and $u^\ast_\ve$ about  $u^\ast_\ve(x,D^c_{j,\ve})$ and $D^c_{j,\ve}$, with $j=1,2$.
For $\delta$  we have: 
\begin{eqnarray}\label{eq:m1_p1}
\begin{cases}
\partial_t m_1 = D^c_{j,\ve} \partial^2_x m_1 - \mu  m_1 + \alpha_m f^\prime(p^\ast_\ve(x, D^c_{j, \ve}))  \, \delta^\ve_{x_M}(x) \, p_1, \\
\partial_t p_1 = D^c_{j,\ve} \partial^2_x p_1  - \mu p_1 + \alpha_p g(x) m_1 , \\
 \partial_x m_1(t,0)= \partial_x m_1 (t,1)=0, \, \, \partial_x p_1(t,0)
=\partial_x p_1(t,1) =0.
\end{cases}
\end{eqnarray}
The linearity of the equations as well as the fact that the dynamics near the bifurcation point is defined  by the largest eigenvalues  $\pm\lambda^c_{j, \ve}$, imply that we can consider  $m_1$ and $p_1$  in the form: 
\begin{eqnarray*}
m_1(t,T,x)= A(T) e^{\lambda^c_{j, \ve} t} \xi_1(x) + \bar A(T) e^{\bar \lambda^c_{j, \ve} t}\bar  \xi_1(x), \\
p_1(t,T,x)= A(T) e^{\lambda^c_{j, \ve} t} \xi_2(x) + \bar A(T) e^{\bar \lambda^c_{j, \ve} t}\bar  \xi_2(x),
\end{eqnarray*}
where $\xi=(\xi_1,\xi_2)$ is a  solution of the eigenvalue problem \eqref{EW_problem_approx} for  $\lambda^c_{j, \ve}= i\omega^c_{j}$, with $j=1,2$,
and $\bar \xi$ is the complex conjugate of $\xi$  (we shall omit the dependence on $\ve$ to simplify the presentation).  
For $\delta^2$, we obtain equations for $m_2$ and $p_2$:
\begin{eqnarray}\label{eq:m2_p2}
\begin{cases}
\partial_t m_2 = D^c_{j, \ve} \partial^2_x m_2 - \mu  m_2 + \alpha_m\Big[ f^\prime(p^\ast_\ve(x, D^c_{j, \ve}))   p_2+  f^{\prime\prime}(p^\ast_\ve(x, D^c_{j, \ve}))  \dfrac {p_1^2} 2  \Big]  \delta^\ve_{x_M}, \\
\partial_t p_2 = D^c_{j, \ve} \partial^2_x p_2  - \mu p_2 + \alpha_p g(x) m_2 , \\
\partial_x m_2 (t,0)= \partial_x m_2 (t,1)=0, \, \quad \partial_x p_2 (t,0)=\partial_x p_2 (t,1) =0.
\end{cases}
\end{eqnarray}
Then, due to the quadratic term in \eqref{eq:m2_p2} comprising $p_1$, the functions $m_2$ and $p_2$ are of the form:
\begin{eqnarray}\label{m_p_2}
\begin{aligned}
m_2(t,T,x)= A(T)^2 e^{2i\omega^c_j  t} w_1(x)+   c.c.
+ |A(T)|^2 \tilde w_1(x) ,  \\
p_2(t,T,x)= 
A(T)^2 e^{2i\omega^c_j t} w_2(x)  + c.c. 
+ |A(T)|^2 \tilde w_2(x) .
\end{aligned}
\end{eqnarray}
Using \eqref{m_p_2}  in equations \eqref{eq:m2_p2}, we obtain   for the terms with  $e^{2 i \omega^c_j   t}$:  
\begin{eqnarray}\label{w_problem}
\begin{cases}
2i \omega^c_j    w_1 = D^c_{j, \ve} \dfrac{d^2 w_1}{d x^2} - \mu  w_1 + \alpha_m \Big[f^\prime(p^\ast_\ve(x, D^c_{j, \ve})) w_2 + f^{\prime\prime}(p^\ast_\ve(x, D^c_{j, \ve}))  \dfrac{ \xi_2^2 }2 \Big] \delta^\ve_{x_M} , \\
2 i \omega^c_j    w_2 = D^c_{j, \ve} \dfrac{d^2 w_2}{d x^2}  - \mu \, w_2 + \alpha_p\,  g(x) \, w_1 , \\
 \dfrac{d w_1} {d x} (0)=\dfrac{d w_1} {d x} (1)=0, \, \, \quad \dfrac{d w_2}{d x} (0)=\dfrac{d  w_2}{d x} (1) =0\; , 
\end{cases}
\end{eqnarray}
as well as the corresponding complex conjugate problem for the terms with $e^{-2i \omega^c_j t}$, where $j=1,2$. 
Considering the terms for   $e^{0}$, we obtain that $\tilde w= (\tilde w_1, \tilde w_2)$ solves 
\begin{eqnarray}\label{tilde_w_proble}
\begin{cases}
0 = D^c_{j, \ve} \dfrac{d^2 \tilde w_1}{dx^2} - \mu  \tilde  w_1 + \alpha_m f^\prime(p^\ast_\ve(x, D^c_{j, \ve}))  \delta^\ve_{x_M}\tilde w_2+\alpha_m  f^{\prime\prime}(p^\ast_\ve(x, D^c_{j, \ve}))  \delta^\ve_{x_M} |\xi_2|^2 \; , \\
0 = D^c_{j, \ve} \dfrac{d^2 \tilde w_2}{dx^2}  - \mu  \tilde w_2 + \alpha_p \, g(x) \,  \tilde w_1 , \\
 \dfrac {d \tilde w_1}{dx} (0)=\dfrac {d  \tilde w_1}{dx}(1)=0, \, \quad \dfrac {d  \tilde w_2}{dx}(0)=\dfrac {d \tilde w_2}{dx}  (1) =0.
\end{cases}
\end{eqnarray}
Considering the terms of order $\delta^3$, we obtain the following  equations for $m_3$ and $p_3$:
\begin{eqnarray*}
\begin{cases}
\partial_t m_3 + \partial_T m_1&= D^c_{j, \ve} \partial^2_x m_3 - \mu \,  m_3  +  \alpha_m f^\prime(p^\ast_\ve(D^c_{j, \ve})) \delta^\ve_{x_M}  p_3 \\ & + \;\;\;\;\;\; \nu \left[ \partial^2_x m_1 +  \alpha_m \, f^{\prime\prime}(p^\ast_\ve(D^c_{j, \ve}))  \partial_D p^\ast_\ve(D^c_{j, \ve}) \, \delta^\ve_{x_M} p_1\right]\\
&
+ \; \alpha_m f^{\prime\prime}(p^\ast_\ve(D^c_{j, \ve}))   \delta^\ve_{x_M}  p_1 p_2
 + \frac 16 f^{\prime\prime\prime}(p^\ast_\ve(D^c_{j, \ve}))   \delta^\ve_{x_M} p_1^3\;  , \\
\partial_t p_3 + \partial_T p_1&= D^c_{j, \ve}\partial^2_x   p_3 - \mu \, p_3 + \alpha_p \, g(x)\,  m_3
+ \nu  {\partial^2_x p_1}   \; , \\
&\partial_x m_3 (t,0)= \partial_x m_3 (t,1)=0, \, \, \partial_x p_3 (t,0)=\partial_x p_3  (t,1) =0 \; .
\end{cases}
\end{eqnarray*}
Thus we obtain that $m_3(t,T,x)$ and $p_3(t,T,x)$ have the form 
\begin{eqnarray*}
m_3(t,T,x)&=A(T)^3 e^{3 i \omega^c_j  t} q_1(x) 
+ A(T)^2 e^{2 i \omega^c_j  t} s_1(x) 
+   A(T) e^{i \omega^c_j  t}\tilde  \xi_1(x) 
\\ 
& +
A(T) |A(T)|^2  e^{ i \omega^c_j  t} r_1(x) 
+ c.c. + |A(T)|^2 \tilde u_1(x)\; , \\
p_3(t,T,x) &= A(T)^3 e^{3 i\omega^c_j t} q_2(x) 
 + A(T)^2 e^{2i \omega^c_j   t} s_2(x) 
+  A(T) e^{i \omega^c_j  t} \tilde \xi_2(x) 
\\& +
A(T) |A(T)|^2 e^{i \omega^c_j  t} r_2(x) 
+ c.c. + |A(T)|^2 \tilde u_2(x)
  \; .
\end{eqnarray*}
Combining the terms in front of  $e^{i \omega^c_j   t}$,  we obtain equations: 
\begin{eqnarray}\label{eq:m3_p3}
\begin{cases}
  i\omega^c_j \big[A(T) \tilde \xi_1 +A(T) |A(T)|^2 r_1\big] = \big(D^c_{j, \ve}\dfrac{d^2}{dx^2}  - \mu\big) \big[A(T)\tilde \xi_1 + A(T) |A(T)|^2 \,  r_1\big] \\
 \hspace{ 5 cm }  +  \alpha_m f^{\prime } (p^\ast_\ve) \delta^\ve_{x_M} \big[A(T) \, \tilde \xi_2  + A(T) |A(T)|^2  r_2 \big]\\
  \hspace{ 2. cm } 
-\partial_T A(T) \xi_1+  A(T)\nu \Big[ \frac{d^2 \xi_1}{dx^2}  
+ \alpha_m f^{\prime \prime }  (p^\ast_\ve) \delta^\ve_{x_M}   \partial_D p^\ast_\ve\xi_2 \Big] \\
 \hspace{ 2.1 cm } 
+  A(T) |A(T)|^2\alpha_m \delta^\ve_{x_M}\Big[ f^{\prime \prime }  (p^\ast_\ve)  \big(w_2 \bar \xi_2 + \tilde w_2 \xi_2\big)
+  \frac 12 f^{\prime \prime \prime}  (p^\ast_\ve)  \xi^2_2   \bar \xi_2 \Big],
\\
 i\omega^c_j \big[  A(T)\tilde \xi_2 + A(T) |A(T)|^2 r_2\big] = 
 (D^c_{j,\ve} \dfrac{d^2}{dx^2}  - \mu) \big[A(T)\tilde \xi_2 + A(T) |A(T)|^2  r_2\big]  \\
 \hspace{ 1.9 cm }-  \partial_T A(T) \xi_2 +  A(T) \nu \frac{d^2 \xi_2}{dx^2}    +
 \alpha_p\,  g(x)  \big[A(T)\tilde \xi_1 + A(T) |A(T)|^2  r_1 \big] \; .
 \end{cases}
 \end{eqnarray}
Similar equations are obtained for $e^{- i\omega^c_j  t}$ with corresponding complex conjugate terms.
  Since $i\omega^c_j$ is an eigenvalue of $\mathcal A$,   by the Fredholm alternative,
the system \eqref{eq:m3_p3}  together with zero-flux boundary conditions has a solution 
if and only if
 \begin{eqnarray*}
&& \partial_T A(T)\big(\langle \xi_1, \xi_1^\ast\rangle  +  \langle \xi_2, \xi_2^\ast\rangle \big) \\
&&
- \nu A(T) \Big[ \langle\frac{d^2 \xi_1}{dx^2}, \xi^\ast_1 \rangle + \alpha_m
\langle f^{\prime \prime }  (p^\ast_\ve)   \partial_D p^\ast_\ve \delta^\ve_{x_M} \,  \xi_2  , \xi_1^\ast \rangle  +  \langle\frac{d^2 \xi_2}{dx^2} , \xi^\ast_2 \rangle  \Big] \\
&& - A(T) |A(T)|^2   \alpha_m
\Big[ \langle f^{\prime \prime }  (p^\ast_\ve) \delta^\ve_{x_M}  (w_2 \bar \xi_2 +  \tilde w_2 \xi_2), \xi_1^\ast \rangle  
+
\frac 12 \langle f^{\prime \prime \prime}  (p^\ast_\ve) \delta^\ve_{x_M} \xi_2|\xi_2|^2, 
\xi_1^\ast \rangle \Big]=0,
\end{eqnarray*}
where $\xi^\ast$ is the eigenvector for $\lambda =-i\omega^c_j$
 of the formal adjoint operator  $\mathcal A^\ast$:
\begin{equation}\label{EW_problem_omega_adjont}
\begin{cases}
-i\omega^c_j \xi_1^\ast = D^c_{j,\ve} \dfrac{d^2}{dx^2}\xi_1^\ast - \mu  \, \xi_1^\ast+ \alpha_p \, g(x) \, \xi_2^\ast \; , \\
- i\omega^c_j \xi_2^\ast =  D^c_{j,\ve} \dfrac{d^2}{dx^2} \xi_2^\ast - \mu \xi_2^\ast +
 \alpha_m f^\prime( p^\ast_\ve(x, D^c_{j,\ve})) \, \delta^\ve_{x_M} \, \xi_1^\ast \; 
, \\
 \dfrac d {dx} \xi^\ast_1(0)=\dfrac d{dx} \xi^\ast_1(1)=0, \, \quad \dfrac d{dx} \xi^\ast_2(0)=\dfrac d{dx} \xi^\ast_2 (1) =0 \; .
\end{cases} 
\end{equation}
By choosing $\xi^\ast$ in such a way that
$
 \langle \xi, \xi^\ast\rangle= \langle \xi_1, \xi_1^\ast\rangle + \langle \xi_2, \xi_2^\ast\rangle=1,
 $
 we obtain the equation for the amplitude  
  \begin{eqnarray*}
&& \partial_T A(T) =  a_{j, \ve} \nu A(T) + b_{j, \ve} A(T) |A(T)|^2, 
\end{eqnarray*}
where 
\begin{eqnarray*}
a_{j, \ve}&=&  \langle\frac{d^2}{dx^2} \xi_1, \xi^\ast_1 \rangle + \alpha_m
\langle f^{\prime \prime }  (p^\ast_\ve(x, D^c_{j,\ve}))   \partial_D p^\ast_\ve(x,D^c_{j,\ve}) \delta^\ve_{x_M} \,  \xi_2 , \xi_1^\ast \rangle  +  \langle\frac{d^2}{dx^2} \xi_2, \xi^\ast_2 \rangle  , \\
 b_{j,\ve}&=&  \alpha_m
 \langle f^{\prime \prime }  (p^\ast_\ve(x, D^c_{j,\ve})) \delta^\ve_{x_M} (w_2 \bar \xi_2 +  \tilde w_2 \xi_2)
 + \frac 12 f^{\prime \prime \prime}  (p^\ast_\ve(x, D^c_{j,\ve}))\delta^\ve_{x_M}\, \xi_2 |\xi_2|^2 , 
\xi_1^\ast \rangle   \; .
 \end{eqnarray*}
We can calculate the values of $b_{j, \ve}$ for $\ve=0$  which then, using the continuity with respect to $\ve$ and convergence of $b_{j,\ve}$ to $b_{j,0}$ as $\ve \to 0$, ensured by the strong convergence in  $C([0,1])$
of $p^\ast_\ve$, $w_2=w_2^\ve$, $\tilde w_2=\tilde w^\ve_2$, $\xi_2=\xi_2^\ve$ and $\xi_1^\ast=\xi_1^{\ast, \ve}$ as $\ve \to 0$, will provide the information on the type of the Hopf bifurcation and the stability of periodic orbits  for the original model \eqref{dynamic_problem} with small $\ve>0$. Since  eigenfunctions are defined {\it modulo} a constant, we can choose $\xi_2^0(x_M)=1$ and  obtain 
\begin{eqnarray*}
b_{j,0}
&=& \alpha_m \Big[f^{\prime \prime} (p^\ast(x_M, D^c_j)) 
 \big(w_2^0(x_M) + \tilde w_2^0(x_M)  \big)+
\frac{1} 2 f^{\prime \prime \prime} (p^\ast(x_M, D^c_j))\Big]  \overline{ \xi^{\ast, 0}_1}(x_M) \; , 
\end{eqnarray*}
where $j=1,2$,    $\xi^{\ast,0} = (\xi^{\ast,0}_1, \xi^{\ast, 0}_2)$ is  a solution  of the formal adjoint eigenvalue problem with  $\ve=0$,  and  $w^0=(w_1^0, w_2^0)$ and  $\tilde w^0=(\tilde w_1^0, \tilde w_2^0)$ are solutions of  \eqref{w_problem}
 and  \eqref{tilde_w_proble} for $\ve =0$, respectively.
Since $2\lambda^c_j \notin \sigma(\mathcal{\tilde A})$, for $j=1,2$,   and  $0 \notin \sigma(\mathcal{\tilde A})$, with $\tilde A$ defined in \eqref{A_tilde},  there exist unique solutions of the problems   \eqref{w_problem} and  \eqref{tilde_w_proble} for $\ve = 0$. Using  $2\lambda_j^c \notin \sigma(\mathcal{\tilde A}_0)$ and   $\xi^0_2(x_M)=1$  we can compute
\begin{eqnarray*}
&& w_2^0(x_M) = \frac {\alpha_p \alpha_m} 2  f^{\prime\prime}( p^\ast_0(x_M, D^c_j))  
 \mathcal G_1(x_M) \times 
  \Big( 1-   \alpha_p \alpha_m  f^\prime( p^\ast_0(x_M, D^c_j))  \mathcal G_1(x_M)  \Big)^{-1}  , 
\end{eqnarray*}
where 
\begin{eqnarray*}
 \mathcal G_1(x_M) = \frac { \cosh^2(\theta_{2\lambda^c_j} x_M)}  { 4(\mu+2\lambda^c_j) D^c_j \sinh^2(\theta_{2\lambda^c_j})}   \left[1 + \frac 1{ \theta_{2\lambda^c_j} }\sinh(\theta_{2\lambda^c_j}) \right], \, \,  \, \theta_{2\lambda^c_j}= \left(\frac{\mu +2\lambda^c_j}{D^c_j}\right)^{\frac12}.
 \end{eqnarray*}
For $\tilde w^0_2(x)$, since $0 \notin \sigma(\mathcal A_0)$ and using $\xi_2^0(x_M) =1$,  we have 
\begin{eqnarray*}
\tilde  w^0_2(x_M) = \alpha_p \alpha_m 
 f^{\prime\prime}( p^\ast(x_M, D^c_j))  \mathcal G_2(x_M)
 \Big [1-  \alpha_p \alpha_m f^\prime( p^\ast(x_M, D^c_j)) \mathcal G_2(x_M)\Big]^{-1} ,
\end{eqnarray*}
where
\begin{eqnarray*}
&& \mathcal G_2(x_M) =
 \frac { \cosh^2(\theta x_M)}  {4 \mu \,  D_j^c \sinh^2(\theta)} 
  \left[1 + \frac 1{ \theta }\sinh(\theta) \right] \;  \quad \text{ with } \, \theta=(\mu/D_j^c)^{1/2}.
\end{eqnarray*}
Using the fact that $\xi^0_2(x_M) =1$ we can compute    
\begin{eqnarray*}
 \xi^0_1(x) &=&\frac {\alpha_m  f^\prime( p^\ast(x_M, D_j^c))}{ ((\mu+\lambda_j^c) D)^{1/2} \sinh(\theta_{\lambda_j^c})}
\Big[ \cosh(\theta_{\lambda_j^c} x) \cosh(\theta_{\lambda_j^c}(1-x_M))_{0<x<x_M}  \\
&&+   \cosh (\theta_{\lambda_j^c}(1-x))\cosh( \theta_{\lambda_j^c} x_M)_{x_M<x<1} \Big].
 \end{eqnarray*}
To define the solution of \eqref{EW_problem_omega_adjont} with $\ve= 0$ we note that   $-\lambda_j^c\notin\sigma(\mathcal A_0)$ and  obtain 
\begin{eqnarray*}
\xi_2^{\ast, 0}(x)  =  \alpha_m G_{-\lambda_j^c+\mu}(x, x_M)  f^\prime( p^\ast_0) \xi_1^\ast(x_M) =   
 \frac {\alpha_m  f^\prime( p^\ast_0(x_M, D^c_j)) \xi_1^{\ast,0}(x_M)}{ ((\mu-\lambda^c_j) D_j^c)^{1/2} \sinh(\theta_{-\lambda^c_j})}\times \\
  \left[\cosh(\theta_{-\lambda^c_j} x) \cosh(\theta_{-\lambda_j^c}(1-x_M))_{x<x_M}  
+   \cosh (\theta_{-\lambda^c_j}(1-x))\cosh( \theta_{-\lambda^c_j} x_M)_{x_M<x} \right],
\end{eqnarray*}
With $l = 1/2$  we have that $\xi_1^{\ast,0}$ has the form  
\begin{eqnarray*}
 \xi_1^{\ast,0}(x) =
    \frac{\alpha_p  \alpha_m} 2  
     \frac{ f^\prime( p^\ast_0(x_M, D^c_j))\cosh(\theta_{-\lambda_j^c} x_M) }{(\mu-\lambda^c_j) D \sinh^2(\theta_{-\lambda^c_j})}   \xi_1^{\ast,0}(x_M)  
 \times \hspace{2 cm } \\
 \times \left[
  \cosh(\theta_{-\lambda^c_j}(1-x))\left( \cosh(\theta_{-\lambda^c_j}) \left[x- \frac 12\right]_{+} 
  - \frac {\sinh(\theta_{-\lambda^c_j}(1- 2 x))_{x\geq \frac12}} {2\theta_{-\lambda^c_j}}\right)  \right.
   \\
\left. +   \cosh(\theta_{-\lambda^c_j} x)
\left(\left[1- \max\left\{x, \frac 12\right\}\right] +\frac{\sinh\left(2\theta_{-\lambda^c_j}\left[1- \max\left\{x, \frac 12\right\}\right]\right)} {2\theta_{-\lambda^c_j}}\right)
 \right]
\end{eqnarray*}
 where  $\theta_{-\lambda^c_j} = \left(\frac{\mu - \lambda^c_j}{D_c} \right)^{1/2}$.  We define $\xi_1^{\ast,0} (x_M)  $ in such a way that 
$$\langle \xi^0, \xi^{\ast,0}\rangle= \int_0^1\left(\xi_1^0(x)\overline{\xi_1^{\ast,0}}(x) +\xi_2^0(x)\overline{\xi_2^{\ast,0}}(x) \right) dx  =1$$
and  considering  that $x_M < 1/2$, we obtain
\begin{eqnarray*}
&&  \overline {\xi_1^{\ast,0}}(x_M) = \left[\frac {\alpha_m^2  \alpha_p [ f^\prime( p^\ast_0)]^2 \cosh(\theta_{\lambda^c_j} x_M) }{ ((\mu+\lambda^c_j) D)^{3/2} \sinh^3(\theta_{\lambda^c_j})} \right]^{-1}   \left[ \left(\frac12 +\frac{\sinh(\theta_{\lambda^c_j})}{2\theta_{\lambda^c_j}} \right) \left[  \cosh( \theta_{\lambda^c_j} x_M) \times \right.  \right. 
\\ && \left.  \int_{x_M}^{1/2} 
  \cosh (\theta_{\lambda^c_j}(1-x))  \cosh(\theta_{\lambda^c_j} x)
 dx + \cosh(\theta_{\lambda^c_j}(1-x_M))  \int_0^{x_M}  
   \cosh^2(\theta_{\lambda^c_j} x) dx \right]\\
   &&
+   \cosh(\theta_{\lambda^c_j} x_M)
\int_{1/2}^1  \cosh (\theta_{\lambda^c_j}(1-x)) 
 \left[  \cosh(\theta_{\lambda^c_j} x)
\left(1- x +\frac {\sinh(2\theta_{\lambda^c_j}(1- x)) }{2\theta_{\lambda^c_j}} \right) \right. \\
&& \left. \left.  + 
  \cosh(\theta_{\lambda^c_j}(1-x))\left( \cosh(\theta_{\lambda^c_j}) \left(x-\frac 12\right)  
 - \frac {\sinh(\theta_{\lambda^c_j}(1- 2 x))}{2\theta_{\lambda^c_j}}  \right) 
    \right] dx \right]^{-1}\; ,
\end{eqnarray*}
where $\theta_{\lambda^c_j} = ((\mu+ \lambda^c_j)/D)^{1/2}$ and   $j=1,2$.

Carrying out all calculations in Matlab, for the critical value of the bifurcation parameter $D^c_1\approx 3.117 \times 10^{-4}$ we obtain 
$b_{1,0}\approx   -0.041 - 0.015i$. Thus since $\mathcal Re(b_{1,0}) <0$ we have by continuity and strong convergence that  the Hopf bifurcation at $D^c_{1,\ve}$ is supercritical and we have a stable family of periodic solutions bifurcating from the steady state into the region $D>D^c_{1, \ve}$ where the stationary solution is unstable, i.e. $\nu =1$.
For the second critical value  $D^c_2\approx 7.885 \times 10^{-3}$, the calculated value is  
$b_{2,0} \approx - 0.0659 - 0.0175 i$ and, since  $\mathcal Re(b_{2,0}) <0$, the Hopf bifurcation  at $D^c_{2,\ve}$ is also supercritical and stable periodic orbits bifurcate into the region $D<D^c_{2, \ve}$ where the stationary solution is unstable, i.e. $\nu =-1$. 
\end{proof}
 
 
The amplitude equation can also be derived using central manifold theory and the corresponding  normal form for the  system of partial differential equations, see Haragus \& Iooss \cite{Haragus}. To apply the known results we shall shift the values of critical parameters and stationary solutions to zero, i.e.  $\tilde D=D- D^c_{j, \ve}$ and   $\tilde m(t,x) = m(t,x)- m^\ast_\ve(x,D)$, $\tilde p(t,x) = p(t,x) - p^\ast_\ve(x,D)$, where 
$m^\ast_\ve(x,D)$ and $p^\ast_\ve(x,D)$ are the stationary solutions of  \eqref{dynamic_problem}. Then   \eqref{dynamic_problem} can be written as: 
\begin{eqnarray}\label{main_problem_zero}
\partial_t u = \mathcal { A}_{D_{j,\ve}^c} u + \tilde F(u,\tilde D), 
\end{eqnarray}
where  $u(t,x)= (\tilde m(t,x), \tilde p(t,x))$ with  
\begin{eqnarray*}
\mathcal { A}_{D_{j,\ve}^c} =
\begin{pmatrix}
 D^c_{j, \ve}\dfrac{\partial^2}{\partial x^2} - \mu  &\qquad   \alpha_m f^\prime(p^\ast_\ve(x,D^c_{j,\ve}))\delta^\ve_{x_M}(x) \\
 \alpha_p \, g(x)  & \qquad   D^c_{j,\ve}\dfrac{\partial^2}{\partial x^2} - \mu
 \end{pmatrix}
\end{eqnarray*}
and
\begin{eqnarray*}
\tilde F(u,\tilde D) = \begin{pmatrix}
 \alpha_m \left[f(\tilde p + p^\ast_\ve(\tilde D))  -  f(p^\ast_\ve(\tilde D)) -  f^\prime(p^\ast_\ve(D^c_{j,\ve}))\, \tilde p \right]\delta^\ve_{x_M}+ \tilde D \partial_x^2 \tilde m\\
 \\
\tilde D \partial_x^2 \tilde p
\end{pmatrix} \;,
\end{eqnarray*}
where $p^\ast_\ve(\tilde D)=p^\ast_\ve(D^c_{j,\ve}+\tilde D)$. 
 By Theorem 3.3 in Haragus \& Iooss \cite{Haragus}, using the results of Theorem \ref{Hopf_existence} and the regularity of $f$ and of the stationary solution $u^\ast_\ve(x,D)=(m^\ast_\ve(x,D),p^\ast_\ve(x,D))$, we conclude that the system \eqref{main_problem_zero} possesses a two-dimensional centre manifold for sufficiently small $\tilde D$.   
 The equations in \eqref{main_problem_zero} reduced to the central manifold  can be transformed by the polynomial change of variables  in  the normal form \cite{Haragus,Hassard}
 \begin{equation}\label{normal_form_1}
 \frac{dA}{dt} =\lambda_{j,\ve}^c \,  A  +  a_{j,\ve}\, \tilde D\, A + b_{j, \ve} \, A \, |A|^2 + O(|A|(|\tilde D| + |A|^2)^2) ,
 \end{equation}
for $j=1,2$. The solutions of \eqref{main_problem_zero} on the centre manifold  are then of the form 
  \begin{equation}\label{central_manifold_u_normal}
 u = A \xi + \overline{A\xi} + \Phi( A, \bar A, \tilde D), \qquad A \in \mathbb C, 
 \end{equation}   
 where $\xi=(\xi_1, \xi_2)$ is an eigenvector for  the eigenvalue $\lambda_{j,\ve}^c$ and    for $\Phi$ a polynomial ansatz can be made:
 $$
 \Phi (A, \bar A, \tilde D) = \sum_{r, s, q} \Phi_{rsq} A^r \bar{A}^s \tilde D^q \; , 
 $$
 with $\Phi_{100} =0$, $\Phi_{010} =0$ and $\Phi_{rsq} = \overline{\Phi}_{srq}$.
 Substituting the form \eqref{central_manifold_u_normal} for $u$ into equations \eqref{main_problem_zero}, we obtain 
 $$
 (\xi + \partial_A \Phi ) \frac{ dA}{ dt} +
   (\overline\xi + \partial_{\bar A} \Phi ) \frac{ d\bar{A}}{ dt} =  \mathcal A_{D_{j,\ve}^c} (A \xi + \overline{A\xi} + \Phi) +  \tilde F(A\xi +  \overline{A\xi} + \Phi , \tilde D)\; .
 $$
Considering orders of $\tilde D A$, $A^2$, $A\bar A$, $A^2 \bar A$, implies the equations:
\begin{eqnarray*} 
 - \mathcal{ A}_{D_{j, \ve}^c}  \Phi_{001} & = &   \partial_{\tilde D} \tilde F(0, 0) , \\
 a_{j,\ve} \xi  + ( \lambda^c_{j,\ve} - \mathcal{A}_{D_{j, \ve}^c} ) \Phi_{101}& =&  \partial_{u} \partial_{\tilde D} \tilde F(0, 0) \xi+ 
  \partial_u^2 \tilde F(0, 0) (\xi, \Phi_{001}), \\
(2\lambda^c_{j, \ve} - \mathcal{ A}_{D_{j, \ve}^c} ) \Phi_{200} &=& \frac 12 \partial_u^2 \tilde F(0, 0) (\xi, \xi), \\
 - \mathcal{ A}_{D_{j, \ve}^c}  \Phi_{110} & = &   \partial_u^2 \tilde F(0, 0) (\xi, \bar \xi), \\
 b_{j,\ve} \xi + (\lambda^c_{j,\ve}  - \mathcal{A}_{D_{j, \ve}^c} ) \Phi_{210} & = & \partial_u^2 \tilde F(0, 0) (\bar \xi, \Phi_{200})  + 
  \partial_u^2\tilde  F(0, 0) (\xi, \Phi_{110}) \\
  && + \frac 12 \partial_u^3\tilde  F(0, 0) (\xi, \xi, \bar \xi).
\end{eqnarray*}
 We have 
$ \partial_{\tilde D} \tilde F(u^\ast, 0) = (0, 0)^T$ together with 
$$
\partial_u\partial_{\tilde D} \tilde F (0, 0)\,  \xi= 
 \begin{pmatrix}
\dfrac{d^2\xi_1}{dx^2}+  \alpha_m f^{\prime\prime}(p^\ast_\ve(x,D^c_{j, \ve})) \partial_{\tilde D} p^\ast(x,D^c_{j, \ve}) \delta^\ve_{x_M}(x) \xi_2\\
\\
 \dfrac{d^2 \xi_2}{dx^2}
\end{pmatrix}
$$
and multilinear forms $\partial_u^2 \tilde F(0, 0) $ and $\partial_u^3 \tilde F(0, 0)$ are defined as
$$
\partial^2_u \tilde F (0, 0)(\xi, \xi) = 
 \begin{pmatrix}
 \alpha_m f^{\prime\prime}(p^\ast_\ve(x,D^c_{j,\ve})) \delta^\ve_{x_M}(x)\xi_2^2 \\
 \\
 0 
\end{pmatrix}
$$
and 
$$
\partial^3_u\tilde  F (0, 0)(\xi, \xi, \bar \xi) = 
 \begin{pmatrix}
 \alpha_m f^{\prime\prime\prime}(p^\ast_\ve(x,D^c_{j, \ve})) \delta^\ve_{x_M}(x)\xi_2^2 \bar \xi_2\\
 \\
 0 
\end{pmatrix} .
$$
Since $ \partial_{\tilde D} \tilde F(0, 0) = (0, 0)^T$ and $0 \notin \sigma(\mathcal{A}_{D_{j, \ve}^c} )$, we obtain that 
$\Phi_{001} =0$. Applying the Fredholm alternative for the solvability of equations for $ \Phi_{101} $ and  $\Phi_{210}$, we obtain the same expressions for coefficients $a_{j,\ve}$ and $b_{j,\ve}$ as from the weakly nonlinear analysis. 

The relation between the normal form and the equation for the amplitude obtained from the nonlinear analysis can be understand by introducing  in the normal form  \eqref{normal_form_1}  the assumption, using  in the nonlinear analysis,   that  the amplitude depends  on the fast time scale  $t$ and the slow time scale $T$, i.e. $A= A(t,T)$. Taking into  account  $\tilde m(t,x) =m(t,x)-m^\ast_\ve(t,x)\approx\delta $, $\tilde p(t,x) = p(t,x)-p^\ast_\ve(t,x)\approx\delta$, $\tilde D\approx \delta^2\nu$ and $T= t/\delta^2$, implies
\begin{eqnarray*}
\delta \frac {d A}{dt} + \delta^3 \frac{d A}{dT} = 
\lambda^c_{j, \ve}  \delta A + a_{j,\ve} \delta^3 \nu  A+ \delta^3 b_{j,\ve} A|A|^2 + \delta^5 O(|A|(|\nu| + |A|^2)^2).
\end{eqnarray*}
Then for the terms of  orders $\delta$ and $\delta^3$, we obtain the equations derived using weakly-nonlinear analysis, i.e.
\begin{eqnarray*}
\frac {dA}{dt} = \lambda^c_{j,\ve} A \quad \text{ and } \quad 
 \frac{dA}{dT} = a_{j,\ve}\,  \nu \,  A +  b_{j,\ve} \, A|A|^2 \; .
\end{eqnarray*}

\section{Discussion and Conclusions}

Transcription factors play a vital role in controlling the levels of proteins and mRNAs within cells, and are involved in many key processes such as cell-cycle regulation and apoptosis. Such systems are often referred to as gene regulatory networks (GRNs). Those transcription factors which down-regulate (repress/suppress) the rate of gene transcription do so via negative feedback loops, and such intracellular negative feedback systems are known to exhibit oscillations in protein and mRNA levels. 

In this paper we have analysed a mathematical model of the most basic gene regulatory network consisting of a single negative feedback loop between a protein and its mRNA - the Hes1 system. Our model consisted of a system of two coupled nonlinear partial differential equations describing the spatio-temporal dynamics of the concentration of hes1 mRNA, $m(x,t)$, and Hes1 protein, $p(x,t)$, describing the processes of transcription (mRNA production) and translation (protein production). Numerical simulations demonstrated the existence of oscillatory solutions as observed experimentally \cite{Hirata}, with the indication that the periodic orbits arose from supercritical Hopf bifurcations at two critical values of the bifurcation parameter $D_1^c$ and $D_2^c$. 
 These results were then proved rigorously, demonstrating that the diffusion coefficient of the protein/mRNA acts as a bifurcation parameter and showing that the spatial movement of the molecules alone is sufficient to cause the oscillations.

Our result is in line with recent experimental findings \cite{Hanisch,Hoyle} where the longest delay in several transcription factor systems was due to mRNA export from the nucleus rather than delays associated with the process of gene splicing. These results are also in line with other data which suggest that transcripts have a restricted rate of diffusion according to their mRNP (messenger ribonucleoprotein) composition \cite{Grunwald,Mor,Siebrasse}. It is not unreasonable to assume that further delays in the export process could also occur due to docking of transcripts with the pores of the nuclear membrane, and transcript translocation across the nuclear pores into the cytoplasm. These experimental observations and the main result of this paper (molecular diffusion causes oscillations) confirm the importance of modelling transcription factor systems where negative feedback loops are involved, using explicitly spatial models.

\section*{Acknowledgments} 

MAJC and MS gratefully acknowledge the support of the ERC Advanced Investigator Grant 227619, ``M5CGS - From Mutations to Metastases: Multiscale Mathematical Modelling of Cancer Growth and Spread". MS would also like to thank the support from the Mathematical Biosciences Institute at the Ohio State University and NSF grant DMS0931642.

\bibliographystyle{acm}
\bibliography{Stability} 

\begin{thebibliography}{10}

\bibitem{Alberts}
{\sc Alberts, B., Johnson, A., Lewis, J., Raff, M., Roberts, K., and Walter,
  P.}
\newblock {\em Molecular Biology of the Cell}, fifth~ed.
\newblock Garland Science, Taylor and Francis Group Ltd, Oxford, 2008.

\bibitem{Anselone}
{\sc Anselone, N.}
\newblock {\em Collectively compact operators approximation. Theory and
  applications to integral equations.}
\newblock Prentice-Hall, Englewood Cliffs, NJ, 1971.

\bibitem{Bar-Or}
{\sc Bar-Or, R.~L., Maya, R., Segel, L.~A., Alon, U., Levine, A.~J., and Oren,
  M.}
\newblock Generation of oscillations by the p53-{M}dm2 feedback loop: A
  theoretical and experimental study.
\newblock {\em Proc. Natl. Acad. Sci. USA 97\/} (2000), 11250--11255.

\bibitem{Brezis}
{\sc Brezis, H.}
\newblock {\em Functional analysis, Sobolev spaces and Partial Differential
  Equations}.
\newblock Springer, New York, 2011.

\bibitem{Mahaffy1985}
{\sc Busenberg, S., and Mahaffy, J.~M.}
\newblock Interaction of spatial diffusion and delays in models of genetic
  control by repression.
\newblock {\em J. Math. Biol. 22\/} (1985), 313--333.

\bibitem{Crandall}
{\sc Crandall, M., and Rabinowitz, P.}
\newblock The hopf bifurcation theorem in infinite dimensions.
\newblock {\em Arch. Rational Mech. Anal. 67\/} (1977), 53--72.

\bibitem{Dancer}
{\sc Crandall, M., and Rabinowitz, P.}
\newblock On uniqueness and stability for solutions of singularly perturbed
  predator-prey type equations with diffusion.
\newblock {\em J Diff Equations 102\/} (1993), 1--32.

\bibitem{Maini}
{\sc Cruywagen, G., Murray, J., and Maini, P.}
\newblock Biological pattern formation on two-dimensional spatial domains: A
  nonlinear bifurcation analysis.
\newblock {\em SIAM J. Appl. Math. 57\/} (1997), 1485--1509.

\bibitem{Dancer2}
{\sc Dancer, E.}
\newblock On stability and {H}opf bifurcations for chemotaxis systems.
\newblock {\em Methods and Applications of Analysis 8\/} (2001), 245--256.

\bibitem{Eftimie}
{\sc Eftimie, R., de~Vries, G., and Lewis, M.}
\newblock Weakly nonlinear analysis of a hyperbolic model for animal group
  formation.
\newblock {\em J. Math. Biol. 59\/} (2009), 37--74.

\bibitem{Tyson}
{\sc Fall, C.~P., Marland, E.~S., Wagner, J.~M., and Tyson, J.~J.}
\newblock {\em Computational Cell Biology}, fifth~ed.
\newblock Springer, New York, 2002.

\bibitem{Fiedler2}
{\sc Fiedler, B.}
\newblock An index for global hopf bifurcation in parabolic systems.
\newblock {\em J. Reine Angew. Math. 1985\/} (1985), 1--36.

\bibitem{Fiedler}
{\sc Fiedler, B.}
\newblock Global hopf bifurcation of two-parameter flows.
\newblock {\em Arch. Rational. Mech. Anal. 94\/} (1986), 59--81.

\bibitem{Zatorsky2010}
{\sc Geva-Zatorsky, N., Dekel, E., Batchelor, E., Lahav, G., and Alon, U.}
\newblock Fourier analysis and systems identification of the p53 feedback loop.
\newblock {\em Proc. Natl. Acad. Sci. USA 107\/} (2010), 13550--13555.

\bibitem{Zatorsky}
{\sc Geva-Zatorsky, N., Rosenfeld, N., Itzkovitz, S., Milo, R., Sigal, A.,
  Dekel, E., Yarnitzky, T., Liron, Y., Polak, P., Lahav, G., and Alon, U.}
\newblock Oscillations and variability in the p53 system.
\newblock {\em Mol. Syst. Biol. 2\/} (2006), E1--E13.

\bibitem{glasskauffman}
{\sc Glass, L., and Kauffman, S.~A.}
\newblock Co-operative components, spatial localization and oscillatory
  cellular dynamics.
\newblock {\em J. Theor. Biol. 34\/} (1970), 219--237.

\bibitem{Golovaty}
{\sc Golovaty, Y., Marciniak-Czochra, A., and Ptashnyk, M.}
\newblock Stability of non-constant stationary solutions in a
  reaction-diffusion equation coupled to the system of ordinary differential
  equations.
\newblock {\em Comm. Pure Applied Analysis 11\/} (2012), 229--241.

\bibitem{Goodwin}
{\sc Goodwin, B.~C.}
\newblock Oscillatory behaviour in enzymatic control processes.
\newblock {\em Adv. Enzyme Regul. 3\/} (1965), 425--428.

\bibitem{Gordon}
{\sc Gordon, K.~E., Leeuwen, I. M. M.~V., La\'{i}n, S., and Chaplain, M. A.~J.}
\newblock Spatio-temporal modelling of the p53-{M}dm2 oscillatory system.
\newblock {\em Math. Model. Nat. Phenom. 4\/} (2009), 97--116.

\bibitem{Griffithnf}
{\sc Griffith, J.~S.}
\newblock Mathematics of cellular control processes. {I}. negative feedback to
  one gene.
\newblock {\em J. Theor. Biol. 20\/} (1968), 202--208.

\bibitem{Grunwald}
{\sc Gr\"{u}nwald, D., and Singer, R.~H.}
\newblock In vivo imaging of labelled endogenous $\beta$-actin m{RNA} during
  nucleocytoplasmic transport.
\newblock {\em Nature 467\/} (2010), 604--607.

\bibitem{Hanisch}
{\sc Hanisch, A., Holder, M.~V., Choorapoikayil, S., Gajewski, M., \"{O}zbudak,
  E.~M., and Lewis, J.}
\newblock The elongation rate of {RNA} polymerase ii in zebrafish and its
  significance in the somite segmentation clock.
\newblock {\em Development 140\/} (2013), 444--453.

\bibitem{Haragus}
{\sc Haragus, M., and Iooss, G.}
\newblock {\em Local Bifurcations, Center Manifolds and Normal Forms in
  Infinite-Dimensional Dynamical Systems}.
\newblock Springer, Berlin, Heidelberg, New York, 2011.

\bibitem{Hassard}
{\sc Hassard, B., Kazarinoff, N., and Wan, Y.-H.}
\newblock {\em Theory and applications of Hopf bifurcation}.
\newblock Cambridge University Press, 1981.

\bibitem{Henry}
{\sc Henry, D.}
\newblock {\em Geometric Theory of Semilinear parabolic equations}.
\newblock Springer-Verlag, Berlin, Heidelberg, New York, 1981.

\bibitem{Hirata}
{\sc Hirata, H., Yoshiura, S., Ohtsuka, T., Bessho, Y., Harada, T.,
  K.Yoshikawa, and Kageyama, R.}
\newblock Oscillatory expression of the b{HLH} factor {H}es1 regulated by a
  negative feedback loop.
\newblock {\em Science 298\/} (2002), 840--843.

\bibitem{Hopf}
{\sc Hopf, E.}
\newblock Abzweigung einer periodisehen l\"osung von einer station\"aren
  l\"osung eines differentialsystems.
\newblock {\em Ber. d. S\"achs. Akad. d. Wiss. (Math.-Phys. KI). Leipzig 94\/}
  (1942), 1--22.

\bibitem{Hoyle}
{\sc Hoyle, N.~P., and Ish-Horowicz, D.}
\newblock Transcript processing and export kinetics are rate-limiting steps in
  expressing vertebrate segmentation clock genes.
\newblock {\em Proc. Natl. Acad. Sci. USA 110\/} (2013), E4316--E4324.

\bibitem{Ize}
{\sc Ize, J.}
\newblock The global {H}opf bifurcation for nonlinear parabolic equations.
\newblock {\em Comm. In Partial Differential Equations 4\/} (1979), 1299--1387.

\bibitem{Jensen}
{\sc Jensen, M.~H., Sneppen, J., and Tiana, G.}
\newblock Sustained oscillations and time delays in gene expression of protein
  {H}es1.
\newblock {\em FEBS Lett. 541\/} (2003), 176--177.

\bibitem{Kageyama}
{\sc Kagemyama, R., Ohtsuka, T., and Kobayashi, T.}
\newblock The {H}es1 gene family: repressors and oscillators that orchestrate
  embryogenesis.
\newblock {\em Development 134\/} (2007), 1243--1251.

\bibitem{Kato}
{\sc Kato, T.}
\newblock {\em Perturbation Theory for Linear Operators}.
\newblock Springer-Verlag, New York, 1966.

\bibitem{Kielhoefer_paper}
{\sc Kielh\"ofer, H.}
\newblock Hopf bifurcation from a differentiable viewpoint.
\newblock {\em J. Differential Equations 97\/} (1992), 189--232.

\bibitem{Kielhoefer}
{\sc Kielh\"ofer, H.}
\newblock {\em Bifurcation theory. An introduction with applications to partial
  differential equations}.
\newblock Springer-Verlag, New York, 2012.

\bibitem{Kobayashi2010}
{\sc Kobayashi, T., and Kageyama, R.}
\newblock Hes1 regulates embryonic stem cell differentiation by suppressing
  notch signaling.
\newblock {\em Genes to Cells 15\/} (2010), 689--698.

\bibitem{Kobayashi2011}
{\sc Kobayashi, T., and Kageyama, R.}
\newblock Hes1 oscillations contribute to heterogeneous differentiation
  responses in embryonic stem cells.
\newblock {\em Genes 2\/} (2011), 219--228.

\bibitem{Kobayashi2009}
{\sc Kobayashi, T., Mizuno, H., Imayoshi, I., Furusawa, C., Shirahige, K., and
  Kageyama, R.}
\newblock The cyclic gene {H}es1 contributes to diverse differentiation
  responses of embryonic stem cells.
\newblock {\em Genes $\&$ Development 23\/} (2009), 1870--1875.

\bibitem{Kuznetsov}
{\sc Kuznetsov, Y.}
\newblock {\em Elements of Applied Bifurcation Theory}.
\newblock Springer, New York Berlin Heidelberg, 1998.

\bibitem{Lahav}
{\sc Lahav, G., Rosenfeld, N., Sigal, A., Geva-Zatorsky, N., Levine, A.~J.,
  Elowitz, M.~B., and Alon, U.}
\newblock Dynamics of the p53-{M}dm2 feedback loop in individual cells.
\newblock {\em Nature Genet. 36\/} (2004), 147--150.

\bibitem{Lewis}
{\sc Lewis, M., and Murray, J.}
\newblock Analysis of dynamic and stationary pattern formation in the cell
  cortex.
\newblock {\em J. Math. Biol. 31\/} (1992), 25--71.

\bibitem{Lieberman}
{\sc Lieberman, G.}
\newblock {\em Second Order Parabolic Differential Equations}.
\newblock World Scientific, Singapore, 2005.

\bibitem{Mahaffy1988}
{\sc Mahaffy, J.~M.}
\newblock Genetic control models with diffusion and delays.
\newblock {\em Math. Biosci. 90\/} (1988), 519--533.

\bibitem{Mahaffy1984}
{\sc Mahaffy, J.~M., and Pao, C.~V.}
\newblock Models of genetic control by repression with time delays and spatial
  effects.
\newblock {\em J. Math. Biol. 20\/} (1984), 39--57.

\bibitem{Marsden}
{\sc Marsden, J., and McCracken, M.}
\newblock {\em The {H}opf bifurcation and its applications}.
\newblock Springer, New York Heidelberg Berlin, 1976.

\bibitem{Matkowski}
{\sc Matkowski, B.}
\newblock Nonlinear dynamic stability.
\newblock {\em SIAM J. Appl. Math. 18\/} (1970), 872--883.

\bibitem{Matsuda}
{\sc Matsuda, T., Miyawaki, A., and Nagai, T.}
\newblock Direct measurement of protein dynamics inside cells using a
  rationally designed photoconvertible protein.
\newblock {\em Nature Meth. 5\/} (2008), 339--345.

\bibitem{Mendez}
{\sc Mendez, V., Fedotov, S., and Horsthemke, W.}
\newblock {\em Reaction-{T}ransport {S}ystems}.
\newblock Springer, New York, 2010.

\bibitem{Milo}
{\sc Milo, R., Shen-Orr, S., Itzkovitz, S., Kashtan, N., Chklovskii, D., and
  Alon, U.}
\newblock Network motifs: simple building blocks of complex networks.
\newblock {\em Science 298\/} (2002), 824--827.

\bibitem{Monk2008}
{\sc Momiji, H., and Monk, N. A.~M.}
\newblock Dissecting the dynamics of the {H}es1 genetic oscillator.
\newblock {\em J. Theor. Biol. 254\/} (2008), 784--798.

\bibitem{Monk2003}
{\sc Monk, N. A.~M.}
\newblock Oscillatory expression of {H}es1, p53, and {NF}-$\kappa${B} driven by
  transcriptional time delays.
\newblock {\em Curr. Biol. 13\/} (2003), 1409--1413.

\bibitem{Mor}
{\sc Mor, A., Suliman, S., Ben-Yishay, R., Yunger, S., Brody, Y., and Shav-Tal,
  Y.}
\newblock Dynamics of single m{RNP} nucleocytoplasmic transport and export
  through the nuclear pore in living cells.
\newblock {\em Nat. Cell Biol. 12\/} (2010), 543--552.

\bibitem{Nelson}
{\sc Nelson, D.~E., Ihekwaba, A. E.~C., Elliott, M., Johnson, J.~R., Gibney,
  C.~A., Foreman, B.~E., Nelson, G., See, V., Horton, C.~A., Spiller, D.~G.,
  Edwards, S.~W., McDowell, H.~P., Unitt, J.~F., Sullivan, E., Grimley, R.,
  Benson, N., Broomhead, D., Kell, D.~B., and White, M. R.~H.}
\newblock Oscillations in {NF}-$\kappa${B} signaling control the dynamics of
  gene expression.
\newblock {\em Science 306\/} (2004), 704--708.

\bibitem{Ni}
{\sc Ni, W.-M., Takagi, I., and Yanagika, E.}
\newblock Stability of least energy pattern of the shadow system for an
  activator-inhibitor model.
\newblock {\em Japan. J. Indust. Appl. Math 18\/} (2001), 259--272.

\bibitem{Nirenberg}
{\sc Nirenberg, L.}
\newblock {\em Topics in Nonlinear Functional Analysis}.
\newblock Courant Institute of Mathematical Sciences, AMS, New York, 2011.

\bibitem{Pazy}
{\sc Pazy, A.}
\newblock {\em Semigroups of linear operators and applications to partial
  differential equations}.
\newblock Springer-Verlag, New York, 1983.

\bibitem{Rodrigues}
{\sc Rodrigues, S., and Luca, J.~D.}
\newblock Weakly nonlinear analysis of short-wave elliptical instability.
\newblock {\em Physics of Fluid 21\/} (2009), 014108--1--014108--10.

\bibitem{Seksek}
{\sc Seksek, O., Biwersi, J., and Verkman, A.~S.}
\newblock Translational diffusion of macromolecule-sized solutes in cytoplasm
  and nucleus.
\newblock {\em J. Cell. Biol. 138\/} (1997), 131--142.

\bibitem{Shymko}
{\sc Shymko, R.~M., and Glass, L.}
\newblock Spatial switching in chemical reactions with heterogeneous catalysis.
\newblock {\em J. Chem. Phys. 60\/} (1974), 835--841.

\bibitem{Siebrasse}
{\sc Siebrasse, J.~P., Kaminski, T., and Kubitscheck, U.}
\newblock Nuclear export of single native m{RNA} molecules observed by light
  sheet fluorescence microscopy.
\newblock {\em Proc. Natl. Acad. Sci. USA 109\/} (2012), 9426--9431.

\bibitem{Smoller}
{\sc Smoller, J.}
\newblock {\em Shock waves and reaction-diffusion equations}.
\newblock Springer-Verlag, New York, 1994.

\bibitem{Sturrock2011}
{\sc Sturrock, M., Terry, A.~J., Xirodimas, D.~P., Thompson, A.~M., and
  Chaplain, M. A.~J.}
\newblock Spatio-temporal modelling of the {H}es1 and p53-{M}dm2 intracellular
  signalling pathways.
\newblock {\em J. Theor. Biol. 273\/} (2011), 15--31.

\bibitem{Sturrock2012}
{\sc Sturrock, M., Terry, A.~J., Xirodimas, D.~P., Thompson, A.~M., and
  Chaplain, M. A.~J.}
\newblock Influence of the nuclear membrane, active transport and cell shape on
  the {H}es1 and p53--{M}dm2 pathways: {I}nsights from spatio-temporal
  modelling.
\newblock {\em Bull. Math. Biol. 74\/} (2012), 1531--1579.

\bibitem{Tiana}
{\sc Tiana, G., Jensen, M.~H., and Sneppen, K.}
\newblock Time delay as a key to apoptosis induction in the p53 network.
\newblock {\em Eur. Phys. J. B 29\/} (2002), 135--140.

\bibitem{Verhulst}
{\sc Verhulst, F.}
\newblock {\em Nonlinear Differential Equations and Dynamical Systems}.
\newblock Springer, Berlin Heidelberg, 1990.

\bibitem{Ward2}
{\sc Ward, M., and Wei, J.}
\newblock Hopf bifurcation and oscillatory instabilities of spikes solutions
  for the one-dimensional {G}ierer-{M}einhardt model.
\newblock {\em J. Nonlinear Sci. 13\/} (2003), 209--264.

\bibitem{Ward}
{\sc Ward, M., and Wei, J.}
\newblock Hopf bifurcation of spike solutions for the shadow
  {G}ierer-{M}einhardt model.
\newblock {\em Euro. Jnl. of Applied Mathematics 14\/} (2003), 677--711.

\bibitem{Wei}
{\sc Wei, J., and Winter, M.}
\newblock Critical threshold and stability of cluster solutions for large
  reaction-diffusion systems in $\mathbb{R}^1$.
\newblock {\em SIAM J. Math. Anal. 33\/} (2002), 1058--1089.

\bibitem{Wei2}
{\sc Wei, J., and Winter, M.}
\newblock Stability of spiky solutions in a reaction-diffusion system with four
  morphogens on the real line.
\newblock {\em SIAM J. Math. Anal. 42\/} (2010), 2818--2841.

\end{thebibliography}

\end{document}